\algrenewcommand\algorithmicrequire{\textbf{Input:}}
\algrenewcommand\algorithmicensure{\textbf{Output:}}
\DeclareMathOperator*{\argmin}{arg\;min}
\newcommand{\R}{\mathbb{R}}
\renewcommand{\d}{\mathrm{d}}
\newcommand{\bs}{\boldsymbol}
\newcommand{\I}{\bl{I}}
\newcommand{\N}{\mathsf{N}}
\renewcommand{\P}{\mathbb{P}}
\newcommand{\E}{\mathbb{E}}
\newcommand{\bl}{\mathbf}
\newcommand{\tbl}[1]{\widetilde{\mathbf{#1}}}
\newcommand{\DtD}{\bl{D}^\top \bl{D}}
\newcommand{\XtX}{\bl{X}^\top \bl{X}}
\newcommand{\Tr}{\mathsf{Tr}}
\newcommand{\new}{\mathsf{new}}
\newcommand{\indep}{\perp\!\!\!\perp}
\newcommand{\LOOCV}{\mathsf{LOOCV}}
\newcommand{\GCV}{\mathsf{GCV}}
\newcommand{\nGCV}{\mathsf{ROTI}\text{-}\mathsf{GCV}}
\newcommand{\aGCV}{\mathsf{aROTI}\text{-}\mathsf{GCV}}
\newcommand{\bulk}{\mathsf{bulk}}
\newcommand{\op}{\mathsf{op}}
\newcommand{\red}{\mathsf{proj}}
\newcommand{\resid}{\mathsf{resid}}
\renewcommand{\O}{\mathcal{O}}
\newcommand{\1}{\mathds{1}}
\newtheorem{lemma}{Lemma}
\newtheorem{corollary}[lemma]{Corollary}
\newtheorem{theorem}{Theorem}
\theoremstyle{definition}
\newtheorem{definition}{Definition}
\theoremstyle{remark}
\newtheorem{remark}{Remark}
\newcommand{\PreserveBackslash}[1]{\let\temp=\\#1\let\\=\temp}
\newcolumntype{C}[1]{>{\PreserveBackslash\centering}p{#1}}
\newcolumntype{R}[1]{>{\PreserveBackslash\raggedleft}p{#1}}
\newcolumntype{L}[1]{>{\PreserveBackslash\raggedright}p{#1}}
\begin{document}

\twocolumn[

\aistatstitle{ROTI-GCV: Generalized Cross-Validation for Right-Rotationally Invariant Data }

\aistatsauthor{ Kevin Luo \And Yufan Li \And  Pragya Sur }

\aistatsaddress{ Harvard University \And Harvard University \And Harvard University } ]

\begin{abstract}
Two key tasks in high-dimensional regularized regression are tuning the regularization strength for accurate predictions and estimating the out-of-sample risk. It is known that the standard approach --- $k$-fold cross-validation --- is inconsistent in modern high-dimensional settings. While leave-one-out and generalized cross-validation remain consistent in some high-dimensional cases, they become inconsistent when samples are dependent or contain heavy-tailed covariates. As a first step towards modeling structured sample dependence and heavy tails, we use right-rotationally invariant covariate distributions --- a crucial concept from compressed sensing. In the proportional asymptotics regime where the number of features and samples grow comparably, which is known to better reflect the empirical behavior in moderately sized datasets, we introduce a new framework, ROTI-GCV, for reliably performing cross-validation under these challenging conditions. Along the way, we propose new estimators for the signal-to-noise ratio and noise variance. We conduct experiments that demonstrate the accuracy of our approach in a variety of synthetic and semi-synthetic settings.
\end{abstract}

\section{INTRODUCTION}\label{sec:intro}

Regularized estimators are fundamental for modern high-dimensional statistics. In this context, the prototypical problem of ridge regression has gained renewed attention, with multiple works characterizing its out-of-sample risk in high dimensions, c.f.,~\cite{ dobriban2015highdimensional,hastie2022surprises}. Additionally, consistent estimates for this out-of-sample risk, crucial for optimizing the regularization parameter $\lambda$, have been established in the form of leave-one-out cross-validation (LOOCV) and generalized cross-validation (GCV) \citep{xu2021consistent,pmlr-v130-patil21a, atanasov2024scaling}. 

Crucially, all aforementioned analyses assume that samples are independent and identically distributed (i.i.d.) draws from some distribution with sufficient regularity, such as light tails. These assumptions often fail in practice, such as in financial returns, neuroscience, and climate data \citep{GROBYS2021101891,esd-12-1-2021, Tagliazucchi2013}. In such cases, cross-validation techniques can become biased (Theorem~\ref{thm:orig-gcv}, Remark~\ref{rem:gcv-consis}), necessitating alternatives. Only recently, \cite{bigot:hal-04559313} considered independent but non-identically distributed samples and \cite{atanasov2024riskcrossvalidationridge} explored Gaussian designs with both sample and feature dependence.
Understanding  cross-validation in high dimensions under more general sample dependencies and for designs with heavier tails than sub-Gaussians remains a  significant challenge. This paper takes the first step in addressing this issue.

We assess the impact of sample dependence and heavy tails in ridge regression and cross-validation using an alternative random matrix ensemble for the covariate distribution.  Instead of assuming a Gaussian matrix or i.i.d. rows from a fixed distribution, 
we require the singular value decomposition of the design $\bl{X}$ to be \emph{right-rotationally invariant} (see Definition~\ref{def:riri}). We characterize the ridge regression risk for these designs and introduce ROTI-GCV, a new GCV-inspired framework for consistently estimating the out-of-sample risk. Simulations show that ROTI-GCV outperforms traditional GCV and LOOCV in situations with correlated and heavy-tailed observations. 

Right-rotationally invariant designs facilitate tractable analysis while capturing structured forms of sample dependencies and heavy-tailed 
covariates. 
These designs have gained significant attention as alternatives to i.i.d.~designs for theoretical analysis in numerous studies 
\citep{takeda2006analysis,ma2017orthogonal,rangan2019vector,10.1214/21-AOS2101,gerbelot2020asymptotic,takeuchi2019rigorous,liu2022memory,li2023random,fan2021replica,xu2022approximatemessagepassingmultilayer}.
Recent works \citep{dudeja2023spectral, wang2022universality} showed that, under mild conditions, properties of convex estimators across various designs are well-approximated by those under a right-rotationally invariant design with the same spectrum, indicating a broad universality class for these distributions.

Formally, we operate in the proportional asymptotics regime, where the dimension $p$ scales proportionally with the sample size $n$. This regime, rooted in probability theory and statistical physics, has received significant attention due to its ability to accurately capture empirical phenomena 
\citep{hastie2022surprises,sur2019modern,sur2019likelihood,liang2022precise}. Furthermore,  results derived under this regime require minimal assumptions on the signal structure, resulting in theory and methods with broad practical utility \cite{song2024hede}. In regards to cross-validation for i.i.d.~samples, \cite{hastie2022surprises,rad2020scalable, wang2018approximate} established  that classical $k$-fold cross-validation inconsistently estimates the out-of-sample error in this high-dimensional regime, whereas LOOCV and GCV remain consistent, prompting our study of GCV.  However, our work focuses on non-i.i.d.~samples and non-Gaussian tailed designs, areas not sufficiently addressed in prior work.

We provide proofs for ridge regression, but emphasize that our core idea can extend to other penalties. Our proof hinges on characterizing the limit of GCV (Theorem \ref{thm:orig-gcv}). This reveals that the original GCV is inconsistent for the out-of-sample risk in our setting. We next observe that though GCV is inconsistent, its asymptotic limit can serve as an estimating equation for the unknown parameters parameterizing the risk. This estimating equation approach directly inspires a new cross-validation method that works for our challenging right-rotationally invariant designs. A similar strategy may be invoked for any other penalty where an analogue of Theorem \ref{thm:orig-gcv} is available. Therefore, we anticipate our approach to be broadly generalizable beyond ridge regression.

\section{PRELIMINARIES AND SETUP}

\subsection{Right-rotationally invariant designs}\label{ssec:riri-def}

\begin{definition}[Right-rotationally invariant design]\label{def:riri}
A random design matrix $\bl{X} \in \R^{n \times p}$ is \emph{right-rotationally invariant} if for any $\bl{O} \in \mathbb{O}(p)$, one has $\bl{XO} \stackrel{d}{=} \bl{X}$, where $\mathbb{O}(p)$ denotes $p \times p$ orthogonal matrices. Equivalently, $\bl{X}$ is right-rotationally invariant if and only if the singular value decomposition $\bl{X} = \bl{Q}^\top \bl{D} \bl{O}$ satisfies that $\bl{O}$ is independent of $(\bl{Q}, \bl{D})$ and drawn from the Haar measure on $\mathbb{O}(p)$, i.e. ``uniformly" distributed on $\mathbb{O}(p)$. 
\end{definition}

Some examples of right-rotationally invariant designs are as follows (note that an i.i.d.~entries standard Gaussian design is a member of this class, but the class includes many non-i.i.d.~and non-Gaussian designs as well; see Appendix~\ref{app:riri-proof} for proofs of invariance): 

\textit{(i) Autocorrelated data}: The rows of $\bl{X}$ can be drawn from an autocorrelated series, where $\bl{x}_i = \rho \bl{x}_{i - 1} + \sqrt{1 - \rho^2} \bl{z}_i$, with $\bl{z}_i$ being i.i.d. draws from $\N(0, \I_n)$.

\textit{(ii) $t$-distributed data}: The rows of $\bl{X}$ can be drawn from a multivariate $t$ distribution with as few as $3$ degrees of freedom, capturing heavy-tailed data such as financial returns \citep{GROBYS2021101891}.

\textit{(iii) Products of Gaussian matrices}: $\bl{X} = \bl{X}_1 \bl{X}_2 \cdot \cdots \cdot \bl{X}_k$, where $\bl{X}_1$ has $n$ rows and $\bl{X}_k$ has $p$ columns, while the remaining dimensions are arbitrary. See \cite{hanin2020products, hanin2021non} for connections to linear neural networks.

\textit{(iv) Matrix-Normals}: $\bl{X} \sim \mathsf{MN}(\bl{0}, \bs\Sigma^{\mathrm{row}}, \bs\Sigma^{\mathrm{col}})$, where $\bs\Sigma^{\mathrm{row}}$ can be arbitrary while $\bs\Sigma^{\mathrm{col}} \sim \mathsf{InvWishart}(\bl{I}_p, (1 + \delta)p)$, for any $\delta > 0$. See Appendix~\ref{app:notation} for details on distribution notation.  

\textit{(v) Equicorrelated data}: $\bl{X} \in \R^{n \times p}$ has independent columns, but each column follows a multivariate Gaussian distribution with covariance matrix $\bs\Sigma$, where $\Sigma_{ij} = \rho$ if $i \neq j$, and $\Sigma_{ii} = 1$. 

\textit{(vi) Spiked matrices}: $\bl{X} = \lambda \bl{V}\bl{W}^\top + \bl{G}$, where $\bl{V} \in \R^{n \times r}$ and $\bl{W} \in \R^{p \times r}$ are the first $r$ columns of two Haar matrices, and $G_{ij} \sim \N(0, 1)$.

See \cite[Figure~1]{li2023spectrumaware} for more such examples. Setting (ii) has heavier tails than supported by existing GCV theory; settings (i, iii, v, vi) have sample dependence; setting (iv) has both.

Right-rotationally invariant ensembles allow us enormous flexibility---they can simultaneously capture dependent rows and heavy-tails by allowing a rather general class of singular value distributions. 
The spectrum of right-rotationally invariant designs can essentially be arbitrary, and allowing for significant generalization from prior i.i.d.~designs used to study the behavior of LOOCV and GCV in high dimensions. Furthermore, the  universality class of these designs is extremely broad, as established in \cite{dudeja2023spectral,wang2022universality}.

To simplify some statements, we introduce some notation. Let $m_\bl{D}(z) = \frac{1}{p} \sum_{i = 1} ^p \frac{1}{D_{ii}^2 - z}$ and $v_\bl{D}(z) = \frac{1}{n}\sum_{i = 1} ^n \frac{1}{D_{ii}^2 - z}$, with $D_{ii} = 0$ if $i > \min(n, p)$. 
Also of use will be the derivatives $m'_{\bl{D}}$ and $v'_{\bl{D}}$. Note $m_\bl{D}$ is the Stieltjes transform of the empirical measure of $(\DtD_{ii})_{i = 1} ^p$; $v_\bl{D}$ is the Stieltjes transform of $(\mathbf{D}\mathbf{D}^\top_{ii})_{i = 1} ^n$.

\subsection{Problem Setup}

We study high-dimensional ridge regression and GCV with right-rotationally invariant designs. Concretely, 
we observe $(\bl{X}, \bl{y})$, such that
\begin{equation}\label{eq:riri-gen}
    \bl{y} = \bl{X}\bs\beta + \bs\epsilon,
\end{equation}
with $\bl{y} \in \R^n$, $\bl{X} \in \R^{n \times p}$, and $\bs\epsilon, \bs\beta \in \R^{p}$. Hence $n$ is the sample size, $p$ is  both the data dimension and the parameter count, $\bs\beta$ is the signal, and $\bs\epsilon$ is a component-wise independent noise vector, where each entry has mean $0$ and variance $\sigma^2$. 
We study the expected out-of-sample risk on an independent draw from a new, potentially different right-rotationally invariant design, $\tbl{X} = \tbl{Q}^\top \tbl{D} \tbl{O}$, with $\tbl{X} \in \R^{n' \times p}$. This corresponds to learning a model on one interdependent population, and using it on a separate interdependent population. We analyze the conditional risk
\begin{equation}\label{eq:risk-def}
    R_{\bl{X}, \bl{y}}\left(\hat{\bs\beta}(\bl{X}, \bl{y}), \bs\beta\right) = \frac{1}{n'} \E\left[\left.\left\|\tbl{X}\hat{\bs\beta} - \tbl{X}\bs\beta\right\|^2 \right| \bl{X}, \bl{y}\right].
\end{equation}
Such conditional risks have been examined in prior works \citep{hastie2022surprises, pmlr-v130-patil21a}.
We focus on the risk of the ridge estimator
\begin{equation}
    \hat{\bs\beta}_\lambda = \argmin_{\bl{b} \in \R^p} \left\{\|\bl{y} - \bl{X}\bl{b}\|_2^2 + \lambda \|\bl{b}\|^2_2\right\},
\end{equation}
which has closed form $\hat{\bs\beta} = (\XtX + \lambda \I)^{-1} \bl{X}^\top \bl{y}$. As such, sometimes we will write $R_{\bl{X}, \bl{y}}(\lambda) := R_{\bl{X}, \bl{y}}(\hat{\bs\beta}_\lambda, \bs\beta)$ for convenience. Before stating our results, we note the running assumptions used in what follows. 
\begin{enumerate}[label = A\arabic*]
    \item \textcolor{black}{We consider a sequence of right-rotationally invariant designs $\bl{X}_1, \bl{X}_2, \dots, \bl{X}_{n}, \dots$ where each $\bl{X}_i \sim \bl{Q}_i^\top \bl{D}_i \bl{O}_i$, and a sequence of signal vectors $\bs\beta_1, \bs\beta_2, \dots, \bs\beta_n, \dots$ such that for all $n$, $\bs\beta_n \in \R^p$ and $\|\bs\beta_n\|= r\sqrt{n} $; $\bl{y}_i$ is generated by Eq.~\eqref{eq:riri-gen}. In the sequel, we drop the dependence on $n$ and write $\bl{X}, \bl{y}$ and $\bs\beta$ with the indexing implicit. We refer to $(\bl{X}, \bl{y})$ as the training data.} \label{item:scaling-matrix}
    \item $\limsup \lambda_{\max}(\bl{D}_n) < C$ a.s.~for some constant $C$.\label{item:bound-op}
    \item $\bl{X}_n \in \mathbb{R}^{n \times p(n)}$, $n\to\infty$, $p(n) / n \rightarrow \gamma  \in (0, \infty)$. 
    \item $\bs\epsilon$ is independent of $\bl{X}$; $\epsilon_i$'s are independent; $\E[\epsilon_i] = 0$, $\E[\epsilon_i^2] = \sigma^2$, and $\E[\epsilon_i^{4 + \eta}] <\infty$ for some $\eta > 0$.
    \item To fix scaling and simplify our results, we assume that $\frac{n \E[\Tr(\tbl{D}^\top \tbl{D})]}{n' p} = 1$; this ensures the risk is defined and maintains the scale of the training set relative to the test set (See Remark~\ref{rmk:scaling-explanation} of Appendix). \label{item:scaling} 
    \item $\bs\beta_n$ is fixed or random independent of $(\bl{X}_n, \bs\epsilon)$.  \label{item:indep-beta}
    \item $\tbl{X}$ is an independent draw from a potentially different right-rotationally invariant ensemble.  \label{item:indep-test}
\end{enumerate}
Of crucial importance will be the two parameters $r^2$ and $\sigma^2$, which dictate the behavior of the problem.
\begin{remark}\label{rem:extra-noise}
To clarify why the risk in Eq.~\ref{eq:risk-def} is relevant, if one samples $(\tbl{X}, \tbl{y})$ according to Eq.~\eqref{eq:riri-gen}, then $R_{\bl{X}, \bl{y}}(\lambda) = \E[ \|\tbl{y} - \tbl{X}\hat{\bs\beta}_\lambda \|^2 \mid \bl{X}, \bl{y}] - \sigma^2$, meaning the risk defined corresponds, up to a factor of $\sigma^2$, to the mean-squared error on the test set.
\end{remark}

\begin{remark}\label{rmk:scaling-1}
    One should think of $\bl{X}$ in our setting as akin to the normalized matrix $\bl{X}/\sqrt{n}$ in the i.i.d. setting. This further ensures that $\|\XtX\|_{\op} = \|\DtD\|_{\op} = \O(1)$. To maintain the signal to noise ratio under this scaling, $\|\bs\beta\| = \O(\sqrt{n})$ (Assumption~\ref{item:scaling-matrix}); the test set is appropriately scaled through Assumption~\ref{item:scaling}.
\end{remark}

\section{GENERALIZED CROSS-VALIDATION}\label{sec:gcv-vanilla}
Prior work \citep{xu2021consistent, pmlr-v130-patil21a} established that for many  designs with i.i.d. samples, LOOCV and GCV estimate the out-of-sample error consistently in the proportional asymptotics regime, while $k$-fold CV does not. In our setting, LOOCV is intractable without additional assumptions on $\bl{Q}$ (see Appendix~\ref{app:loocv-bad}). However, GCV is tractable, motivating our study here.   Initially introduced as a technique for selecting the regularization parameter in smoothing splines  \cite{craven1978smoothing}, GCV's optimality and consistency has been shown in various tasks \citep{li1986asymptotic, gu2005optimal, zhang2015divide, xu2018optimal, xu2019distributed}. 
The GCV cross-validation metric for ridge regression is as follows \citep{golubgcv}:
\begin{equation}\label{eq:gcv}
    \GCV_n(\lambda) = \frac{1}{n} \sum_{i = 1} ^n \left(\frac{y_i - \bl{x}_i^\top \hat{\bs\beta}_{\lambda}}{1 - \Tr(\bl{S}_{\lambda})/n}\right)^2. 
\end{equation}
Here $\bl{S}_\lambda = \bl{X}^\top (\XtX + \lambda \I)^{-1} \bl{X}^\top$ is the smoother matrix. Our first result is as follows: 
\begin{theorem}\label{thm:orig-gcv}
    Under the stated assumptions,
    \begin{equation}
        \GCV_n(\lambda) - \frac{r^2 (v_{\bl{D}}(-\lambda) - \lambda v'_{\bl{D}}(-\lambda)) + \sigma^2\gamma v'_{\bl{D}}(-\lambda)}{\gamma v_{\bl{D}}(-\lambda)^2} \xrightarrow{a.s.} 0.
    \end{equation}
\end{theorem}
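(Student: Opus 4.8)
The plan is to write $\GCV_n(\lambda) = \GCVnum_n(\lambda)\big/\big(1 - \Tr(\bl{S}_\lambda)/n\big)^2$ with $\GCVnum_n(\lambda) := \tfrac1n\|(\I_n - \bl{S}_\lambda)\bl{y}\|_2^2$ and $\bl{S}_\lambda = \bl{X}(\XtX + \lambda\I)^{-1}\bl{X}^\top$, and to treat the numerator and denominator separately, reducing each to a spectral functional of $\bl{D}$. Two structural facts make this work: the left singular-vector matrix $\bl{Q}$ enters only through $\bl{X}\bl{X}^\top = \bl{Q}^\top\bl{D}\bl{D}^\top\bl{Q}$, so its effect is unitarily invariant and disappears inside traces, while the remaining randomness lives in two quadratic forms --- one in the Haar matrix $\bl{O}$, one in the noise $\bs\epsilon$ --- that concentration controls. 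Throughout I use the push-through identities $\bl{X}(\XtX + \lambda\I)^{-1}\bl{X}^\top = \I_n - \lambda(\bl{X}\bl{X}^\top + \lambda\I_n)^{-1}$ and $\bl{X}^\top(\bl{X}\bl{X}^\top + \lambda\I_n)^{-2}\bl{X} = (\XtX + \lambda\I)^{-2}\XtX$, valid since $\lambda > 0$. The denominator is then immediate and exact: from $\I_n - \bl{S}_\lambda = \lambda(\bl{X}\bl{X}^\top + \lambda\I_n)^{-1}$ and the fact that $\bl{X}\bl{X}^\top$ and $\bl{D}\bl{D}^\top$ have the same eigenvalues, $1 - \Tr(\bl{S}_\lambda)/n = \tfrac{\lambda}{n}\Tr((\bl{X}\bl{X}^\top + \lambda\I_n)^{-1}) = \lambda\, v_{\bl{D}}(-\lambda)$, a function of the spectrum of $\bl{D}$ alone, requiring no concentration.

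For the numerator, $\bl{y} - \bl{X}\hat{\bs\beta}_\lambda = (\I_n - \bl{S}_\lambda)\bl{y} = \lambda(\bl{X}\bl{X}^\top + \lambda\I_n)^{-1}\bl{y}$, so substituting $\bl{y} = \bl{X}\bs\beta + \bs\epsilon$ splits $\GCVnum_n(\lambda)$ into a signal term $\tfrac{\lambda^2}{n}\bs\beta^\top\bl{X}^\top(\bl{X}\bl{X}^\top + \lambda\I)^{-2}\bl{X}\bs\beta$, a noise term $\tfrac{\lambda^2}{n}\bs\epsilon^\top(\bl{X}\bl{X}^\top + \lambda\I)^{-2}\bs\epsilon$, and a cross term. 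Conditioning on $(\bl{X},\bs\beta)$, the cross term is a mean-zero linear form in $\bs\epsilon$ with variance $\O(n)$ (using $\|\bs\beta\| = r\sqrt n$ and $\lambda > 0$ to bound the resolvent), hence $o(1)$ after the $1/n$ normalization, while the noise term concentrates around $\tfrac{\lambda^2\sigma^2}{n}\Tr((\bl{X}\bl{X}^\top + \lambda\I)^{-2}) = \lambda^2\sigma^2\, v'_{\bl{D}}(-\lambda)$ by a Hanson--Wright-type estimate. For the signal term, the second push-through identity together with $\XtX = \bl{O}^\top\bl{D}^\top\bl{D}\,\bl{O}$ gives $\bl{X}^\top(\bl{X}\bl{X}^\top + \lambda\I)^{-2}\bl{X} = \bl{O}^\top(\DtD + \lambda\I)^{-2}\DtD\,\bl{O}$; since $\bl{O}$ is Haar and independent of $(\bl{D},\bs\beta)$ (Definition~\ref{def:riri} and assumption A5) and $\|\bs\beta\| = r\sqrt n$, the vector $\bl{O}\bs\beta$ is uniform on the sphere of radius $r\sqrt n$, so concentration of quadratic forms on the sphere gives
\[
\tfrac{\lambda^2}{n}\bs\beta^\top\bl{O}^\top(\DtD + \lambda\I)^{-2}\DtD\,\bl{O}\bs\beta = \tfrac{\lambda^2 r^2}{p}\Tr\!\big((\DtD + \lambda\I)^{-2}\DtD\big) + o(1) = \lambda^2 r^2\big(m_{\bl{D}}(-\lambda) - \lambda\, m'_{\bl{D}}(-\lambda)\big) + o(1),
\]
using $t/(t+\lambda)^2 = 1/(t+\lambda) - \lambda/(t+\lambda)^2$. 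Collecting the three terms, $\GCVnum_n(\lambda) = \lambda^2 r^2\big(m_{\bl{D}}(-\lambda) - \lambda\, m'_{\bl{D}}(-\lambda)\big) + \lambda^2\sigma^2\, v'_{\bl{D}}(-\lambda) + o(1)$ almost surely.

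To match the stated form I convert $m_{\bl{D}}$ to the companion transform $v_{\bl{D}}$. Since $\DtD$ and $\bl{D}\bl{D}^\top$ have the same nonzero eigenvalues and differ only in their number of zero eigenvalues, $p\big(m_{\bl{D}}(-\lambda) - 1/\lambda\big) = n\big(v_{\bl{D}}(-\lambda) - 1/\lambda\big)$, and differentiating in $\lambda$ gives the analogue for the derivatives; substituting, the $1/\lambda$ terms cancel and $m_{\bl{D}}(-\lambda) - \lambda\, m'_{\bl{D}}(-\lambda) = \tfrac{n}{p}\big(v_{\bl{D}}(-\lambda) - \lambda\, v'_{\bl{D}}(-\lambda)\big)$. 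Dividing the numerator by the denominator $\lambda^2 v_{\bl{D}}(-\lambda)^2$ cancels $\lambda^2$, and then $p/n \to \gamma$ (A2) together with the uniform control of $v_{\bl{D}}(-\lambda)$ and $v'_{\bl{D}}(-\lambda)$ (bounded above since $\lambda > 0$, bounded away from $0$ by A1) lets one replace $n/p$ by $1/\gamma$ up to $o(1)$, yielding $\GCV_n(\lambda) = \big(r^2(v_{\bl{D}}(-\lambda) - \lambda v'_{\bl{D}}(-\lambda)) + \sigma^2\gamma\, v'_{\bl{D}}(-\lambda)\big)\big/\big(\gamma v_{\bl{D}}(-\lambda)^2\big) + o(1)$ almost surely, which is the claim.

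The substance of the argument lies in the two concentration steps, and the more delicate is the noise quadratic form $\bs\epsilon^\top(\bl{X}\bl{X}^\top + \lambda\I)^{-2}\bs\epsilon$: because assumption A3 grants only a moment of order $4 + \eta$, Gaussian concentration is unavailable, so one instead combines a Bai--Silverstein/Hanson--Wright moment bound --- schematically $\E\,\big|\bs\epsilon^\top\bl{A}\bs\epsilon - \sigma^2\Tr\bl{A}\big|^q \lesssim \big(\E|\epsilon_1|^4\,\|\bl{A}\|_F^2\big)^{q/2} + \E|\epsilon_1|^{2q}\,\|\bl{A}\|_F^q$ for deterministic $\bl{A}$, taken at $q = 2 + \eta/2$ so the required moment is precisely of order $2q = 4 + \eta$ --- with a Borel--Cantelli argument to upgrade convergence in probability to the almost sure statement; here $\|\bl{A}\|_F^2 = \O(n)$, so the resulting tail sum $\sum_n n^{-(1 + \eta/4)}$ converges. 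The Haar quadratic form is controlled the same way via spherical concentration of measure, using the uniform bound $\big\|(\DtD + \lambda\I)^{-2}\DtD\big\|_{\op} \le 1/(4\lambda)$; one checks finally that the exceptional null sets for the three numerator pieces and for $p/n \to \gamma$ can be taken simultaneously, so the almost sure limits add.
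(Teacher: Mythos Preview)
Your proposal is correct and follows essentially the same route as the paper: split $\GCV_n$ into numerator and denominator, compute the denominator exactly as $\lambda^2 v_{\bl{D}}(-\lambda)^2$, decompose the numerator into signal, noise, and cross pieces, and control the signal quadratic form via spherical Hanson--Wright on $\bl{O}\bs\beta$ and the noise quadratic form via a Bai--Silverstein moment bound plus Borel--Cantelli (the paper cites \cite[Lemma~C.3]{dobriban2015highdimensional} for this step). The one small variation is the cross term---you condition on $(\bl{X},\bs\beta)$ and use moments of $\bs\epsilon$, whereas the paper conditions on $(\bl{D},\bs\epsilon)$ and exploits the sub-Gaussian tails of the spherical vector $\bl{O}\bs\beta/\|\bs\beta\|$, which is marginally cleaner because it avoids the heavy tails of $\bs\epsilon$; both are valid.
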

Further, the risk takes the following form:
\begin{theorem}\label{thm:ridge}
    Fix $0 < \lambda_1 < \lambda_2 < \infty$. Define
    \begin{multline}\label{Rdef}
        \mathsf{R}_{\bl{D}}(r^2, \sigma^2) = r^2 \left(\frac{\lambda^2}{\gamma} v'_{\bl{D}}(-\lambda) + \frac{\gamma - 1}{\gamma}\right) \\
        + \sigma^2 ( v_{\bl{D}}(-\lambda) - \lambda v_{\bl{D}}'(-\lambda))
    \end{multline}
    Under the stated assumptions,
    \begin{equation}
        \sup_{\lambda \in [\lambda_1, \lambda_2]} \left| R_{\bl{X}, \bl{y}} \left( \hat{\bs\beta}_\lambda, \bs\beta \right) - \mathsf{R}_{\bl{D}}(r^2, \sigma^2) \right| \xrightarrow{a.s.} 0.
    \end{equation}
    where we recall that $r^2=\|\bs\beta _n\|^2/n$ and $\sigma^2=\mathbb{E}{\epsilon_i^2}$.
\end{theorem}
\textcolor{black}{In Theorem~\ref{thm:ridge}, one can interpret the term involving $r^2$ as the bias of the estimator, and the term with $\sigma^2$ as its variance. As such, one can then interpret Theorem~\ref{thm:orig-gcv} as showing how GCV reconstructs the bias and variance from the data.}
\begin{remark}\label{rem:gcv-consis}
\textcolor{black}{It is shown in \cite{pmlr-v130-patil21a} that for i.i.d. designs, $\Delta_n(\lambda) := \GCV_n(\lambda) - R_{\bl{X}, \bl{y}}(\lambda) - \sigma^2$ approaches zero uniformly over compact intervals\footnote{The extra $\sigma^2$ term is discussed in Remark~\ref{rem:extra-noise}.}. Their proof of this relies explicitly on identities of the Stieltjes transform given by the Silverstein equation \citep{SILVERSTEIN1995331}. These identities do not exist for right-rotationally invariant designs, and thus the GCV becomes biased for these designs, meaning $\Delta_n(\lambda) \not \to 0$. The more the spectrum departs from that of the i.i.d. setting, the more biased the estimates.}
\end{remark}

\subsection{A Modified GCV}

The original GCV (in Eq.~\eqref{eq:gcv}) can be seen as starting with the training error and finding a rescaling that recovers the out-of-sample risk. To construct our alternative GCV metric, which is provably consistent for right-rotationally invariant designs, we likewise begin with the training error, but instead use it to produce estimating equations for $r^2$ and $\sigma^2$. 
We then use these to compute the risk formula given by Theorem~\ref{thm:ridge}. 

\subsubsection{Estimators for $r^2, \sigma^2$}
Consistent estimation of $\sigma^2$  has been studied in \cite{li2023spectrumaware}. We propose a different approach since we observe it performs better in finite samples. In particular, we establish the following result: 
\begin{lemma}\label{lem:gcv-as} Under our assumptions, for any $\lambda > 0$,
\begin{multline}\label{eq:gcv-as}
    \frac{1}{n} \sum_{i = 1} ^n \left(y_i - \bl{x}_i^\top \hat{\bs\beta}_\lambda \right)^2 -
    \left[r^2 \frac{\lambda^2}{\gamma} \left(v_{\bl{D}}(-\lambda) - \lambda v'_{\bl{D}}(-\lambda)\right)\right. \\
    \left. \phantom{\frac{\lambda^2}{\gamma}} + \sigma^2\lambda^2 v'_{\bl{D}}(-\lambda)\right] \xrightarrow{a.s.} 0.
\end{multline}
\end{lemma}
Observe that Eq.~\eqref{eq:gcv-as} can be used as an estimating equation, meaning that, if one treats \eqref{eq:gcv-as} as an equality to 0 (i.e. replacing ``$\xrightarrow{a.s.}$'' with ``$=$''), then we can solve for the unknown quantities $r^2$ and $\sigma^2$.
Recall that $\gamma = p/n$, $v_\bl{D}$, and $v'_\bl{D}$, defined in Section~\ref{ssec:riri-def}, are all data dependent quantities. 
Now, Eq.~\eqref{eq:gcv-as} yields a distinct estimating equation for \textit{each} value of $\lambda$. While it takes only two equations to solve for $r^2$ and $\sigma^2$, we instead use a grid of $\lambda$ and find OLS estimates for $r^2$ and $\sigma^2$ to improve robustness. Explicitly, the scheme proceeds as shown in steps 1-6 of Algorithm~\ref{alg:roti-gcv}. 

Using Lemma~\ref{lem:gcv-as}, we can establish that the following:
\begin{corollary}\label{cor:consistent}
    Under Assumptions \ref{item:scaling-matrix}-\ref{item:indep-test} and a non-degeneracy condition on $\bl{D}$ (see Appendix~\ref{app:nondegen} for details), $\hat{r}^2$ and $\hat{\sigma}^2$ from Algorithm \ref{alg:roti-gcv} are strongly consistent for $r^2$ and $\sigma^2$, respectively.
\end{corollary}
Empirically, we find that normalizing the spectrum of $\XtX$ and taking $\{\lambda_i\}_{i=1}^L$ logarithmically spaced between $1$ and $10^{2.5}$ works well. 
\begin{algorithm}[h]    
    \caption{ROTI-GCV($\lambda$)}\label{alg:roti-gcv}
    \begin{algorithmic}[1]
        \Require{$\bl{X}, \bl{y}$,  $\lambda$; for use in estimation, grid of regularization strengths $\lambda_1 < \dots < \lambda_L$;  }
        \Ensure{ Estimate for ${R}_{\bl{X}, \bl{y}}(\hat{\bs\beta}, \bs\beta)$}
        \For{$i = 1, \dots, L$}
            \State $a_i \gets \frac{\lambda^2_i}{\gamma} \left(v_{\bl{D}}(-\lambda_i) - \lambda_i v'_{\bl{D}}(-\lambda_i)\right)$
            \State $b_i \gets \lambda_i^2 v'_{\bl{D}}(-\lambda_i)$
            \State $t_i \gets \|\bl{y} - \bl{X}\hat{\bs\beta}_{\lambda_i}\|_2^2. $
        \EndFor
        \State $\hat{\sigma}^2_{\{\lambda_i\}_{i = 1} ^L}(\bl{X}, \bl{y}) \gets \frac{\sum_{i = 1} ^L \left(\frac{t_i}{a_i} - \frac{t_1}{a_1} \right) \left(\frac{b_i}{a_i} - \frac{b_1}{a_1}\right)}{\sum_{i = 1} ^L\left(\frac{b_i}{a_i} - \frac{b_1}{a_1}\right)^2}$, \qquad $\hat{r}^2_{\{\lambda_i\}_{i = 1} ^L}(\bl{X}, \bl{y}) \gets \frac{\sum_{i = 1} ^L \left(\frac{t_i}{b_i} - \frac{t_1}{b_1} \right) \left(\frac{a_i}{b_i} - \frac{a_1}{b_1}\right)}{\sum_{i = 1} ^L\left(\frac{a_i}{b_i} - \frac{a_1}{b_1}\right)^2}.$
        \State \textbf{return } $\mathsf{R}_{\bl{D}}(\hat{r}^2, \hat{\sigma}^2)$ for $\mathsf{R}_{\mathbf{D}}(\cdot, \cdot)$ defined in \eqref{Rdef}
    \end{algorithmic}
\end{algorithm}

\subsubsection{Plug-in estimation}
We now define $\nGCV(\lambda) := \mathsf{R}_\bl{D}(\hat{r}^2, \hat{\sigma}^2)$ (recall $\mathsf{R}_\bl{D}$ from Theorem~\ref{thm:ridge}) and tune $\lambda$ using this metric (Algorithm~\ref{alg:roti-gcv}); below we establish that this quantity is uniformly consistent for the out-of-sample risk over compact intervals.
\begin{corollary}\label{cor:empirical-unif}
    Fix any $0 < \lambda_1 < \lambda_2 < \infty$. Under Assumptions~\ref{item:scaling-matrix}-\ref{item:indep-test}, 
    one has
    \begin{equation}
        \sup_{\lambda \in  [\lambda_1, \lambda_2]}\left|\nGCV(\lambda) - R_{\bl{X}, \bl{y}}(\hat{\bs\beta}_\lambda, \bs\beta)\right| \xrightarrow{a.s.} 0.
    \end{equation}
\end{corollary}
\begin{remark}
    \textcolor{black}{Uniform convergence, unlike pointwise convergence, ensures the risk attained by the minimizer (over a compact interval) of our cross-validation metric is close to optimal.} See Appendix~\ref{app:unif} for a short proof.
\end{remark}

\section{GCV UNDER SIGNAL-PC ALIGNMENT}\label{sec:gcv-aligned}
An issue that emerges when attempting to apply ROTI-GCV in practice is that the signal often tends to align with the top eigenvectors of the data, violating the independence assumption imposed on $\bs\beta$ and $\bl{O}$. The right-rotationally invariant assumption, for fixed $\bs\beta$, inherently assumes that the signal is incoherent with respect to the eigenbasis. However, this assumption is generally not true, and in fact in the i.i.d. anisotropic case, the geometry of $\bs\beta$ with respect to the top eigenvectors of $\bs\Sigma$ is known to significantly influence the behavior of ridge regression (see e.g. \cite{wu2020optimal}).

Adapting the approach given in \cite{li2023spectrumaware}, 
we model this scenario using two index sets $\mathcal{J}_a$ and $\mathcal{J}_c$ of distinguished eigenvectors, which are \textit{aligned} and \textit{coupled} eigenvectors, respectively. 
We replace Assumptions~\ref{item:indep-beta}~and~\ref{item:indep-test} in Section~\ref{sec:gcv-vanilla} with the following:
\begin{enumerate}[label = A\arabic*, resume]
    \item The true signal $\bs\beta$ is given as $\bs\beta = \bs\beta' + \sum_{i \in \mathcal{J}_a}\sqrt{n}\alpha_i\bl{o}_i$, where $\|\bs\beta'\|/\sqrt{n} = r$; $\bs\beta'$ is independent of $(\bl{X}, \bs\epsilon)$. Here $\bl{o}_i$ refers to the $i$th row of $\bl{O}$, which is an eigenvector of $\XtX$. An eigenvector $\bl{o}_i$ is an \textit{aligned} eigenvector if $\bl{o}_i \in \mathcal{J}_a$, as then $\bl{o}_i$ aligns with the signal $\bs\beta$; \label{item:aligned} 
    \item The test data $\tbl{X} \sim \tbl{Q}^\top \tbl{D} \tbl{O}$ is drawn from a separate right-rotationally invariant ensemble; $\tbl{O}$ is distributed according to a Haar matrix with rows conditioned to satisfy $\bl{o}_i = \tbl{o}_i$ for all $i \in \mathcal{J}_c$. An eigenvector $\bl{o}_i$ is then \emph{coupled} if $i \in \mathcal{J}_c$, as $\bl{o}_i$ is shared between the test and train sets. \label{item:coupled}
\end{enumerate}
We require two more assumptions:
\begin{enumerate}[label = A\arabic*, resume]
    \item  $\mathcal{J}_a, \mathcal{J}_c$, $\bs\alpha = (\alpha_i)_{i \in \mathcal{J}_a}$ are fixed, not changing with $n, p$; this ensures consistent estimation of these distinguished directions.
    \item $\limsup \|\E[\tbl{D}^\top \tbl{D}]\|_\op < C$ for some constant $C$. This differs from Assumption~\ref{item:bound-op}, as it is on the test data. 
    \label{item:j-bounded-spec} 
\end{enumerate}

The main difficulty that arises in our setting, compared to \cite{li2023spectrumaware}, is that we must account for how the geometry of the eigenvectors of the test set relates to those of the training set. One should expect, for example, in some types of structured data that the top eigenvectors of the training data and test data are closely aligned. We account for this through the coupled eigenvectors condition (\ref{item:coupled}). Furthermore, without this coupling, any alignment between the eigenvectors of the training set and the signal $\bs\beta$ will not exist in the test set, since then the eigenvectors of the test set would be independent of this alignment. We are still interested in the same test risk $R_{\bl{X}, \bl{y}}$ as in Eq.~\eqref{eq:risk-def}, except now note that $\tbl{X}$ and $\bl{X}$ are dependent.

\begin{theorem}\label{thm:align-gcv}
    Under Assumptions~\ref{item:scaling-matrix}-\ref{item:scaling} and \ref{item:aligned}-\ref{item:j-bounded-spec}, for any $0 < \lambda_1 < \lambda_2 < \infty$, we have
    \begin{equation}
        \sup_{\lambda \in [\lambda_1, \lambda_2]} \left| R_{\bl{X}, \bl{y}} \left( \hat{\bs\beta}_\lambda, \bs\beta \right) - \mathcal{R}_{\bl{X}, \bl{y}}(r^2, \sigma^2, \bs\alpha) \right| \xrightarrow{a.s.} 0,
    \end{equation}
    where
    \begin{equation}
        \mathcal{R}_{\bl{X}, \bl{y}}(r^2, \sigma^2, \bs\alpha) = \mathcal{B}_{\bl X}(\hat{\bs\beta}_\lambda, \bs\beta) + \mathcal{V}_{\bl X}(\hat{\bs\beta}_\lambda, \bs\beta).
    \end{equation}
    Let $\mathfrak{d}_i^2 = \E\left[\left(\tbl{D}^\top \tbl{D}\right)_{ii}\right]$~and~$\mathfrak{d}_\bulk^2 = \frac{1}{p - |\mathcal{J}_c|} \sum_{i \not \in \mathcal{J}_c} \mathfrak{d}_i^2$. 
    \begin{align*}
        \mathcal{B}_\bl{X}(\hat{\bs\beta}_\lambda, \bs\beta) &= \frac{\lambda^2}{n} \sum_{i = 1} ^p \left[\frac{\left(r^2/\gamma  + \1(i \in \mathcal{J}_a)n\alpha_i^2\right)}{(D_{ii}^2 + \lambda)^2} \right. \\
        &\qquad \cdot \left. \phantom{\frac{a^2}{b}}\left(\mathfrak{d}_\bulk^2 + \1(i \in \mathcal{J}_c)(\mathfrak{d}_i^2 - \mathfrak{d}^2_\bulk)\right)\right] \\
        \mathcal{V}_\bl{X}(\hat{\bs\beta}_\lambda, \bs\beta) &= \frac{\sigma^2}{n} \sum_{i = 1} ^n \frac{D_{ii}^2\left(\mathfrak{d}_\bulk^2 + \1(i \in \mathcal{J}_c)(\mathfrak{d}_i^2 - \mathfrak{d}^2_\bulk)\right)}{(D_{ii}^2 + \lambda)^2} 
    \end{align*}
    \begin{remark}
    \textcolor{black}{Comparing the two terms here with those in Theorem~\ref{thm:orig-gcv}, observe that they are equal when $\mathcal{J}_a$ and $\mathcal{J}_c$ are empty. Furthermore, observe that each element of $\mathcal{J}_a$ adds a large contribution to the bias, scaling with the size of the alignment $\alpha_i$; similarly, elements of $\mathcal{J}_c$ contribute additional bias and variance. }
    \end{remark}
    \begin{remark}
        The assumption that the top eigenvectors of training and test sample covariance are exactly equal (Assumption~\ref{item:coupled}) is not expected to hold perfectly in practice. However, the idea is that the approximation can lead to more robust procedures. We thus expect this approximation to hold when the training and test data have ``spiky'' spectra, where the top few eigenvalues are heavily separated from the bulk, and these top eigenvectors of the test and training data are close.
    \end{remark}
\end{theorem}

\subsection{Estimating $r^2$, $\sigma^2$, and $\alpha_i$}

Our approach is simply to estimate the $\alpha_i$'s using the classical principal components regression (PCR);
see \cite{jolliffe1982note, hubert2003robust} for details, also
described in steps 1 and 2 of Algorithm~\ref{alg:aroti-gcv}.
We then compute a transformed model which is right-rotationally invariant, from which Algorithm~\ref{alg:roti-gcv} steps 1-6 can be used to find $r^2$ and $\sigma^2$. 

We require some notation; we focus on subsets $S$ of the indices of all nonzero singular values $\mathcal{N} = \{i \in [p] : (\bl{D}^\top \bl{D})_{ii} > 0\}$. Let $\bl{O}_S \in \mathbb{R}^{|S| \times p}$ for any $S$ denote the rows of $\bl{O}$ indexed by $S$. Let $\bl{D}_S \in \R^{(n - |S|) \times p}$ denote the rows of $\bl{D}$ indexed by $S$. Finally, for a set $S \subset \mathcal{N}$, denote $\overline{S} = \mathcal{N} \setminus S$. The estimation for $r^2, \sigma^2$, and $\bs\alpha$ then proceeds as in steps 1-5 of Algorithm~\ref{alg:aroti-gcv}.
\begin{lemma}\label{lem:align-consistent}
Under the additional assumption that $\bs\epsilon$ has Gaussian entries\footnote{The assumption of Gaussian entries is technical, and we believe it can be removed with some work.}, $\tilde{r}^2$, $\tilde{\sigma}^2$, and $\hat{\bs\alpha}$, from Algorithm~\ref{alg:aroti-gcv}, are all strongly consistent. 
\end{lemma}

\subsection{Cross-validation metric}
Our approach for cross-validation is to then again explicitly compute the risk formulas $\mathcal{B}_{\bl{X}}(\hat{\bs\beta}_\lambda, \bs\beta)$ and $\mathcal{V}_{\bl{X}}(\hat{\bs\beta}_\lambda, \bs\beta)$,
where we estimate the unknown parameters from data. To apply our method, we therefore need to consistently estimate $r^2, \sigma^2, \bs\alpha$, and $\{\mathfrak{d}_i^2\}_{i \in \mathcal{J}_c}$ and $\mathfrak{d}_\bulk^2$. Lemma~\ref{lem:align-consistent} gives us most of these parameters, but we still require consistent estimates of the $\mathfrak{d}_i$'s. This is a nonissue in cases where $\bl{D}$ and $\tbl{D}$ are modeled as deterministic. If they are random, we expect that if the test data is drawn from the same right-rotationally invariant ensemble as the training data, then, in practice, one can use the eigenvalues of the training data to do this. However, our framework further allows for the test distribution to be drawn from a different right-rotationally invariant ensemble. In such cases, it is possible to then use test data to do this estimation because it only depends on the test covariates $\tbl{X}$ (which the practitioner may have access to), but not its labels; this is done for experiments in Section~\ref{sec:numericals}. 
We therefore make one final assumption:
\begin{enumerate}[label = A\arabic*]
\setcounter{enumi}{10}
\item We assume there exist estimators $\hat{\mathfrak{d}}_i$ and $\hat{\mathfrak{d}}_\bulk$ where we have $\sup_{i \in \mathcal{J}_a \cup \mathcal{J}_c} |\hat{\mathfrak{d}}_i - \mathfrak{d}_i| \xrightarrow{a.s.} 0$, and  $|\hat{\mathfrak{d}}_\bulk - \mathfrak{d}_\bulk| \xrightarrow{a.s.} 0$.\label{item:consist-eigen}
\end{enumerate}
We then take our validation metric as $\aGCV(\lambda) = \mathcal{R}_{\bl{X}, \bl{y}}(\tilde{r}^2, \tilde{\sigma}^2, \hat{\bs\alpha}, \{\hat{\mathfrak{d}}_i\}_{i \in \mathcal{J}_c, \bulk})$, where the latter expression denotes substituting every parameter with its estimated version. Algorithm \ref{alg:aroti-gcv} summarizes our entire procedure\footnote{The algorithm requires a choice of $\mathcal{J}_a, \mathcal{J}_c$; we give a brief overview of selecting $\mathcal{J}_a$ and $\mathcal{J}_c$ in Section~\ref{sec:semi-real-numericals}, followed by a detailed discussion in Appendix~\ref{app:verify}.}.

\begin{algorithm}
    \caption{aROTI-GCV($\lambda$)}\label{alg:aroti-gcv}
    \begin{algorithmic}[1]
        \Require{$\bl{X}, \bl{y}$, regularization strength $\lambda$; index sets $\mathcal{J}_a, \mathcal{J}_c$; consistent estimators $\hat{\mathfrak{d}}_i^2$ and $\hat{\mathfrak{d}}_\bulk^2$; for use in estimation of $r^2, \sigma^2$, grid of regularization strengths $\lambda_1 < \dots < \lambda_L$;}
        \Ensure{ Estimate for ${R}_{\bl{X}, \bl{y}}(\hat{\bs\beta}, \bs\beta)$}
        \State $\bl{X}_\red \gets \bl{X} \bl{O}_{\mathcal{J}_a}^\top$
        \State $\hat{\bs\alpha} \gets (\bl{X}_\red^\top \bl{X}_\red)^{-1} \bl{X}_\red^\top \bl{y}/\sqrt{n}$
        \State $\bl{y}_\new = (\bl{D}_{\bar{\mathcal{J}_a}}^\top \bl{D}_{\bar{\mathcal{J}_a}})^{1/2} ( \bl{X}_\resid^\top \bl{X}_\resid)^{-1} \bl{X}_\resid^\top \bl{y}$
        \State $\bl{X}_\new = (\bl{D}_{\bar{\mathcal{J}_a}}^\top \bl{D}_{\bar{\mathcal{J}_a}})^{1/2} \bl{O}_{\bar{\mathcal{J}_a}}$
        \State \text{Compute } $\tilde{r} = \hat{r}^2_{\{\lambda_i\}_{i = 1} ^L}(\bl{X}_\new, \bl{y}_\new), \tilde{\sigma}^2 = \hat{\sigma}^2_{\{\lambda_i\}_{i = 1} ^L}(\bl{X}_\new, \bl{y}_\new)$
        \State \textbf{return } $\mathcal{R}_{\bl{X}, \bl{y}}\left(\tilde{r}^2, \tilde{\sigma}^2, \hat{\bs\alpha}, \{\hat{\mathfrak{d}}\}_{i \in \mathcal{J}_c, \bulk}\right)$ 
    \end{algorithmic}
\end{algorithm}

\begin{corollary}\label{cor:align-unif}
    Under Assumptions~\ref{item:scaling-matrix}-\ref{item:scaling}, \ref{item:aligned}-\ref{item:consist-eigen}, and the additional assumption that the entries of $\bs\epsilon$ are Gaussian,
    $\aGCV$ satisfies 
    \begin{equation}
        \sup_{\lambda \in [\lambda_1, \lambda_2]} |\aGCV(\lambda) - R_{\bl{X}, \bl{y}}(\hat{\bs\beta}_\lambda, \bs\beta)| \xrightarrow{a.s.} 0.
    \end{equation}
\end{corollary}
\begin{remark}
    This states that aROTI-GCV is uniformly consistent on compact intervals, ensuring the minimizer of aROTI-GCV attains risk that is close to optimal. 
\end{remark}
\section{NUMERICAL EXPERIMENTS}\label{sec:numericals}

In this section, we demonstrate the efficacy of our method via numerical experiments. In producing each plot below, we generate a training set according to a prescribed distribution, as well as a signal vector $\bs\beta$. We then repeatedly resample the training noise $\bs\epsilon$ and run each cross-validation method, in addition to computing the true risk on the test set.

\subsection{Experiments on right-rotationally invariant data}\label{ssec:rri-plot}

\begin{figure*}[t]
    \centering
    \begin{subfigure}[t]{0.49\textwidth}
        \centering
        \includegraphics[width=\textwidth]{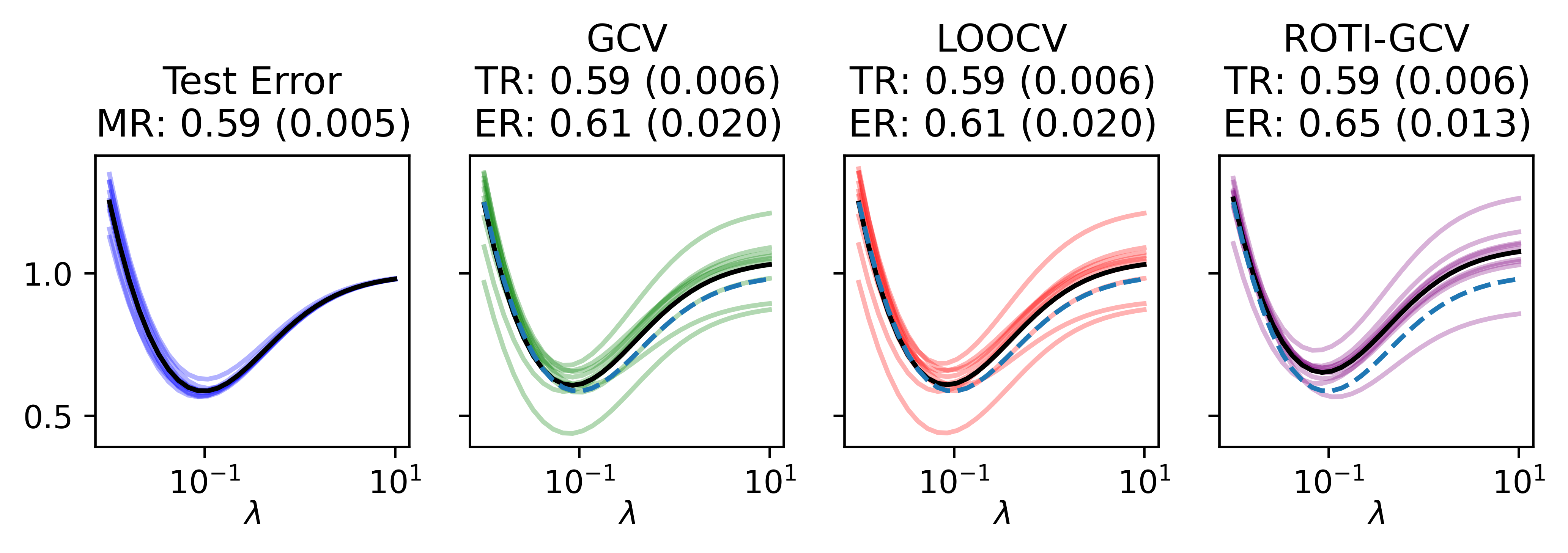}
        \caption{Gaussian data}
        \label{fig:gaussian-gcv}
    \end{subfigure}
    \begin{subfigure}[t]{0.49\textwidth}
        \centering
        \includegraphics[width=\textwidth]{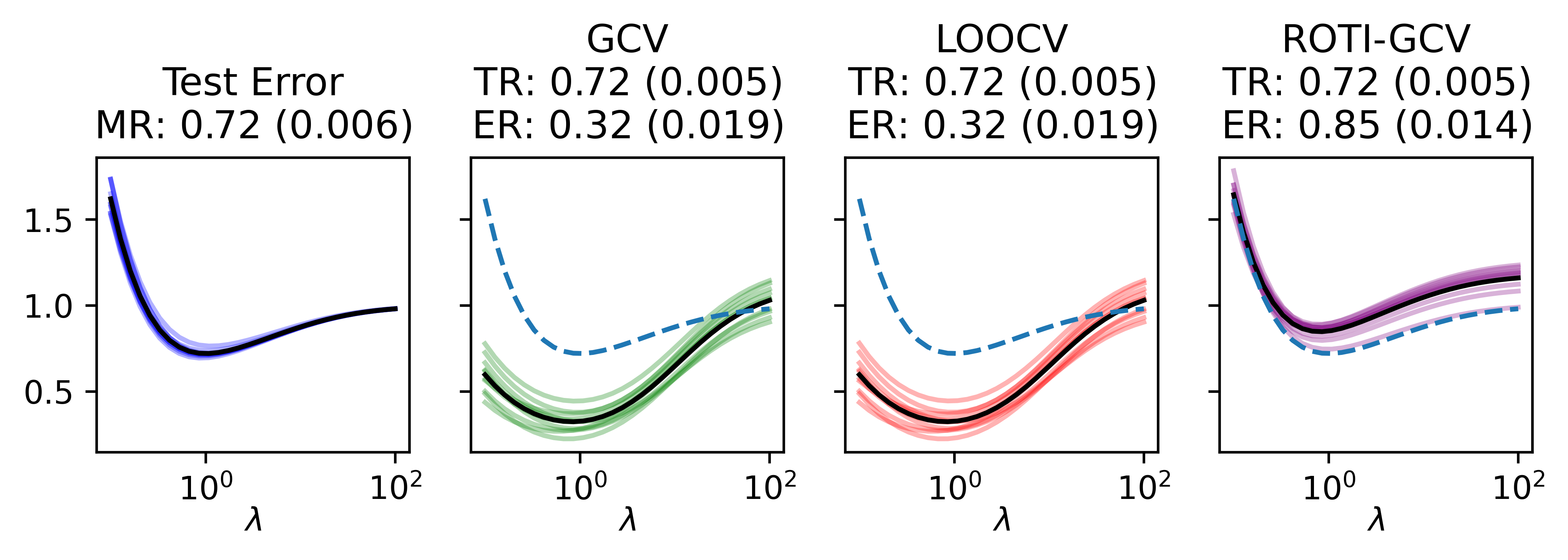}
        \caption{Autocorrelated data, $\rho = 0.8$}
        \label{fig:auto-gcv}
    \end{subfigure}
    \begin{subfigure}[t]{0.49\textwidth}
        \centering
        \includegraphics[width=\textwidth]{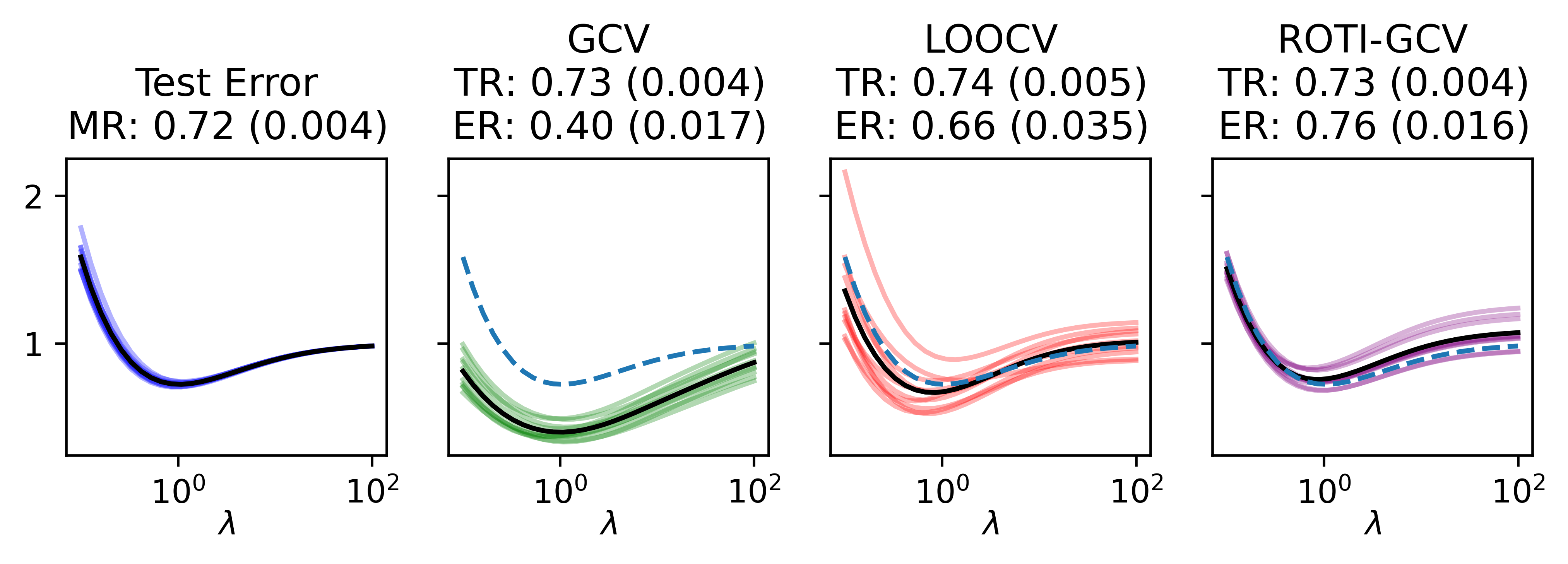}
        \caption{$t$-distributed data}
        \label{fig:t-gcv}
    \end{subfigure}
    \begin{subfigure}[t]{0.49\textwidth}
        \centering
        \includegraphics[width=\textwidth]{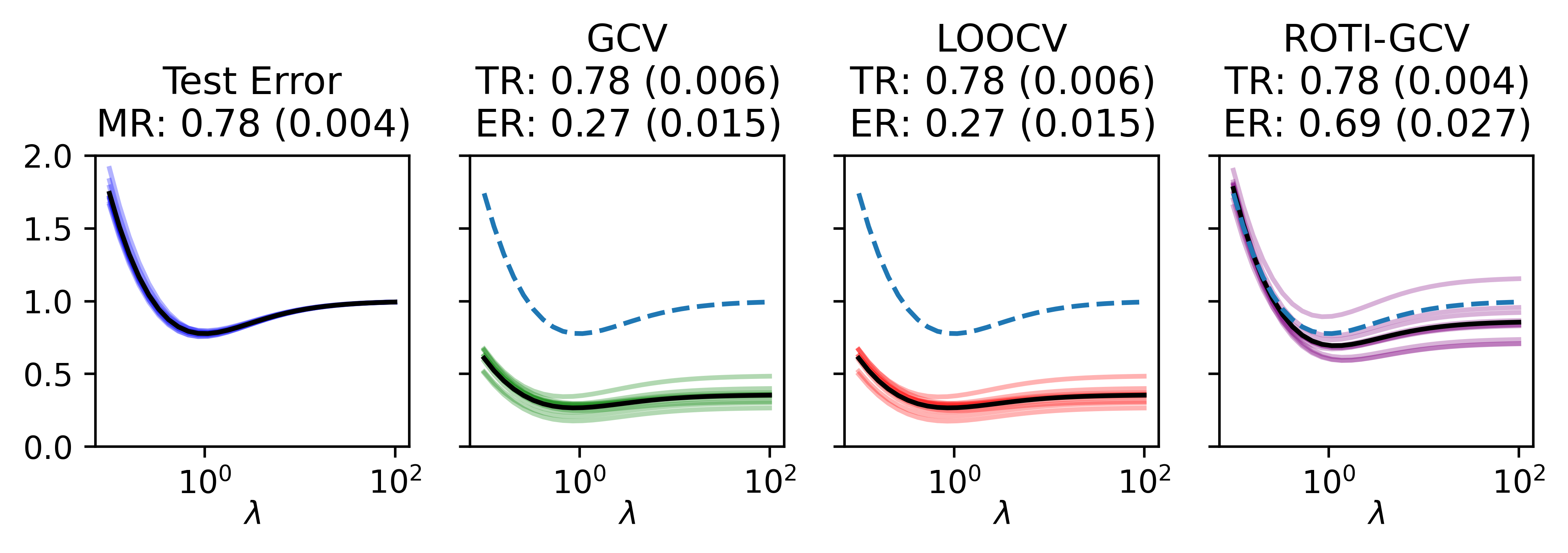}
        \caption{Equicorrelated data, $\rho = 0.8$}
        \label{fig:equi-gcv}
    \end{subfigure}
    \caption{Risk curves produced by the cross-validation methods in addition to the true risk curve. Each plot uses a different form of right-rotationally invariant data, as indicated in each caption. 
    (\subref{fig:auto-gcv}):~Autocorrelated data: rows of $\bl{X}$ are drawn according to $\bl{x}_i = \rho \bl{x}_{i - 1} + \sqrt{1 - \rho^2} \bl{z}_i$, with $\bl{z}_i$ being i.i.d. draws from $\N(0, \I_n)$. We set $\rho = 0.8$. {(\subref{fig:t-gcv}):~$t$-distributed data:} each row is drawn from a multivariate $t$ distribution with $3$ degrees of freedom. (\subref{fig:equi-gcv}):~Equicorrelated data $\bl{X} \in \R^{n \times p}$ has independent columns, but each column follows a multivariate Gaussian distribution with covariance matrix $\bs\Sigma$, where $\Sigma_{ij} = \rho$ if $i \neq j$, and $\Sigma_{ii} = 1$.
    All simulations have $n = p = 1000$ and $r^2 = \sigma^2 = 1$. The $x$-axis for every plot is $\lambda$, the regularization parameter. The colored lines (blue, green, red, purple) are one of 10 iterations. In each iteration, we compute the cross-validation metric over a range of $\lambda$ to produce the line, which reflects the estimated out-of-sample risk. The black line of each plot shows the average result for that method. The dashed blue line shows the average expected MSE curve as a benchmark.} 
    \label{fig:gcv-synth-perf}
\end{figure*}

Figure~\ref{fig:gcv-synth-perf} illustrates the performance of ROTI-GCV and compares it to LOOCV and GCV.  In these figures, the Tuned Risk (abbreviated TR) refers to the out-of-sample risk obtained when we use the value of $\lambda$ that optimizes the given cross-validation method. Minimum risk (MR), denotes the minimum of the true expected out-of-sample risk curve. Standard errors are shown in parentheses for all tuned risks. Estimated Risk (ER) refers to the value of the out-of-sample risk that would be estimated using the given cross-validation method. As expected, in the Gaussian setting all three methods perform well. However, for dependent data, such as in the autocorrelated and equicorrelated cases, we observe that the risk curves produced by GCV and LOOCV are wildly off from the truth, whereas ROTI-GCV accurately captures the true risk curve. Surprisingly, the tuned risk values from LOOCV, GCV are still decently close to that of ROTI-GCV. 
As discussed in Remark~\ref{rem:gcv-consis}, GCV (and also LOOCV) estimate a quantity with an additional factor of $\sigma^2$; thus, to make the evaluation of estimated risks fair, these two curves are plotted with this term removed.

\subsection{Experiments under Signal-PC alignment}

\begin{figure*}[t]%
    \centering%
    \begin{subfigure}[t]{0.49\textwidth}%
        \centering%
        \captionsetup{width=.9\textwidth}%
        \includegraphics[width=\textwidth]{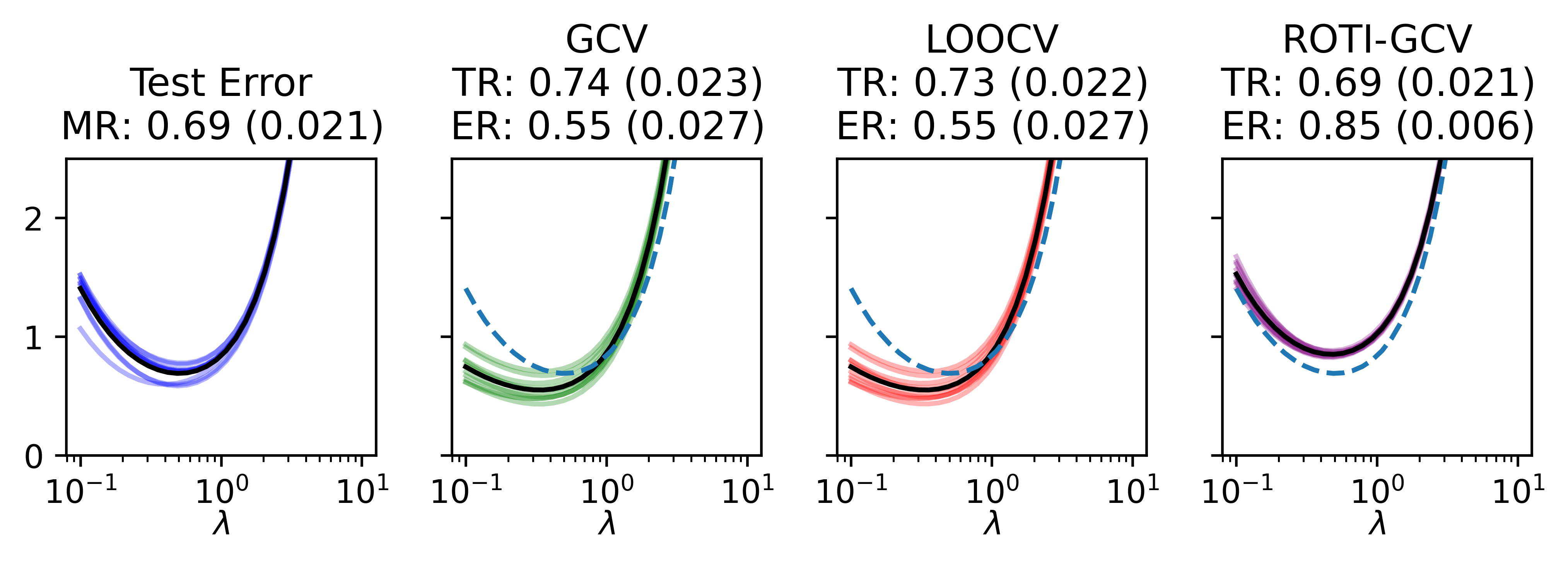}%
        \caption{Signal-PC alignment}%
        \label{fig:aligned-gcv}%
    \end{subfigure}%
    \hfill%
    \begin{subfigure}[t]{0.49\textwidth}%
        \centering%
        \captionsetup{width=.9\textwidth}%
        \includegraphics[width=\textwidth]{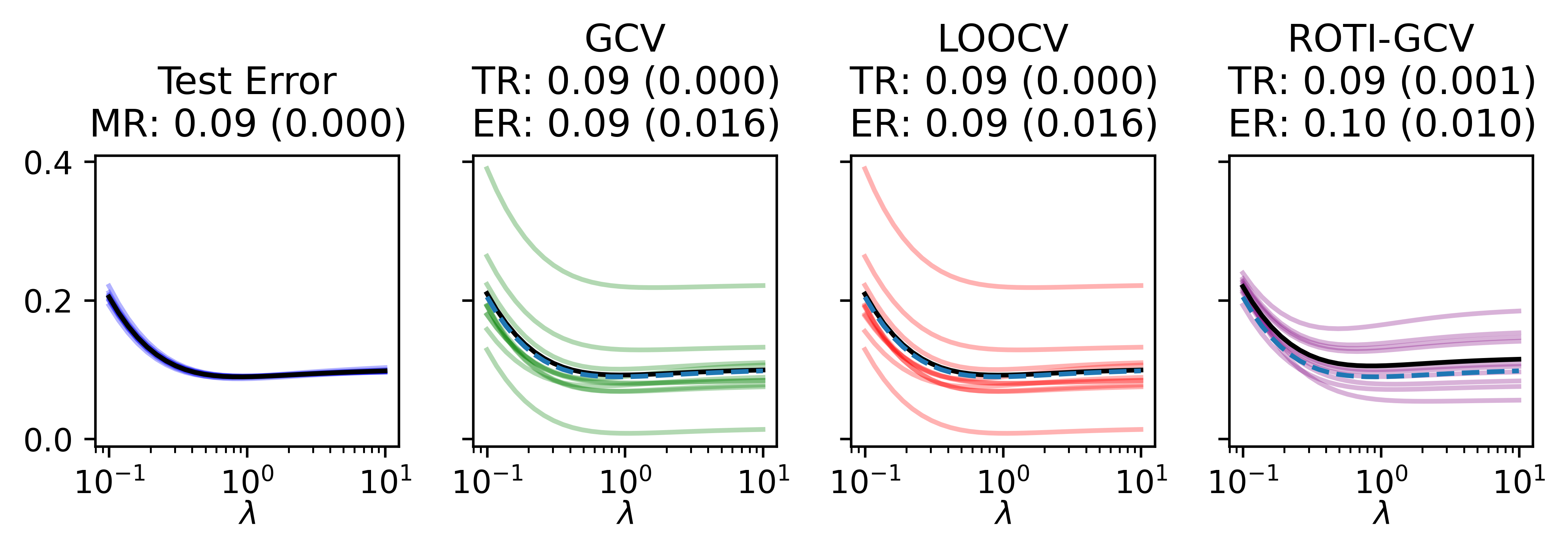}%
        \caption{Gaussian mixture}
        \label{fig:mixture}%
    \end{subfigure}%
    \hfill%
    \begin{subfigure}[t]{0.49\textwidth}
        \centering
        \captionsetup{width=.9\textwidth}%
        \includegraphics[width=\textwidth]{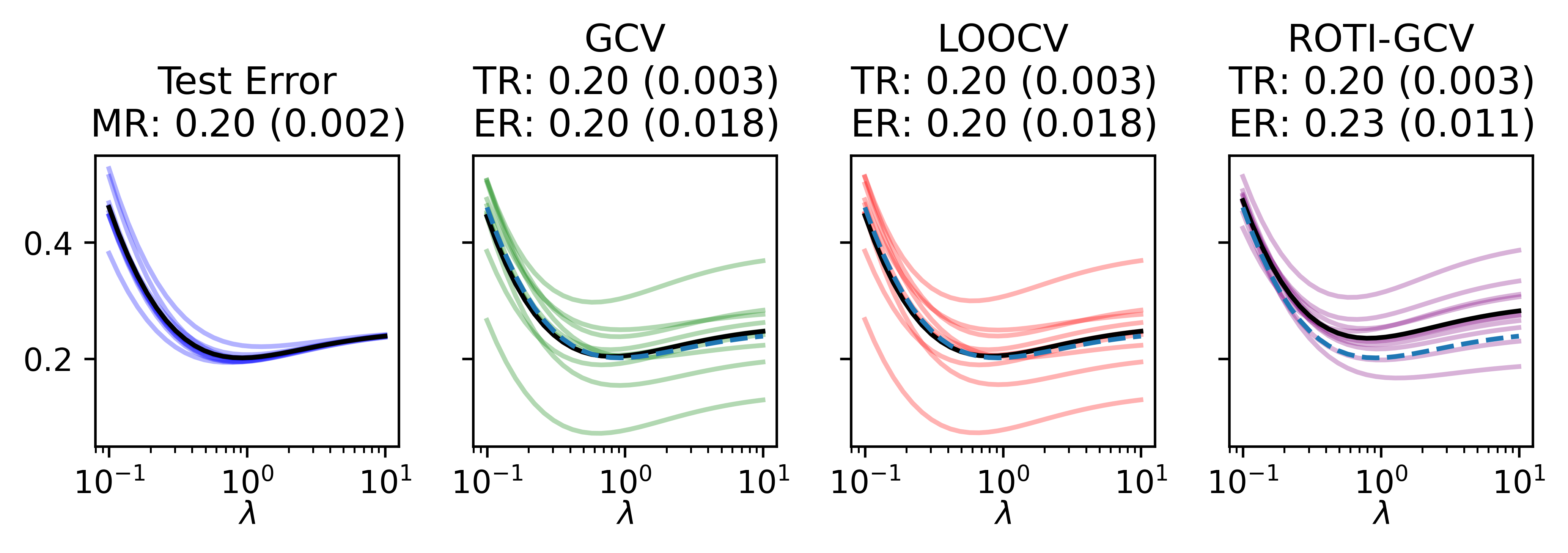}%
        \caption{Equicorrelated rows, $\rho = 0.5$}
        \label{fig:row-equi}
    \end{subfigure}
    \hfill%
    \begin{subfigure}[t]{0.49\textwidth}
        \centering
        \captionsetup{width=.9\textwidth}%
        \includegraphics[width=\textwidth]{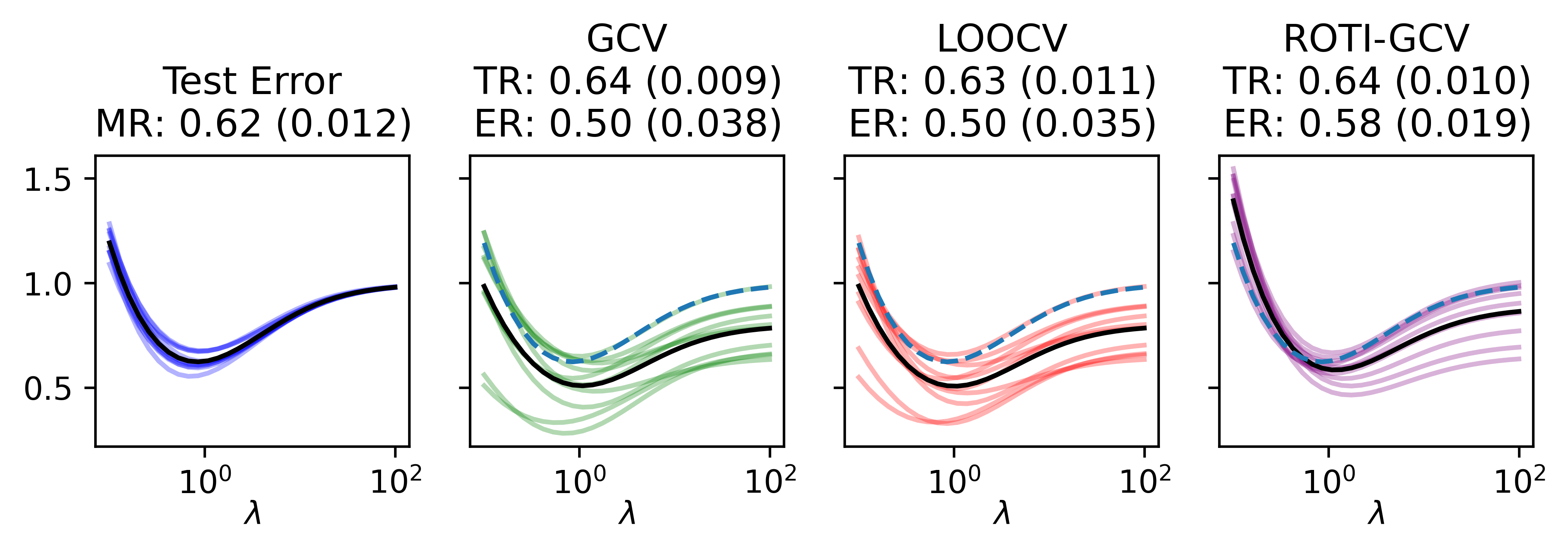}
        \caption{Speech Data without Signal-PC alignment}
        \label{fig:speech-data-result}
    \end{subfigure}
    \hfill%
    \begin{subfigure}[t]{0.49\textwidth}
        \centering
        \captionsetup{width=.9\textwidth}%
        \includegraphics[width=\linewidth]{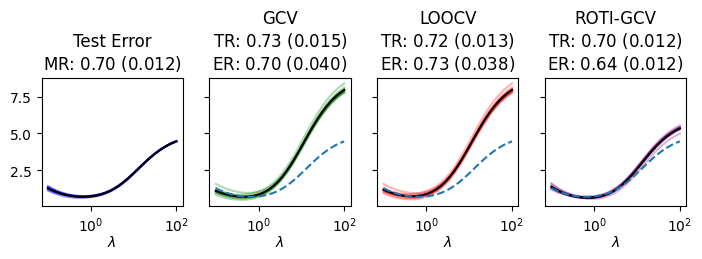}%
        \caption{Speech Data with Signal-PC alignment}
        \label{fig:speech-data-align}
    \end{subfigure}
    \hfill%
    \begin{subfigure}[t]{0.49\textwidth}
            \centering%
        \captionsetup{width=.9\textwidth}%
        \includegraphics[width=\textwidth]{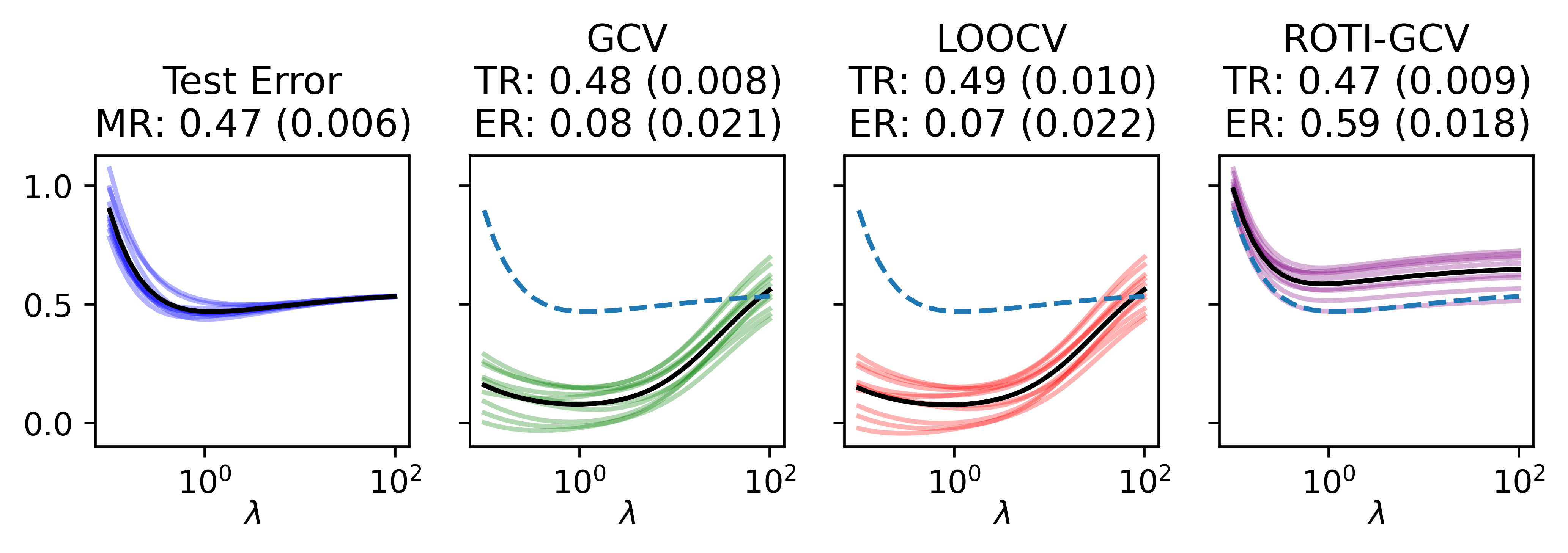}
        \caption{Residualized returns}%
        \label{fig:sp500}%
    \end{subfigure}
    \caption{As in Figure~\ref{fig:gcv-synth-perf}, curves show the risk and estimated risk as a function of $\lambda$. (\subref{fig:aligned-gcv}): $\bl{X}$ has autocorrelated rows, with $\rho = 0.8$. The top 10 eigenvectors ($\mathcal{J}_c = [10]$) are coupled, so that the top 10 eigenvectors of the test set are equal to those of $\bl{X}$. $\bs\beta = \bs\beta' + \sqrt{n}\sum_{i = 1} ^{10} \frac{i}{10} \bl{o}_i$, with $\|\bs\beta'\|^2 = \sqrt{n}$, $\sigma^2 = 1$, $n = p = 1000$. (\subref{fig:mixture}) i.i.d. rows drawn from a Gaussian mixture, i.e. $\bl{x}_i$ is drawn from $\frac{1}{2} \N(\bl{3}, \I_p) + \frac{1}{2} \N(-\bl{3}, \I_p)$. When computing ROTI-GCV, we set $\mathcal{J}_c = \{1\}$. (\subref{fig:row-equi}): i.i.d. rows drawn from an equicorrelated Gaussian, i.e. $\bl{x}_i \sim \N(0, \bs\Sigma)$, where $\Sigma_{ij} = \1(i = j) + \rho\1(i \neq j)$. We set $\rho = 0.5$. When computing ROTI-GCV, we set $\mathcal{J}_c = [10]$. \textcolor{black}{(\subref{fig:speech-data-result}): speech data with $\bs\beta$ sampled uniformly from sphere, with $r^2 = \sigma^2 = 1$, $n = p = 400$; we choose $\mathcal{J}_c = [3]$. (\subref{fig:speech-data-align}): speech data with $\bs\beta = \bs\beta' + \tfrac{\sqrt{n}}{2}\sum_{i = 1} ^5 \bl{o}_i$; again $r^2 = \sigma^2 = 1$ and $n = p = 400$. We choose $\mathcal{J}_c = [3]$ and $\mathcal{J}_a = [5]$ (discussion in Section~\ref{sec:ex-verify}).} %
    (\subref{fig:sp500}): 30 minute residualized returns sampled every 1 minute; $\bs\beta$ sampled uniformly from sphere.
    }%
    \label{fig:align-real}
\end{figure*}
We now consider settings where the signal $\bs\beta$ aligns with the eigenvectors $\bl{O}$. The relevant method here is aROTI-GCV (Algorithm \ref{alg:aroti-gcv}). In Figure \ref{fig:aligned-gcv}, we show the performance of aROTI-GCV when we set the top eigenvectors of the test set equal to those of the training set, matching the coupled eigenvectors condition (Assumption~\ref{item:coupled}). We further construct the signal such that it aligns with the top 10 eigenvectors. As the conditions match our theory, we see aROTI-GCV performs well. The setting of Figure \ref{fig:aligned-gcv} might look contrived, but we next provide examples of well-known distributions where such structures naturally arise (Figures~\ref{fig:mixture}~and~\ref{fig:row-equi}). 
In Figure~\ref{fig:mixture}, the data is drawn from a mixture of two Gaussians, one centered at $\bl{3} = [3, 3, \dots, 3]$ and the other centered at $-\bl{3}$, each with identity covariance. In this setting, the top eigenvector is very close to  $\bl{3}$ and emerges from the bulk. Furthermore, projected into the subspace $\mathrm{span}(\bl{3})^\perp$, the data is a standard Gaussian and thus right-rotationally invariant. As seen, our method works under this type of covariance structure. Shown in Figure~\ref{fig:row-equi}, we find similar results when each row of $\bl{X}$ is drawn from an equicorrelated Gaussian, i.e., $\bl{x}_i \sim \N(0, \bs\Sigma)$ where $\Sigma_{ij} = \rho^{1-\mathds{1}(i=j)}$. Both examples mirror the setting of \ref{fig:aligned-gcv} in that they are not right-rotationally invariant, as the test and train tests vary in one direction more than the others. However, we see that the modifications in aROTI-GCV enable the method to still succeed. Note that in both of these examples, the top eigenvalue diverges at the rate $O(n)$, which violates our Assumption~\ref{item:j-bounded-spec} as well as the bounded spectrum condition \cite[Assumption~3]{pmlr-v130-patil21a}, meaning neither cross-validation method is provably valid in this setting. However, we observe that ROTI-GCV still performs reasonably well.
\subsection{Experiments on semi-real data}\label{sec:semi-real-numericals}%
In our semi-real experiments, we use real data for the designs, but generate the signals and responses ourselves. Knowing the true signal enables us to calculate the population risk of any estimator, allowing us to evaluate the performance of our GCV metrics accurately. For such experiments, we must verify whether the assumptions of our theory are met. To this end, we provide a series of diagnostics to check whether the most important assumptions hold, including to select the index sets $\mathcal{J}_a$ and $\mathcal{J}_c$. We briefly list these here, and defer more details to Appendix~\ref{app:verify}, including examples of running these diagnostics for multiple datasets.

\textit{(i) regularity of singular value distributions (Assumptions~\ref{item:bound-op}, \ref{item:j-bounded-spec})}. Plot histogram of singular values to check for outliers; see Figure~\ref{fig:speech-data-spectrum}.

\textit{(ii) signal-PC alignment: relationship between $\bs\beta$ and $\{\bl{o}_i\}_{i = 1} ^p$ (Assumptions~\ref{item:indep-beta}, \ref{item:aligned})}. \cite{li2023spectrumaware} provide a hypothesis testing framework for identifying $\mathcal{J}_a$, wherein one computes a $p$-value per eigenvector to test whether or not it is aligned; the Benjamini-Hochberg procedure can then be used to control the false discovery rate. See Table~\ref{table:final}.

\textit{(iii) coupling: relationship between $\{\bl{o}_i\}_{i = 1} ^p$ and $\{\tbl{o}_j\}_{i=1} ^p$ (Assumptions~\ref{item:indep-test}, \ref{item:coupled})}. We compute overlaps $\sqrt{p}\langle \bl{o}_i, \tbl{o}_j\rangle$ for the top singular vectors. Note if {$\bl{o}_i~\indep~\tbl{o}_j$} (as under \ref{item:indep-test}), then $\sqrt{p}\langle \bl{o}_i, \tbl{o}_j \rangle \to \N(0, 1)$. If $\sqrt{p}\langle \bl{o}_i, \tbl{o}_j \rangle$ is large relative to this null distribution, $\bl{o}_i$, $\tbl{o}_j$ should be coupled. See Table~\ref{table:speech-overlaps}. We leave development of a formal hypothesis test to future work.

\subsubsection{Speech data}\label{sec:ex-verify}
The designs we use consist of speech data sourced from OpenML \citep{OpenMLSpeech}. Each row has dimension 400 and consists of an i-vector (a type of featurization for audio data, see e.g. \cite{ibrahim2018vector}) of a speech segment. In this first example, we fully illustrate all the proposed diagonstics; as such, we consider both when $\bs\beta$ is uniformly drawn on the sphere, so there is no signal-PC alignment, and when signal-PC alignment is present, to illustrate how to choose $\mathcal{J}_a$. 

We first examine the distribution of singular values in Figure~\ref{fig:speech-data-spectrum}, finding there are no outlier values. Next, we compute overlaps of test and train eigenvectors, shown in Table~\ref{table:speech-overlaps}, finding that the top 3 eigenvectors of each have strong overlaps. This motivates setting $\mathcal{J}_c = [3]$\footnote{One can further couple the next few eigenvectors (e.g. $\bl{o}_6$ with $\tbl{o}_4$ and so on), but we observe this has no impact on the results.}. The results for this setting when $\bs\beta$ is drawn uniformly from the sphere are shown in Figure~\ref{fig:speech-data-result}, where we see successfully capture the true loss curve and that the estimated risk values are superior to those given by GCV and LOOCV.

Next, in the setting with added signal-PC alignment, we instead take $\bs\beta = \bs\beta' + \tfrac{\sqrt{n}}{2}\sum_{i = 1} ^5 \bl{o}_i$, meaning the aligned set is $\mathcal{J}_a = [5]$. We use the hypothesis testing framework of \cite{li2023spectrumaware} and compute Benjamini-Hochberg-adjusted $p$-values for alignment, shown in Table~\ref{table:final}; here, we clearly identify the set of aligned eigenvectors. The resulting loss curves for this setting are then shown in Figure~\ref{fig:speech-data-align}. 
\begin{figure}[t]%
    \centering%
    \includegraphics[width=0.8\linewidth]{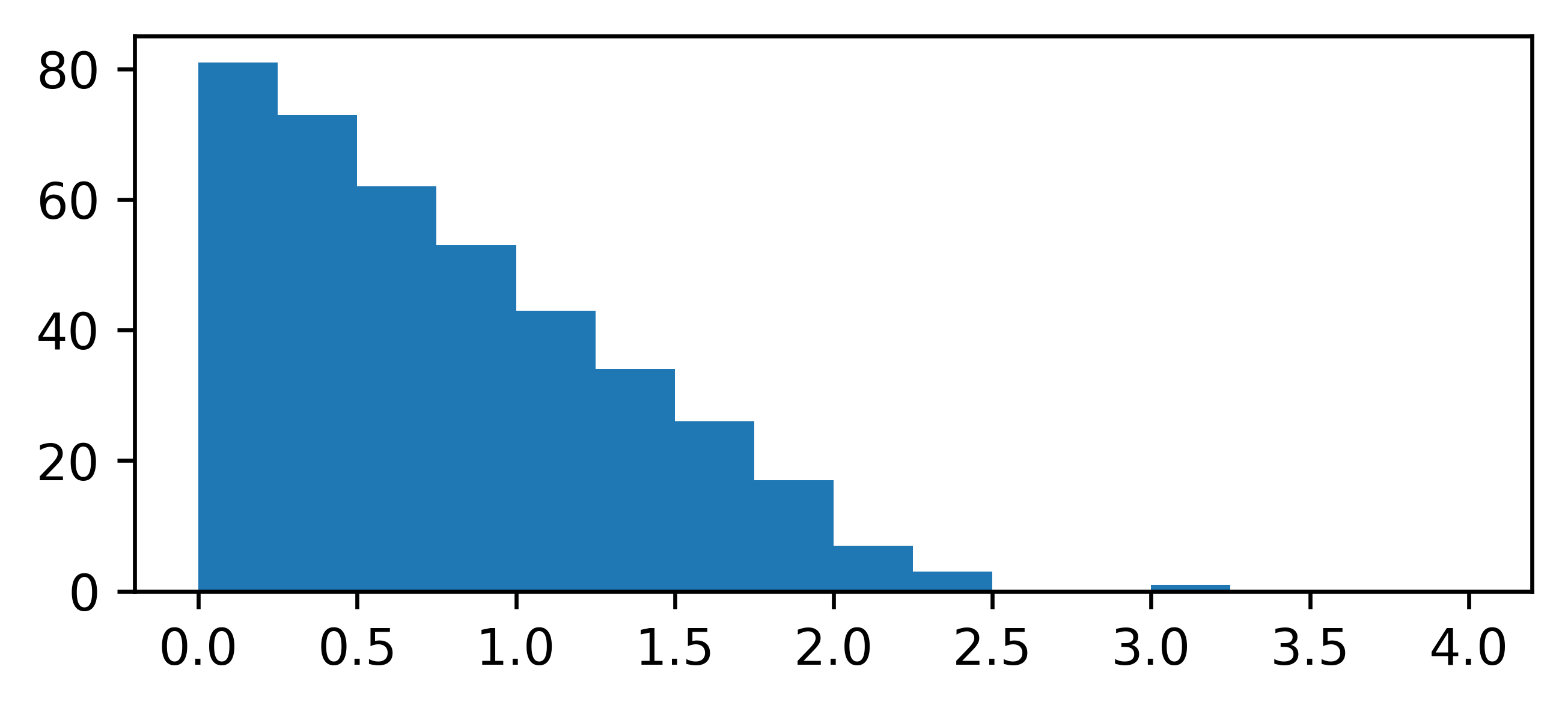}%
    \caption{Singular value distribution for speech data.}%
    \label{fig:speech-data-spectrum}%
\end{figure}%
\begin{table}%
    \caption{Overlaps for speech data setting; Cell $(i, j)$ contains $\sqrt{p}|\langle \bl{o}_i, \tbl{o}_j \rangle|$.} \label{table:speech-overlaps}
    \centering
    \resizebox{0.9\linewidth}{!}{\tiny%
    \setlength\tabcolsep{1.5pt}\renewcommand{\arraystretch}{1.5}%
    \sffamily%
    \begin{tabular}{p{1.5em}C{2.5em}C{2.5em}C{2.5em}C{2.5em}C{2.5em}C{2.5em}C{2.5em}C{2.5em}C{2.5em}C{2.5em}}%
     & $\tbl{o}_{1}$ & $\tbl{o}_{2}$ & $\tbl{o}_{3}$ & $\tbl{o}_{4}$ & $\tbl{o}_{5}$ & $\tbl{o}_{6}$ & $\tbl{o}_{7}$ & $\tbl{o}_{8}$ & $\tbl{o}_{9}$ & $\tbl{o}_{10}$ \\
    $\bl o_{1}$ & \cellcolor[HTML]{67000d} \textcolor[HTML]{f1f1f1}{17.30} & \cellcolor[HTML]{fdd0bc} \textcolor[HTML]{000000}{2.72} & \cellcolor[HTML]{ffeee7} \textcolor[HTML]{000000}{0.63} & \cellcolor[HTML]{fedecf} \textcolor[HTML]{000000}{2.01} & \cellcolor[HTML]{fee4d8} \textcolor[HTML]{000000}{1.55} & \cellcolor[HTML]{feeae0} \textcolor[HTML]{000000}{1.02} & \cellcolor[HTML]{fff2eb} \textcolor[HTML]{000000}{0.30} & \cellcolor[HTML]{feeae1} \textcolor[HTML]{000000}{0.96} & \cellcolor[HTML]{fee2d5} \textcolor[HTML]{000000}{1.70} & \cellcolor[HTML]{fff2ec} \textcolor[HTML]{000000}{0.27} \\
    $\bl o_{2}$ & \cellcolor[HTML]{ffece3} \textcolor[HTML]{000000}{0.83} & \cellcolor[HTML]{67000d} \textcolor[HTML]{f1f1f1}{15.24} & \cellcolor[HTML]{ffeee6} \textcolor[HTML]{000000}{0.68} & \cellcolor[HTML]{fee3d7} \textcolor[HTML]{000000}{1.63} & \cellcolor[HTML]{fee7db} \textcolor[HTML]{000000}{1.34} & \cellcolor[HTML]{fee8dd} \textcolor[HTML]{000000}{1.22} & \cellcolor[HTML]{ffebe2} \textcolor[HTML]{000000}{0.89} & \cellcolor[HTML]{fee5d9} \textcolor[HTML]{000000}{1.42} & \cellcolor[HTML]{fff2ec} \textcolor[HTML]{000000}{0.27} & \cellcolor[HTML]{fedaca} \textcolor[HTML]{000000}{2.19} \\
    $\bl o_{3}$ & \cellcolor[HTML]{fee9df} \textcolor[HTML]{000000}{1.08} & \cellcolor[HTML]{fee0d2} \textcolor[HTML]{000000}{1.93} & \cellcolor[HTML]{d11e1f} \textcolor[HTML]{f1f1f1}{10.93} & \cellcolor[HTML]{fee3d7} \textcolor[HTML]{000000}{1.60} & \cellcolor[HTML]{ffeee7} \textcolor[HTML]{000000}{0.64} & \cellcolor[HTML]{fedecf} \textcolor[HTML]{000000}{2.02} & \cellcolor[HTML]{fca78b} \textcolor[HTML]{000000}{4.63} & \cellcolor[HTML]{fff2eb} \textcolor[HTML]{000000}{0.35} & \cellcolor[HTML]{fcb398} \textcolor[HTML]{000000}{4.14} & \cellcolor[HTML]{fdd0bc} \textcolor[HTML]{000000}{2.75} \\
    $\bl o_{4}$ & \cellcolor[HTML]{fff2ec} \textcolor[HTML]{000000}{0.29} & \cellcolor[HTML]{fdcebb} \textcolor[HTML]{000000}{2.78} & \cellcolor[HTML]{fdccb8} \textcolor[HTML]{000000}{2.92} & \cellcolor[HTML]{fff3ed} \textcolor[HTML]{000000}{0.22} & \cellcolor[HTML]{fcb79c} \textcolor[HTML]{000000}{3.98} & \cellcolor[HTML]{fb7c5c} \textcolor[HTML]{f1f1f1}{6.63} & \cellcolor[HTML]{fdcbb6} \textcolor[HTML]{000000}{2.98} & \cellcolor[HTML]{fee3d6} \textcolor[HTML]{000000}{1.65} & \cellcolor[HTML]{ffefe8} \textcolor[HTML]{000000}{0.54} & \cellcolor[HTML]{fee4d8} \textcolor[HTML]{000000}{1.56} \\
    $\bl o_{5}$ & \cellcolor[HTML]{fee7dc} \textcolor[HTML]{000000}{1.27} & \cellcolor[HTML]{fdd0bc} \textcolor[HTML]{000000}{2.70} & \cellcolor[HTML]{fcb69b} \textcolor[HTML]{000000}{4.02} & \cellcolor[HTML]{fedbcc} \textcolor[HTML]{000000}{2.14} & \cellcolor[HTML]{fc9d7f} \textcolor[HTML]{000000}{5.11} & \cellcolor[HTML]{fee7db} \textcolor[HTML]{000000}{1.29} & \cellcolor[HTML]{fcae92} \textcolor[HTML]{000000}{4.34} & \cellcolor[HTML]{fca689} \textcolor[HTML]{000000}{4.72} & \cellcolor[HTML]{fdcbb6} \textcolor[HTML]{000000}{2.95} & \cellcolor[HTML]{fed9c9} \textcolor[HTML]{000000}{2.24} \\
    $\bl o_{6}$ & \cellcolor[HTML]{fee8dd} \textcolor[HTML]{000000}{1.19} & \cellcolor[HTML]{ffefe8} \textcolor[HTML]{000000}{0.56} & \cellcolor[HTML]{fca486} \textcolor[HTML]{000000}{4.82} & \cellcolor[HTML]{fc8b6b} \textcolor[HTML]{f1f1f1}{5.95} & \cellcolor[HTML]{fff1ea} \textcolor[HTML]{000000}{0.37} & \cellcolor[HTML]{fcb99f} \textcolor[HTML]{000000}{3.83} & \cellcolor[HTML]{fee6da} \textcolor[HTML]{000000}{1.37} & \cellcolor[HTML]{ffece3} \textcolor[HTML]{000000}{0.87} & \cellcolor[HTML]{fca183} \textcolor[HTML]{000000}{4.98} & \cellcolor[HTML]{feeae0} \textcolor[HTML]{000000}{1.04} \\
    $\bl o_{7}$ & \cellcolor[HTML]{fedaca} \textcolor[HTML]{000000}{2.21} & \cellcolor[HTML]{ffeee6} \textcolor[HTML]{000000}{0.70} & \cellcolor[HTML]{fedbcc} \textcolor[HTML]{000000}{2.14} & \cellcolor[HTML]{fdd1be} \textcolor[HTML]{000000}{2.65} & \cellcolor[HTML]{fca486} \textcolor[HTML]{000000}{4.84} & \cellcolor[HTML]{fee1d4} \textcolor[HTML]{000000}{1.77} & \cellcolor[HTML]{ffefe8} \textcolor[HTML]{000000}{0.54} & \cellcolor[HTML]{feeae0} \textcolor[HTML]{000000}{1.00} & \cellcolor[HTML]{fee3d6} \textcolor[HTML]{000000}{1.65} & \cellcolor[HTML]{fdd3c1} \textcolor[HTML]{000000}{2.53} \\
    $\bl o_{8}$ & \cellcolor[HTML]{fff0e9} \textcolor[HTML]{000000}{0.46} & \cellcolor[HTML]{fff4ee} \textcolor[HTML]{000000}{0.15} & \cellcolor[HTML]{fdcbb6} \textcolor[HTML]{000000}{2.96} & \cellcolor[HTML]{fff4ee} \textcolor[HTML]{000000}{0.16} & \cellcolor[HTML]{fee3d6} \textcolor[HTML]{000000}{1.66} & \cellcolor[HTML]{fcab8f} \textcolor[HTML]{000000}{4.49} & \cellcolor[HTML]{fff3ed} \textcolor[HTML]{000000}{0.22} & \cellcolor[HTML]{fff1ea} \textcolor[HTML]{000000}{0.40} & \cellcolor[HTML]{fee1d3} \textcolor[HTML]{000000}{1.82} & \cellcolor[HTML]{fff4ef} \textcolor[HTML]{000000}{0.10} \\
    $\bl o_{9}$ & \cellcolor[HTML]{fff2eb} \textcolor[HTML]{000000}{0.30} & \cellcolor[HTML]{fff1ea} \textcolor[HTML]{000000}{0.40} & \cellcolor[HTML]{ffede5} \textcolor[HTML]{000000}{0.72} & \cellcolor[HTML]{fff5f0} \textcolor[HTML]{000000}{0.01} & \cellcolor[HTML]{fee3d7} \textcolor[HTML]{000000}{1.63} & \cellcolor[HTML]{fee6da} \textcolor[HTML]{000000}{1.39} & \cellcolor[HTML]{fcb99f} \textcolor[HTML]{000000}{3.86} & \cellcolor[HTML]{fcaa8d} \textcolor[HTML]{000000}{4.53} & \cellcolor[HTML]{fee8de} \textcolor[HTML]{000000}{1.12} & \cellcolor[HTML]{fdd3c1} \textcolor[HTML]{000000}{2.57} \\
    $\bl o_{10}$ & \cellcolor[HTML]{fee8de} \textcolor[HTML]{000000}{1.14} & \cellcolor[HTML]{fdd4c2} \textcolor[HTML]{000000}{2.51} & \cellcolor[HTML]{ffebe2} \textcolor[HTML]{000000}{0.90} & \cellcolor[HTML]{ffece4} \textcolor[HTML]{000000}{0.77} & \cellcolor[HTML]{fff1ea} \textcolor[HTML]{000000}{0.37} & \cellcolor[HTML]{ffece4} \textcolor[HTML]{000000}{0.79} & \cellcolor[HTML]{fdd5c4} \textcolor[HTML]{000000}{2.44} & \cellcolor[HTML]{fee1d3} \textcolor[HTML]{000000}{1.83} & \cellcolor[HTML]{fee1d4} \textcolor[HTML]{000000}{1.81} & \cellcolor[HTML]{fed8c7} \textcolor[HTML]{000000}{2.34} \\
    \end{tabular}
    }
\end{table}%
\begin{table}[t]%
    \caption{Benjamini-Hochberg adjusted $p$-values for alignment.}%
    \centering%
    \resizebox{0.7\linewidth}{!}{%
    \begin{tabular}{rrrrrrrrrrr}%
    & $\bl{o}_1$ & $\bl{o}_2$ & $\bl{o}_3$ & $\bl{o}_4$ & $\bl{o}_5$ \\%
    $p$ & 0.000 & 0.000 & 0.000 & 0.000 & 0.000 \\%
    & $\bl{o}_6$ & $\bl{o}_7$ & $\bl{o}_8$ & $\bl{o}_9$ & $\bl{o}_{10}$ \\%
    $p$ & 0.934 & 0.588 & 0.651 & 0.913 & 0.395 \\%
    \end{tabular}%
    }
    \label{table:final}%
\end{table}%
\subsubsection{\textcolor{black}{Residualized returns}}

\color{black}
We next consider designs consisting of residualized returns from 493 stocks retrieved from Polygon API\footnote{We remove the top 8 right singular vectors from data matrix $\bl{X}$. This is analogous to \textit{residualization} of returns, a common practice in financial modeling.  We compute the factors to delete by taking the top PCs of the covariance matrix computed using both test and train data; PCA is crude/weak as a factor model and we have only 8 factors; hence it is more comparable to compute PCs looking at both train and test data. See Appendix~\ref{app:semi-real} for more details on residualization.} \citep{Polygon}. There is one row per minute, and each contains the residualized return for each stock over the last 30 minutes. Since they share 29 minutes of returns, consecutive rows are heavily correlated. The regression corresponds to the known practice of replicating the residualized returns of an unknown asset $\bl{y}$ using a portfolio of stocks $\hat{\bs\beta}$ (with returns $\bl{X}\hat{\bs\beta}$). \textcolor{black}{The diagnostic plots in this case are tame, and we require no coupling; these are deferred to Appendix~\ref{app:verify}.} The loss curves produced by ROTI-GCV better capture the true loss curve, and that the estimated risk values (ER) provide superior estimates of the true out-of-sample risk compared to LOOCV and GCV.
\color{black}
\section{DISCUSSION AND CONCLUSION}
\textbf{Takeaways:} We give a precise analysis of ridge regression for right-rotationally invariant designs. These designs depart crucially from the existing framework of i.i.d. samples from distributions with light enough tails, instead allowing for structured dependence and heavy tails. We then characterize GCV in this setting, finding that it is inconsistent, and introduce ROTI-GCV, a consistent alternative; various synthetic and semi-real experiments illustrate the accuracy of our method.
As a byproduct of our analysis, we produce new estimators for signal-to-noise ratio ($r^2$) and noise variance ($\sigma^2$) robust to these deviations from the i.i.d. assumptions. At the core, our method relies on noticing that the relation given by Lemma~\ref{lem:gcv-as} yields a different estimating equation for every choice of $\lambda$; using this to generate a series of estimating equations, we obtain stable estimators of $r^2, \sigma^2$. This core idea applies whenever an analogue of Lemma~\ref{lem:gcv-as} can be derived, giving it reach beyond ridge regression. As future steps, extending our method to other settings such as generalized linear models, kernel ridge regression, regularized random features regression would be important directions. Furthermore, recent advances in \cite{dudeja2023spectral,wang2022universality} show that right-rotationally invariant distributions have a broad universality class -- in fact as long as the singular vectors of a design matrix satisfy certain generic structure, properties of learning algorithms under such settings are well captured by results derived under the right-rotationally invariant assumption. In particular, the universality of the in-sample risk of ridge regression is already established in these works, and we believe such universality also holds for the out-of-sample risk; thus, our analysis should extend to the entire universality class of these designs, which is remarkably broad. This promises that our approach can have wide reach, allowing reliable hyperparameter tuning for a broad class of covariate distributions. 

\textbf{Limitations:} \textcolor{black}{The well-specification assumption (\ref{item:scaling-matrix}) is crucial and seems more difficult to relax for our designs than in the i.i.d.~case due to the global dependence structure. Right-rotationally invariant designs have difficulty modeling distributions with feature covariances that are much more complicated than those covered in Section~\ref{sec:numericals}, especially when the spectrum is not dominated by eigenvalues. Finding ways to account for different types of feature covariance in data using these designs would naturally increase the broader applicability of our method, such as by modeling the design as $\bl{X} = \bl{Q}^\top \bl{D} \bl{O} \bs\Sigma^{1/2}$ and extending our analysis. Finally, as mentioned in Section~\ref{sec:intro}, our results focus on ridge regression, though we believe similar results for other penalties can be obtained.}

\FloatBarrier

\newpage

\bibliography{refs}

\onecolumn
\aistatstitle{Appendix}
\newpage
\section{Deferred Proofs}

\subsection{Proof of Lemma~\ref{lem:riri-cov}}
\begin{lemma}\label{lem:riri-cov}
    Let $\bl{X} = \bl{Q}^\top \bl{DO}$ be right-rotationally invariant. Then $\E[\bl{X}^\top \bl{X}] = \Tr(\E[\DtD])/p \cdot \I_p$. 
\end{lemma}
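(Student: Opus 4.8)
The plan is to work directly with the singular value decomposition. Write $\bl{X} = \bl{Q}^\top \bl{D}\bl{O}$ where $\bl{Q}$ is orthogonal, $\bl{D} \in \R^{n\times p}$ carries the singular values, and $\bl{O}\in\mathbb{O}(p)$ is Haar-distributed and independent of $(\bl{Q},\bl{D})$. The first step is the trivial algebraic simplification
\[
\bl{X}^\top \bl{X} = \bl{O}^\top \bl{D}^\top \bl{Q}\bl{Q}^\top \bl{D}\bl{O} = \bl{O}^\top (\DtD)\,\bl{O},
\]
using $\bl{Q}\bl{Q}^\top = \I$. So the task reduces to computing $\E[\bl{O}^\top (\DtD)\,\bl{O}]$, and because $\bl{O}$ is independent of $(\bl{Q},\bl{D})$ we may condition on $\bl{D}$ and treat $\bl{M}:=\DtD$ as a fixed symmetric PSD matrix.

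The heart of the argument is to show that $\bl{A} := \E[\bl{O}^\top \bl{M}\bl{O}]$ is a scalar multiple of $\I_p$ for any fixed $\bl{M}$. This follows from right-invariance of the Haar measure: for every fixed $\bl{P}\in\mathbb{O}(p)$ one has $\bl{O}\bl{P}\stackrel{d}{=}\bl{O}$, hence
\[
\bl{P}^\top\bl{A}\bl{P} = \E\big[(\bl{O}\bl{P})^\top \bl{M}(\bl{O}\bl{P})\big] = \E[\bl{O}^\top \bl{M}\bl{O}] = \bl{A}.
\]
Thus $\bl{A}$ commutes with every $\bl{P}\in\mathbb{O}(p)$, and an elementary linear-algebra fact (the commutant of $\mathbb{O}(p)$ inside the $p\times p$ matrices is exactly $\R\,\I_p$) forces $\bl{A} = c\,\I_p$ for some scalar $c$. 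Taking traces and using linearity together with invariance of the trace under conjugation, $cp = \Tr(\bl{A}) = \E[\Tr(\bl{O}^\top \bl{M}\bl{O})] = \Tr(\bl{M})$, so $\E[\bl{O}^\top \bl{M}\bl{O}\mid \bl{M}] = \Tr(\bl{M})/p \cdot \I_p$.

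Finally, one integrates over $\bl{M}=\DtD$ using the tower property:
\[
\E[\bl{X}^\top\bl{X}] = \E\!\left[\frac{\Tr(\DtD)}{p}\,\I_p\right] = \frac{\Tr(\E[\DtD])}{p}\,\I_p,
\]
where the last equality is linearity of trace and expectation, valid whenever $\E[\DtD]$ is finite entrywise (implicit in the statement). I do not expect a genuine obstacle here; the only points requiring care are that the independence of $\bl{O}$ from $(\bl{Q},\bl{D})$ is precisely what licenses the conditioning, and that one should cite (or quickly verify) the fact that a matrix commuting with all orthogonal matrices is scalar. As an alternative that avoids conditioning, one can instead write $\E[\bl{X}^\top\bl{X}] = \E\big[\bl{O}^\top \E[\DtD]\,\bl{O}\big]$ by independence and then apply the invariance argument directly to the deterministic matrix $\E[\DtD]$.
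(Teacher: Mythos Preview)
Your proof is correct but takes a different route from the paper. The paper expands $\bl{O}^\top(\DtD)\bl{O}$ using the diagonal structure of $\DtD$ as $\sum_i D_{ii}^2\,\bl{o}_i\bl{o}_i^\top$, splits the expectation by independence, and then establishes $\E[\bl{o}_i\bl{o}_i^\top]=\I_p/p$ via exchangeability of the rows of $\bl{O}$ (noting that $\mathbb{O}(p)$ contains the permutation matrices). You instead condition on $\bl{D}$, use the full right-invariance $\bl{O}\bl{P}\stackrel{d}{=}\bl{O}$ to show $\E[\bl{O}^\top\bl{M}\bl{O}]$ commutes with every $\bl{P}\in\mathbb{O}(p)$, and invoke the commutant fact to conclude it is scalar. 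Your argument is cleaner and more general in that it never uses the diagonality of $\bl{M}=\DtD$; the paper's argument is slightly more hands-on and, interestingly, only needs invariance under the permutation subgroup rather than the full orthogonal group. Both reach the same trace identification, and your tower-property step (or the alternative of pulling $\E[\DtD]$ inside first) is fine.
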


\begin{proof}
    Let $\bl{O}$ have rows $\bl{o}_i$. Then
    \begin{align*}
        \E[\XtX] &= \E[\bl{O}^\top \DtD \bl{O}] = \sum_{i = 1} ^{n \wedge p} \E[ D_{ii}^2 \bl{o}_i \bl{o}_i^\top] = \sum_{i = 1} ^{n \wedge p} \E[ D_{ii}^2 ]\E[\bl{o}_i \bl{o}_i^\top] \\
        &\stackrel{(*)}{=} \frac{1}{p} \I_p \sum_{i = 1} ^{n \wedge p} \E[ D_{ii}^2 ] = \Tr(\E[\DtD])/p \cdot \I_p
    \end{align*}
    where $(*)$ follows from $\I_p = \E[\bl{O}^\top \bl{O}] = \sum_{i = 1} ^p \E[\bl{o}_i\bl{o}_i^\top]$ and the exchangeability of the $\bl{o}_i$ (which is derived from the fact that $\mathbb{O}(p)$ contains the permutation matrices). 
\end{proof}

\subsection{Useful Results}

\begin{lemma}[Hanson-Wright for Spherical Vectors]\label{lem:hw}
    Let $\bl{v} \in \R^{n}$ be a random vector distributed uniformly on $S^{n - 1}$ and let $\bl{A} \in \R^{n \times n}$ be a fixed matrix. Then
    \begin{align*}
        \P(\left|\bl{v}^\top \bl{A} \bl v - \E[\bl{v}^\top \bl{A} \bl{v}]\right| \geq t) \leq 2\exp\left(-C \min\left(\frac{n^2t^2}{2K^4\|\bl{A}\|_F^2}, \frac{nt}{K^2\|\bl{A}\|}\right)\right).
    \end{align*}
    for some absolute constants $C, K$. 
\end{lemma}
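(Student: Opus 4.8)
The plan is to reduce to the classical Hanson--Wright inequality for Gaussian vectors. Writing $\bl{v}=\bl{g}/\|\bl{g}\|_2$ with $\bl{g}\sim\N(0,\I_n)$ (so that $\bl{v}$ has the correct law and is independent of $\|\bl{g}\|_2$), one has $\bl{v}^\top\bl{A}\bl{v}=\bl{g}^\top\bl{A}\bl{g}/\|\bl{g}\|_2^2$, which exhibits the quadratic form on the sphere as a ratio of a Gaussian chaos and a $\chi^2$ variable. First I would make two harmless reductions: replace $\bl{A}$ by $(\bl{A}+\bl{A}^\top)/2$, which changes neither $\bl{v}^\top\bl{A}\bl{v}$ nor increases $\|\bl{A}\|_F$ or $\|\bl{A}\|_{\op}$, so we may take $\bl{A}$ symmetric; and note that on $S^{n-1}$ both $|\bl{v}^\top\bl{A}\bl{v}|$ and $|\E\,\bl{v}^\top\bl{A}\bl{v}|=|\Tr\bl{A}|/n$ are at most $\|\bl{A}\|_{\op}$, so the left-hand probability vanishes once $t>2\|\bl{A}\|_{\op}$ and we may assume $t\le 2\|\bl{A}\|_{\op}$ throughout.

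Next I would exploit the algebraic identity, valid since $\E[\bl{v}\bl{v}^\top]=\I_n/n$ yields $\E[\bl{v}^\top\bl{A}\bl{v}]=\Tr(\bl{A})/n$:
\[
\bl{v}^\top\bl{A}\bl{v}-\frac{\Tr\bl{A}}{n}=\frac{1}{\|\bl{g}\|_2^2}\Bigl[\bigl(\bl{g}^\top\bl{A}\bl{g}-\Tr\bl{A}\bigr)-\frac{\Tr\bl{A}}{n}\bigl(\|\bl{g}\|_2^2-n\bigr)\Bigr].
\]
On the event $\mathcal{E}=\{\|\bl{g}\|_2^2\ge n/2\}$, and using the Cauchy--Schwarz bound $|\Tr\bl{A}|\le\sqrt{n}\,\|\bl{A}\|_F$ (this is where symmetry is used), the deviation is at most $\tfrac{2}{n}\bigl|\bl{g}^\top\bl{A}\bl{g}-\Tr\bl{A}\bigr|+\tfrac{2\|\bl{A}\|_F}{n^{3/2}}\bigl|\|\bl{g}\|_2^2-n\bigr|$. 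Hence the target probability is at most $\P(\mathcal{E}^c)+\P\bigl(|\bl{g}^\top\bl{A}\bl{g}-\Tr\bl{A}|\ge nt/4\bigr)+\P\bigl(\bigl|\|\bl{g}\|_2^2-n\bigr|\ge n^{3/2}t/(4\|\bl{A}\|_F)\bigr)$, and I would bound each of the three pieces.

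For the second piece, the classical Gaussian Hanson--Wright inequality (e.g.\ Rudelson--Vershynin) gives $2\exp\bigl(-c\min(n^2t^2/\|\bl{A}\|_F^2,\,nt/\|\bl{A}\|_{\op})\bigr)$, exactly the desired form. For the third, the same inequality applied with matrix $\I_n$ (so $\|\I_n\|_F^2=n$, $\|\I_n\|_{\op}=1$) gives $2\exp\bigl(-c\min(n^2t^2/\|\bl{A}\|_F^2,\,n^{3/2}t/\|\bl{A}\|_F)\bigr)$, and since $\|\bl{A}\|_F\le\sqrt{n}\,\|\bl{A}\|_{\op}$ one has $n^{3/2}t/\|\bl{A}\|_F\ge nt/\|\bl{A}\|_{\op}$, so this is again dominated by the target. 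For the first, $\P(\mathcal{E}^c)\le e^{-cn}$ by $\chi^2$ concentration, and in the regime $t\le 2\|\bl{A}\|_{\op}$ one checks that $\min(n^2t^2/\|\bl{A}\|_F^2,\,nt/\|\bl{A}\|_{\op})\le 2n$, so $e^{-cn}$ is absorbed into the same exponential after adjusting the constant. Collecting the three bounds and relabeling the absolute constants yields the statement; the parameter $K$ in the claim is merely an absolute constant here, coming from the sub-Gaussian norm of a standard Gaussian.

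The argument is mostly bookkeeping, so the one place needing genuine care is ensuring that neither the $\chi^2$-fluctuation term nor the failure event $\mathcal{E}^c$ degrades the $\|\bl{A}\|_F^2$-dependence in the sub-Gaussian regime $t\lesssim\|\bl{A}\|_F/\sqrt{n}$; this is exactly why one must use the sharp bound $|\Tr\bl{A}|\le\sqrt{n}\,\|\bl{A}\|_F$ rather than the cruder $|\Tr\bl{A}|\le n\|\bl{A}\|_{\op}$, and why the a priori restriction $t\le 2\|\bl{A}\|_{\op}$ is invoked. As an alternative, one could instead quote a Hanson--Wright inequality for random vectors with the convex/Lipschitz concentration property---which the uniform measure on $S^{n-1}$ satisfies by L\'evy's lemma---but the Gaussian reduction above is self-contained.
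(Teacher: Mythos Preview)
Your proof is correct and takes a genuinely different route from the paper's. The paper argues in two lines: the uniform measure on $S^{n-1}$ satisfies the \emph{convex concentration property} with constant $K/\sqrt{n}$ (a consequence of L\'evy's isoperimetric inequality on the sphere), and then one invokes Adamczak's Hanson--Wright inequality for vectors satisfying convex concentration \cite[Theorem~2.3]{adamczak2014note}, with a rescaling to account for the $1/\sqrt{n}$ factor. You instead perform a direct reduction to the Gaussian Hanson--Wright inequality via the representation $\bl{v}=\bl{g}/\|\bl{g}\|_2$, carefully splitting the deviation into a centered Gaussian chaos, a $\chi^2$ fluctuation weighted by $\Tr\bl{A}$, and a bad event $\{\|\bl{g}\|_2^2<n/2\}$, then verifying that each piece fits under the target exponential. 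Interestingly, you mention the paper's approach in your closing sentence as an alternative.

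The trade-offs: the paper's proof is shorter and immediately generalizes to any distribution with the convex concentration property, but it is less self-contained since it relies on Adamczak's result as a black box. Your argument is more elementary (needing only the classical Gaussian Hanson--Wright and $\chi^2$ tails) and makes the role of the $\sqrt{n}$ scaling fully transparent, at the cost of some bookkeeping. Two minor remarks: the bound $|\Tr\bl{A}|\le\sqrt{n}\,\|\bl{A}\|_F$ holds for arbitrary matrices (Cauchy--Schwarz on the diagonal entries), so the symmetrization step, while harmless, is not actually where it is needed; and your handling of the bad event via the a~priori restriction $t\le 2\|\bl{A}\|_{\op}$ is exactly the right way to absorb $e^{-cn}$ into the target bound.
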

\begin{proof}
    Isoperimetric inequalities imply that vectors uniformly distributed on $S^{n - 1}$ have sub-Gaussian tails around their median (see \cite{schechtmannote}). 
    Explicitly, it is stated that for $\bl{v} \sim \mathrm{Unif}(S^{n - 1})$, one has that for any $1$-Lispchitz function $f: \R^n \to \R$, one has
    \begin{equation}
        \P(|f(\bl{v}) - M| \geq t) \leq 2e^{- nt^2 / 2},
    \end{equation}
    where $M$ denotes the median of $f(X)$ (i.e. $\min(\P(f(\bl{v}) \geq M), \P(f(\bl{v}) \leq M)) \geq \frac{1}{2}$). We now use \cite[Theorem~1.2]{chazottesnote}, which shows that having sub-Gaussian tails with respect to the median is equivalent to having sub-Gaussian tails with respect to the mean, with different constants. Thus, it holds that for any $1$-Lipschitz function $f$, one has
    \begin{equation}
        \P(|f(\bl{v}) - \E f(\bl{v})| \geq t) \leq 4e^{- nt^2 / 32}.
    \end{equation}
    Note that this bound is vacuous whenever $4e^{-nt^2/32} > 1$, and hence we can rewrite this as
    \begin{equation}
        \P(|f(\bl{v}) - \E f(\bl{v})| \geq t) \leq \min(4e^{- nt^2 / 32}, 1)
    \end{equation}
    where we now note that for sufficiently large $K$, one has $\min(4e^{- nt^2 / 32}, 1) \leq 2e^{-nt^2/K}$ for any $n \geq 1$ and $t \geq 0$. Hence, one has
    \begin{equation}\label{eq:subg-mean}
        \P(|f(\bl{v}) - \E f(\bl{v})| \geq t) \leq 2e^{-nt^2/K}.
    \end{equation}
    Finally, \cite[Theorem~2.3]{adamczak2014note} implies that random variables satisfying such types of concentration inequalities must in turn satisfy a Hanson-Wright type inequality. That is, Eq.~\eqref{eq:subg-mean} implies that, for fixed matrix $\bl{A} \in \R^{n \times n}$, one has
    \begin{equation}
        \P(\left|\bl{v}^\top \bl{A} \bl v - \E[\bl{v}^\top \bl{A} \bl{v}]\right| \geq t) \leq 2\exp\left(-C \min\left(\frac{n^2t^2}{2K^4\|\bl{A}\|_F^2}, \frac{nt}{K^2\|\bl{A}\|}\right)\right)
    \end{equation}
    as desired.
\end{proof}

\begin{lemma}[Expectations of Quadratic Forms]\label{lem:expqform}
Let $\bl{v}$ be uniformly distributed on $S^{n - 1}$, and let $\bl{A}$ be an $n \times n$ matrix. Then
\begin{equation}
    \E[\bl{v}^\top \bl{A} \bl{v}] = \Tr(\bl{A} \E[\bl{v}\bl{v}^\top]) = \frac{\Tr(\bl{A})}{n}
\end{equation}
\end{lemma}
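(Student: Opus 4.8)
The plan is to reduce the quadratic form to a trace and then exploit the rotational symmetry of the uniform measure on the sphere. First I would observe that $\bl{v}^\top \bl{A}\bl{v}$ is a scalar, hence equal to its own trace, and apply the cyclic property of the trace to write $\bl{v}^\top\bl{A}\bl{v} = \Tr(\bl{v}^\top\bl{A}\bl{v}) = \Tr(\bl{A}\bl{v}\bl{v}^\top)$. Taking expectations and using linearity of both trace and expectation gives $\E[\bl{v}^\top\bl{A}\bl{v}] = \Tr(\bl{A}\,\E[\bl{v}\bl{v}^\top])$, which is the first claimed equality.

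Next I would identify $\E[\bl{v}\bl{v}^\top]$. The key point is that the law of $\bl{v}$ is invariant under $\mathbb{O}(n)$, so for every $\bl{O}\in\mathbb{O}(n)$ we have $\E[\bl{v}\bl{v}^\top] = \E[(\bl{O}\bl{v})(\bl{O}\bl{v})^\top] = \bl{O}\,\E[\bl{v}\bl{v}^\top]\,\bl{O}^\top$. A matrix that commutes with the action of the full orthogonal group in this sense must be a scalar multiple of the identity, say $\E[\bl{v}\bl{v}^\top] = c\,\I_n$. To pin down $c$, take the trace: $nc = \Tr(\E[\bl{v}\bl{v}^\top]) = \E[\Tr(\bl{v}\bl{v}^\top)] = \E[\|\bl{v}\|_2^2] = 1$ since $\bl{v}\in S^{n-1}$, so $c = 1/n$. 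Substituting back yields $\Tr(\bl{A}\,\E[\bl{v}\bl{v}^\top]) = \Tr(\bl{A})/n$, completing the second equality.

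There is essentially no substantive obstacle here; the only point requiring a word of care is the justification that a matrix fixed under conjugation by all of $\mathbb{O}(n)$ is a multiple of $\I_n$ (equivalently, Schur's lemma applied to the standard representation, or a direct argument: off-diagonal entries flip sign under reflections and diagonal entries are permuted by permutation matrices). Alternatively, one could bypass the symmetry argument entirely and compute $\E[v_i v_j]$ directly — $\E[v_i^2] = 1/n$ by symmetry of the coordinates together with $\sum_i \E[v_i^2] = 1$, and $\E[v_i v_j] = 0$ for $i\neq j$ by the reflection $v_j \mapsto -v_j$ — then assemble $\E[\bl{v}^\top\bl{A}\bl{v}] = \sum_{i,j} A_{ij}\E[v_i v_j] = \tfrac1n\sum_i A_{ii}$. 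Either route is short, so I would present the trace-plus-symmetry version for cleanliness.
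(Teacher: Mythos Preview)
Your argument is correct. The first equality is handled the same way as the paper (trace plus linearity), but for the second equality you take a different route: you use the $\mathbb{O}(n)$-invariance of the law of $\bl{v}$ to force $\E[\bl{v}\bl{v}^\top]=c\,\I_n$ and then fix $c$ via the trace. The paper instead writes $\bl{v}\stackrel{d}{=}\bl{g}/\|\bl{g}\|$ for $\bl{g}\sim\N(0,\I_n)$ and uses the independence of $\bl{g}/\|\bl{g}\|$ and $\|\bl{g}\|$ to compute $\E[\bl{v}\bl{v}^\top]=\E[\bl{g}\bl{g}^\top]/\E[\|\bl{g}\|^2]=\I_n/n$ (and also mentions the exchangeability-of-Haar-rows argument from Lemma~\ref{lem:riri-cov}). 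Your symmetry argument is self-contained and avoids invoking the Gaussian direction/norm independence fact; the paper's Gaussian trick is a one-line computation once that fact is granted. Either is perfectly adequate here.
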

\begin{proof}
The first equality follows from the cyclic property of trace. The second can be seen to follow from the observation used in the proof of Lemma~\ref{lem:riri-cov} that $\E[\bl{o}_i \bl{o}_i^\top] = \frac{1}{p} \bl{I}_p$ and the fact that columns of a Haar distributed matrix are uniform on the sphere (see e.g. \cite{Meckes_2019}). Alternatively, recall that for $\bl{g} \sim \N(0, \I_n)$, $\frac{\bl{g}}{\|\bl{g}\|} \indep \|\bl{g}\|$ from \cite[Theorem~2.7.1]{mardia2024multivariate}. Hence
\begin{equation*}
    \E[\bl{v}\bl{v}^\top] = \E[\bl{g}\bl{g}^\top / \|\bl{g}\|^2] = \E[\bl{g}\bl{g}^\top] / \E[\|\bl{g}\|^2] = \I_n/n. 
\end{equation*}
\end{proof}
\begin{lemma}\label{lem:vector-prod}
    Let $\bl{v}_n$ be a sequence of random vectors, where $\bl{v}_n$ is uniformly distributed on $S^{n - 1}$. Let $\bl{u}_n$ be another sequence of random vectors independent of $\bl{v}_n$, where $\limsup \|\bl{u}_n\| < C$ for some constant $C$ almost surely. Then $\bl{v}_n^\top \bl{u}_n \xrightarrow{a.s.} 0$. 
\end{lemma}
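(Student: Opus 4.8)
The plan is to condition on $\bl{u}_n$, reduce to a deterministic concentration statement for a linear functional of a uniform point on the sphere, and then invoke the first Borel--Cantelli lemma. Concretely, for a fixed vector $\bl{u}$ with $\|\bl{u}\| \le C$ and $\bl{v}$ uniform on $S^{n-1}$, the map $\bl{v}\mapsto \bl{v}^\top\bl{u}$ is $\|\bl{u}\|$-Lipschitz on the sphere and is symmetric about $0$, so both its mean and median vanish. The sphere concentration already used in the proof of Lemma~\ref{lem:hw} (see \cite{schechtmannote}) then gives $\P\bigl(|\bl{v}^\top\bl{u}| > \epsilon\bigr) \le 2\exp\bigl(-cn\epsilon^2/\|\bl{u}\|^2\bigr) \le 2\exp\bigl(-cn\epsilon^2/C^2\bigr)$ for an absolute constant $c>0$. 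If one prefers a self-contained argument, note that $\bl{v}^\top\bl{u}$ has the law of $\|\bl{u}\|$ times the first coordinate of a uniform point on $S^{n-1}$, whose fourth moment is $3/(n(n+2))$ (cf.\ Lemma~\ref{lem:expqform} for the second moment), so Markov's inequality already yields the weaker but still summable bound $\P\bigl(|\bl{v}^\top\bl{u}| > \epsilon\bigr) \le 3C^4/(n(n+2)\epsilon^4)$.

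Next, fix $\epsilon > 0$ and set $B_n = \{|\bl{v}_n^\top\bl{u}_n| > \epsilon\}\cap\{\|\bl{u}_n\| \le C\}$. Since $\bl{u}_n$ is independent of $\bl{v}_n$, conditioning on $\bl{u}_n$ and applying the estimate of the preceding paragraph gives $\P(B_n) \le 2\exp(-cn\epsilon^2/C^2)$, which is summable in $n$; hence $\P(\limsup_n B_n) = 0$ by the first Borel--Cantelli lemma. Separately, the hypothesis $\limsup_n\|\bl{u}_n\| < C$ almost surely means that almost surely there is a (random, sequence-dependent) index $N$ with $\|\bl{u}_n\| \le C$ for every $n \ge N$, so on that event the events $\{|\bl{v}_n^\top\bl{u}_n| > \epsilon\}$ and $B_n$ coincide for all $n \ge N$. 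Combining the two facts, almost surely $|\bl{v}_n^\top\bl{u}_n| \le \epsilon$ for all large $n$, i.e.\ $\limsup_n |\bl{v}_n^\top\bl{u}_n| \le \epsilon$ almost surely. Intersecting the probability-one events corresponding to $\epsilon = 1/k$, $k \in \mathbb{N}$, yields $\bl{v}_n^\top\bl{u}_n \xrightarrow{a.s.} 0$.

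No step is genuinely hard, but the point that needs attention is that $\limsup_n\|\bl{u}_n\| < C$ is only an almost-sure tail statement rather than a uniform-in-$\omega$ bound, so one cannot bound $\P(|\bl{v}_n^\top\bl{u}_n| > \epsilon)$ outright; the event $\{\|\bl{u}_n\| > C\}$ must be absorbed via its almost-sure finiteness, not via a summability estimate. Relatedly, the concentration bound on the complementary event must decay faster than $1/n$ for Borel--Cantelli to close the argument, which is why a bare second-moment (Chebyshev) bound, giving only $\O(1/n)$, does not suffice and one needs either the exponential estimate or at least the fourth-moment estimate.
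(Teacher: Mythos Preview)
Your proof is correct and follows the same overall template as the paper's: condition on $\bl{u}_n$, establish a summable tail bound for $\bl{v}_n^\top\bl{u}_n$, and invoke Borel--Cantelli. The paper obtains its tail bound differently, writing $\bl{v}_n = \bl{g}/\|\bl{g}\|$ with $\bl{g}\sim\N(0,\I_n/n)$, using that conditionally $\bl{g}^\top\bl{u}_n\sim\N(0,\|\bl{u}_n\|^2/n)$ to get a Gaussian tail, and then handling the factor $\|\bl{g}\|\to 1$ via the strong law and Slutsky. Your direct appeal to Lipschitz concentration on the sphere (and the elementary fourth-moment alternative) avoids the extra Slutsky step and is arguably tidier; you are also more careful than the paper in explaining why the merely almost-sure tail bound $\limsup_n\|\bl{u}_n\|<C$ does not obstruct Borel--Cantelli, by intersecting with $\{\|\bl{u}_n\|\le C\}$ and noting that this event eventually holds.
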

\begin{proof}
We replace $\mathbf{v}_n$ with $\frac{\mathbf{g}}{\|\mathbf{g}\|}$ where $\mathbf{g} \sim \mathsf{N}(0, \mathbf{I}_p/p)$. Then conditional on $\mathbf{u}_n$, we have $\mathbf{g}^\top \mathbf{u}_n \sim \mathsf N(0, \|\mathbf{u}_n\|^2/p)$. We then use Mill's inequality to show $\mathbf{g}^\top \mathbf{u}_n \xrightarrow{a.s.} 0$:
\begin{align*}
    \mathbb P\left(|\mathbf{g}^\top \mathbf{u}_n| > t \mid \mathbf{u}_n \right) &\leq \sqrt{\frac{2}{\pi}} \frac{\|\mathbf{u}_n\|}{t \cdot \sqrt{p}} \exp\left(-t^2/2 \cdot \frac{p}{\|\mathbf{u}_n\|^2}\right).
\end{align*}
Since $\limsup_{n \to \infty} \|\mathbf{u}_n\|$ is a.s. bounded, the bound above is summable in $n$, so by Borel-Cantelli, the convergence is almost sure. An application of Slutsky finishes, since we know $\|\mathbf{g}\| \xrightarrow{a.s.} 1$ by the strong Law of Large Numbers, and hence $\mathbf{g}^\top \mathbf{u}_n / \|\mathbf{g}\| \to 0$.
\end{proof}

\subsection{Proof of Theorem~\ref{thm:orig-gcv}}
First recall the statement of Theorem~\ref{thm:orig-gcv}:
Under the stated assumptions,
\begin{equation*}
    \GCV_n(\lambda) - \frac{r^2 (v_{\bl{D}}(-\lambda) - \lambda v'_{\bl{D}}(-\lambda)) + \sigma^2\gamma v'_{\bl{D}}(-\lambda)}{\gamma v_{\bl{D}}(-\lambda)^2} \xrightarrow{a.s.} 0.
\end{equation*}

 We analyze the numerator and denominator separately. The latter is simple: 
\begin{align*}
    (1 - \Tr(\bl{S}_{\lambda})/n)^2  &= \left(1 - \Tr((\bl{D}^\top \bl{D} + \lambda \I)^{-1}\bl{D}^\top\bl{D})/n\right)^2 = \left(\frac{1}{n}\sum_{i = 1} ^n \frac{\lambda}{\lambda + D_{ii}^2}\right)^2 = \gamma^2 \lambda^2 v_{\bl{D}}(-\lambda)^2
\end{align*}
The numerator only requires slightly more work. Recall $\bl{y} = \bl{X} \bs\beta + \bs\epsilon$. Furthermore, note that the GCV numerator can be rewritten as below:
\begin{align*}
    \frac{1}{n}\bl{y}^\top (\I - \bl{S}_{\lambda})^2 \bl{y} = \underbrace{\frac{1}{n} \bs\beta^\top \bl{X}^\top (\bl{I} - \bl{S}_{\lambda})^2 \bl{X}\bs\beta}_{T_1} + 2 \underbrace{\frac{1}{n} \bs\epsilon^\top (\I - \bl{S}_{\lambda})^2 \bl{X}\bs\beta}_{T_2} + \underbrace{\frac{1}{n}\bs\epsilon^\top (\I - \bl{S}_{\lambda})^2 \bs\epsilon}_{T_3}.
\end{align*}
We handle each term. First, $T_1$ simplifies easily into a quadratic form involving the uniformly random vector $\mathbf{O}\boldsymbol{\beta}$ which is independent of $\mathbf{D}$. This will later allow us to apply Lemma~\ref{lem:hw}:
\begin{align*}
    T_1 &= \frac{1}{n} (\bl{O}\bs\beta)^\top \bl{D}^\top (\I - \bl{D}(\bl{D}^\top \bl{D} + \lambda\I)^{-1} \bl{D}^\top)^2 \bl{D} (\bl{O}\bs\beta) \\ \intertext{The expectation of this quantity is as follows:}
    \E[T_1] &= \frac{\|\bs\beta\|^2}{n} \frac{\Tr(\bl{D}^\top (\bl{I} - \bl{D} (\DtD + \lambda \I)^{-1} \bl{D}^\top)^2\bl{D})}{p} \\
    &= \frac{\|\bs\beta\|^2}{n} \frac{1}{p} \sum_{i = 1}^{p} D_{ii}^2\left(1 - \frac{D_{ii}^2}{D_{ii}^2 + \lambda}\right)^2 \\
    &= \frac{\|\bs\beta\|^2}{n} \frac{\lambda^2}{p} \sum_{i = 1} ^p \frac{D_{ii}^2}{(D_{ii}^2 + \lambda)^2} \\
    &= \frac{\|\bs\beta\|^2}{n} \lambda^2 \left(m_{\bl{D}}(-\lambda) - \lambda m'_{\bl{D}}(-\lambda)\right) = \frac{\|\bs\beta\|^2}{n} \frac{\lambda^2}{\gamma} \left(v_{\bl{D}}(-\lambda) - \lambda v'_{\bl{D}}(-\lambda)\right)
\end{align*}
Now, letting $\bl{P} = \bl{D}^\top (\bl{I} - \bl{D} (\DtD + \lambda \I)^{-1} \bl{D}^\top)^2\bl{D}$, Hanson-Wright (Lemma~\ref{lem:hw}) implies
\begin{align*}
    \P(|T_1 - \E[T_1]| > t) \leq  2 \exp \left(- c \min\left(\frac{p^2 t^2}{2K^4 \|\bl{P}\|_F^2}, \frac{pt}{K^2 \| \bl{P} \|} \right)\right) = 2 \exp \left( p \min\left(\frac{t^2}{2K^4 \frac{1}{p} \|\bl{P}\|_F^2}, \frac{t}{K^2 \| \bl{P} \|} \right)\right)
\end{align*}
Note that
\begin{align*}
    \bl{P} = \mathrm{diag} \left( \frac{\lambda^2 D_{ii}^2}{(D_{ii}^2 + \lambda)^2} \right)_{i = 1} ^p
\end{align*}
Hence
\begin{align*}
    \frac{1}{p} \|\bl{P}\|_F^2 &= \frac{1}{p} \sum_{i = 1} ^p \left(\frac{\lambda^2 D_{ii}^2}{(D_{ii}^2 + \lambda)^2}\right)^2 = \frac{1}{p} \sum_{i = 1} ^p \left(\frac{\lambda^2}{(D_{ii}^2 + \lambda)^2}\right)^2 D_{ii}^4 \leq \|\bl{D}\|_{\op}^4 \\
    \|\bl{P}\|_{\op} &= \frac{\lambda^2 D_{11}^2}{(D_{11}^2 + \lambda)^2} \leq \|\bl{D}\|^2
\end{align*}
By assumption, both terms are almost surely bounded in the limit. Recalling that $p(n) / n \to \gamma$, the above bound is summable in $n$ and hence Borel-Cantelli implies almost sure convergence.

To handle the second term, we use a slightly different method. 
\begin{align*}
        T_2 &= \frac{1}{n} {\bs\epsilon}^\top (\bl{I} - \bl{S}_{\lambda})^2 \bl{X}\bs\beta \\
        &= \frac{1}{n} (\bl{O} \bs\beta)^\top \underbrace{\bl{D}^\top \bl{Q} (\bl{I} - \bl{S}_{\lambda})^2 {\bs\epsilon}}_{\bl{T}_4}
\end{align*}
Note that $\tilde{\bs\beta} = \bl{O} \bs\beta$ is a uniformly random vector of norm $\|\bs\beta\|$. We replace this with $\|\bs\beta\| \frac{\bl{g}}{\|\bl{g}\|}$ where $\bl{g} \sim \N(0, \I_p/p)$. Then conditional on $\bl{D},\bs{\epsilon}$, we have $\frac{1}{\sqrt{n}} \bl{g}^\top \bl{T}_4 \sim \N(0, \|\bl{T}_4\|/(pn))$. Furthermore note $\|\bl{T}_4\| \leq \|\bl{D}^\top\| \|\bl{Q}\| \|\I - \bl{S}_{\lambda}\|^2 \|\bs\epsilon\| \leq \|\bl{D}^\top\|\|\bs\epsilon\|$. Some standard tail bounds complete this.
\begin{align*}
    \frac{1}{n}  \|\bs\beta\| \frac{\bl{g}^\top}{\|\bl{g}\|} \bl{T}_4 &= \frac{\|\bs\beta\|}{\sqrt{n}} \frac{1}{\|\bl{g}\|} \left(\frac{1}{\sqrt{n}} \bl{g}^\top \bl{T}_4\right) = r \cdot \frac{1}{\|\bl{g}\|} \frac{1}{\sqrt{n}} \bl{g^\top \bl{T}_4}
\end{align*}
Hence
\begin{align*}
    \P\left(|\frac{1}{\sqrt{n}} \bl{g^\top \bl{T}_4}| > t \mid \bl{D}, \bs\epsilon\right) &\leq \sqrt{\frac{2}{\pi}} \frac{\|\bl{T}_4\|}{t \cdot \sqrt{pn}} \exp\left(-t^2/2 \cdot \frac{pn}{\|\bl{T}_4\|^2}\right).
\end{align*}
We claim that $\limsup_{n \to \infty} \|\bl{T}_4\|/\sqrt{n}$ is bounded. This follows from the bound above, the law of large numbers applied to $\bs\epsilon$, and the assumption on $\bl{D}$. Hence the bound above is summable in $n$ once more, so by Borel-Cantelli, the convergence is almost sure. An application of Slutsky finishes, since we know $\|\bl{g}\| \xrightarrow{a.s.} 1$ by the strong Law of Large Numbers. The convergence for $T_3$ follows directly from \cite[Lemma~C.3]{dobriban2015highdimensional}

\subsection{Proof of Theorem~\ref{thm:ridge}}\label{app:thm2-proof}
\begin{proof}[Proof of Theorem~\ref{thm:ridge}]
In order to understand how one should tune the regularization parameter in this problem, we first must understand the out of sample risk for a given value of $\lambda$. 
The risk admits a decomposition into three terms:
\begin{align*}
    \frac{1}{n'} \E \left[\left\|\tbl{X}(\bs\beta - \hat{\bs\beta})\right\|_2^2 \mid \bl{X}, \bl{y} \right] &= \frac{1}{n'} \cdot \frac{\E[\Tr(\tbl{D}^\top \tbl{D})]}{p} \left\|\bs\beta - \hat{\bs\beta}\right\|_2^2 \\
    &= \frac{\E[\Tr(\tbl{D}^\top \tbl{D})]}{n' p} \|(\I - (\XtX + \lambda \I)^{-1} \XtX)\bs\beta - (\XtX + \lambda\I)^{-1} \bl{X}^\top \bs\epsilon\|_2^2 \\
    &= \frac{n\E[\Tr(\tbl{D}^\top \tbl{D})]}{n' p}\left[\underbrace{ \frac{1}{n} \bs\beta (\I - (\XtX + \lambda\I)^{-1} \XtX)^2 \bs\beta}_{T_1} \right.\\
    &\qquad \left.+ \underbrace{\frac{1}{n} \bs\epsilon^\top \bl{X} (\XtX + \lambda\I)^{-2} \bl{X}^\top \bs\epsilon}_{T_2} - \underbrace{2 \bs\epsilon^\top \bl{X} (\XtX + \lambda\I)^{-2} \XtX \bs\beta}_{T_3} \right]
\end{align*}
\begin{remark}\label{rmk:scaling-explanation}
    The normalization assumption (Assumption~\ref{item:scaling}) ensures that the prefactor is finite. Without further loss of generality, we can assume it is 1. In fact it is sufficient for all proofs to simply assume that $\limsup \frac{n\E[\Tr(\tbl{D}^\top \tbl{D})]}{n' p} \leq C$, but assuming it is constant simplifies notation.

    For intuition why this term is bounded, note that $\Tr(\tbl{D}^\top \tbl{D}) = \Tr(\tbl{X}^\top \tbl{X})$, which is the covariance matrix of $n'$ samples normalized by $\sqrt{n}$ (the reason for this is given in Remark~\ref{rmk:scaling-1} -- since $\bs\beta$ is scaled up by $\sqrt{n}$ all data must be scaled down by $\sqrt{n}$). Hence we expect this trace to be order $n'p/n$, which exactly cancels. 

\end{remark}
The first term corresponds to bias, the second to the variance, and the third is negligible. 
For the first term,
\begin{align*}
    T_1 &= n^{-1}\bs\beta(\I - (\XtX + \lambda\I)^{-1} \XtX)^2\bs\beta \\
    &= n^{-1}(\bl{O}\bs\beta)^{\top} \left[\I - (\DtD + \lambda\I)^{-1}\DtD \right]^{2} (\bl{O}\bs\beta) \\
    &= n^{-1}(\bl{O}\bs\beta)^{\top} \left( \lambda (\DtD + \lambda\I)^{-1} \right)^{2} (\bl{O}\bs\beta)
\end{align*}
Again let $\bl{b} = (\bl{O}\bs\beta)/\|\bs\beta\|$ and $\bl{P} = \left( \lambda (\DtD + \lambda\I)^{-1} \right)^{2}$, and note $\Tr(\bl{P}) = \sum_{i = 1} ^p \frac{\lambda^2}{(D_{ii}^2 + \lambda)^2}$ and $\|\bl{P}\|_F^2 = \sum_{i = 1} ^p \frac{\lambda^4}{(D_{ii}^2 + \lambda)^4}$. Furthermore, one has $\E[\bl{b}^\top \bl{P} \bl{b}] = \frac{1}{p} \Tr(\bl{P})$ using \ref{lem:expqform}, and clearly $\|\bl{P}\| \leq 1$.
We now apply Lemma~\ref{lem:hw} to show concentration:
\begin{align*}
    \P\left(\left|\bl{b}^\top \bl{P} \bl b - \frac{1}{p} \Tr(\bl{P}) \right| \geq t\right) &\leq 2\exp\left(-C \min\left(\frac{p^2 t^2}{2K^4 \|\bl{P}\|^2_F}, \frac{pt}{K^2}\right)\right) \\
    &= 2\exp\left(-C p\min\left(\frac{t^2}{2K^4 \tfrac{1}{p} \|\bl{P}\|^2_F}, \frac{t}{K^2}\right)\right)
\end{align*}
Note that $\frac{1}{p} \|\bl{P}\|^2_F$ is always bounded above by $1$, so this bound can never be made vacuous by a certain setting of $\bl{D}$. This bound is summable in $n$ and thus we have almost sure convergence of the difference to zero.

This yields pointwise convergence for the bias. To strengthen this to uniform convergence, we show that the derivative of the difference $\bl{b}^\top \bl{P} \bl{b} - \frac{1}{p} \Tr(\bl{P})$ is almost surely bounded on $[\lambda_1, \lambda_2]$:
\begin{align*}
    \frac{\mathrm{d}}{\mathrm{d}\lambda}\bl{b}^\top \bl{Pb} = \frac{\d }{\d\lambda}\left[\frac{1}{n} \sum_{i = 1} ^p (\bl{o}_i^\top \bs\beta)^2 \frac{\lambda^2}{(D_{ii}^2 + \lambda)^2} \right] &= \frac{1}{n} \sum_{i = 1} ^p (\bl{o}_i^\top \bs\beta)^2 \frac{2\lambda (D_{ii}^2 + \lambda)^2 + 2(D_{ii}^2 + \lambda)(\lambda^2)}{(D_{ii}^2 + \lambda)^4} \\
    &\leq \frac{1}{n} \frac{2\lambda (D_{11}^2 + \lambda)^2 + 2(D_{11}^2 + \lambda)(\lambda^2)}{(\lambda)^4} \sum_{i = 1} ^p (\bl{o}_i^\top \bs\beta)^2 \\
    &= \frac{1}{n} \frac{2\lambda (D_{11}^2 + \lambda)^2 + 2(D_{11}^2 + \lambda)(\lambda^2)}{(\lambda)^4}
\end{align*}
which is indeed almost surely bounded  on $[\lambda_1, \lambda_2]$ in the limit by assumption on $\|\bl{D}\|$. 
Similarly, $\frac{\d }{\d \lambda} (\frac{1}{p} \Tr(\bl{P}))$ is also bounded on this interval by the same quantity, meaning the difference is Lipschitz. Hence one can discretize the interval $[\lambda_1, \lambda_2]$ into a sufficiently fine grid. Pointwise convergence on every point holds at every point on the grid, then the Lipschitz condition assures that the convergence is uniform. 

For the third term $T_3$, we rewrite it as
\begin{align*}
    2(\bs\epsilon/\sqrt{n})^\top \bl{Q}\bl{D} (\DtD + \lambda\I)^{-2} \DtD \cdot (\bl{O}\bs\beta/\sqrt{n})
\end{align*}
and then apply Lemma~\ref{lem:vector-prod} to see that this converges almost surely to zero. We can likewise control the derivative uniformly over the interval and again obtain almost-sure convergence. 

Lastly, for the second term $T_2$, we first compute 
\begin{equation}
    \E \left[\frac{1}{n} \bs\epsilon^\top \bl{X} (\XtX + \lambda\I)^{-2} \bl{X}^\top \bs\epsilon\right] = \frac{\sigma^2}{n} \sum_{i = 1} ^{n \wedge p} \frac{D_{ii}^2}{(D_{ii}^2 + \lambda)^{2}}.
\end{equation}
Then by \cite[Lemma~C.3]{dobriban2015highdimensional}, this term satisfies
\begin{equation}
    \E \left[\frac{1}{n} \bs\epsilon^\top \bl{X} (\XtX + \lambda\I)^{-2} \bl{X}^\top \bs\epsilon\right] - \frac{1}{n} \bs\epsilon^\top \bl{X} (\XtX + \lambda\I)^{-2} \bl{X}^\top \bs\epsilon \xrightarrow{a.s.} 0.
\end{equation}
The derivative is again uniformly controlled, producing uniform convergence once more.    

\end{proof}

\subsection{Proof of Corollary~\ref{cor:empirical-unif}}
\begin{proof}
Under the assumptions of Theorem~\ref{thm:orig-gcv}, for any consistent estimators $(\hat{r}^2, \hat{\sigma}^2)$ of $(r^2, \sigma^2)$, respectively, one has, for any $\lambda_0 > 0$,
\begin{equation}\label{eq:plugin-unif}
    \sup_{\lambda > \lambda_0}\left|(\hat{r}^2 - r^2) \left(\frac{\lambda^2}{\gamma} v'_{\bl{D}}(-\lambda) + \frac{\gamma - 1}{\gamma}\right) + (\hat{\sigma}^2 - \sigma^2) ( v_{\bl{D}}(-\lambda) - \lambda v_{\bl{D}}'(-\lambda))\right| \xrightarrow{p} 0.
\end{equation}
If $\hat{r}^2, \hat{\sigma}^2$ are strongly consistent, then naturally the convergence is almost sure. This is immediate from the fact that
\begin{align*}
    \lambda^2 v'_{\bl{D}} (-\lambda) = \frac{1}{n} \sum_{i = 1} ^n \frac{\lambda^2}{(D_{ii}^2 + \lambda)^2} &\leq 1 \\
    v_{\bl{D}}(-\lambda) - \lambda v'_{\bl{D}}(-\lambda) = \frac{1}{n} \sum_{i = 1} ^{n \wedge p} \frac{D_{ii}^2}{(D_{ii}^2 + \lambda)^2} &\leq \frac{1}{4\lambda}.
\end{align*}
Combining Eq.~\eqref{eq:plugin-unif} with Theorem~\ref{thm:ridge} then completes the corollary.
\end{proof}

\subsection{Proof of Lemma~\ref{lem:gcv-as}}
This was proven in the course of Theorem~\ref{thm:orig-gcv}, as the training error is nothing but the numerator of the original GCV. 

\subsection{Proof of Corollary~\ref{cor:consistent}}

\subsubsection{Nondegeneracy on the spectrum of $\bl{D}$}\label{app:nondegen}
The exact nondegeneracy condition we require is composed of two portions:
\begin{enumerate}
    \item Define the events $A_n = \bigcup_{i = 1} ^L \{a_i = 0\}$ and $B_n = \bigcup_{i = 1} ^L \{b_i = 0\}$. Then $\mathds{1}(A_n \cup B_n) \xrightarrow{a.s.} 0$. 
    \item There exists $i_* \in \{2, \dots, L\}$ such that $\liminf |a_{i_*} b_1 - a_1 b_{i_*}| > C$ almost surely for some $C > 0$. 
\end{enumerate}
The need for the first condition is clear: our estimator would not even be defined if the entire spectrum were zero; in fact, the problem would be impossible, as the entire data matrix would be zero. The second is more complex, and we give a detailed discussion on its necessity before giving an explicit proof. 

Note furthermore that each of these terms can be observed when using the procedure - this gives the user a good way to check if this assumption is likely satisfied in practice. 

As motivation, we first discuss the simpler case when $L = 2$, as this has connections to the assumptions required for consistent estimation in \cite{li2023spectrumaware}. In this setting, the estimator for $\sigma^2$ becomes
\begin{align*}
    \frac{\left( \frac{t_2}{a_2} - \frac{t_1}{a_1} \right) \left( \frac{b_2}{a_2} - \frac{b_1}{a_1}\right)}{\left( \frac{b_2}{a_2} - \frac{b_1}{a_1} \right)^2} &= \frac{\left( \left(\frac{b_2}{a_2} - \frac{b_1}{a_1}\right)\sigma^2 + o(1) \cdot \left(\frac{1}{a_2} - \frac{1}{a_1} \right) \right) \left( \frac{b_2}{a_2} - \frac{b_1}{a_1}\right)}{\left( \frac{b_2}{a_2} - \frac{b_1}{a_1} \right)^2} \\
    &= \sigma^2 + o(1)\frac{a_1 - a_2}{a_1b_2 - b_1a_2}. 
\end{align*}
where the first line comes from Lemma~\ref{lem:gcv-as}. Similarly, the estimator for $r^2$ is then $r^2 + o(1) \frac{b_2 - b_1}{a_1 b_2 - b_1 a_2}$. From this and the fact that the $a_i$ and $b_i$ are all bounded above, we see that this error term is asymptotically negligible provided $a_1b_2 - b_1a_2$ is bounded away from zero in the limit. In the limiting case where $\lambda_2 = \infty$, one has $a_2 = \frac{1}{p} \sum_{i = 1} ^p D_{ii}^2$ and $b_2 = 1$, meaning we require
\begin{equation}\label{eq:nondegen-1}
    \liminf \left| a_1 b_2 - a_2 b_1  \right| = \liminf \left| \frac{1}{p} \sum_{i = 1} ^p \frac{\lambda^2 D_{ii}^2}{(D_{ii}^2 + \lambda)^2} - \left(\frac{1}{p} \sum_{i = 1} ^p D_{ii}^2\right) \left(\frac{1}{n} \sum_{i = 1} ^n \frac{\lambda^2}{(D_{ii}^2 + \lambda)^2} \right) \right| > c > 0
\end{equation}
almost surely for some $c$. This is exactly the second part of our nondegeneracy assumption. Furthermore, this is the exact analog of \cite[Assumption~7]{li2023spectrumaware} in our setting. The above argument thus establishes that consistent estimation is possible if one assumes equation~\eqref{eq:nondegen-1}. 

We do not believe this condition to be simply technical. Consider the setting where $n \leq p$ and every $D_{ii} = 1$ for $i \leq n$ (meaning $D_{ii} = 0$ for $i > n$). Let $\tilde{\bs\beta} = \bl{O}{\bs\beta}$. Then one is given $\bl{y} = \bl{Q}^\top\tilde{\bs\beta}_{[1:n]} + \bs\epsilon$ and the goal is to infer $r^2 = \|\bs\beta\|_2^2/\sqrt{n} = \|\tilde{\bs\beta}\|_2^2/\sqrt{n}$ and $\sigma^2$. Every entry $y_i$ is now $\bl{q}_i^\top \tilde{\bs\beta}_{[1:n]} + \epsilon_i$; the first term has mean zero and variance approximately $r^2$, and the second has mean zero and variance $\sigma^2$, and it essentially becomes impossible to decouple the effects of $r^2$ and $\sigma^2$. In this setting, the term above is exactly zero, reflecting that the estimating equations become linearly dependent. That said, we believe this issue should not affect practical use cases; the term we require to be bounded away from zero can be computed by the practitioner directly, and while it is impossible to check if some quantity is bounded away in the limit, they can observe its magnitude. 

\subsubsection{Proof}
Having now overviewed a specific case of our method that yields consistent estimation through the use of an additional assumption already used in the literature, we now turn to the general setting. Note that in the general setting, the condition is weaker -- we only require that $\liminf |a_1 b_i - a_i b_1| > C > 0$ holds for \textit{one} of the $i$, meaning one loses nothing by using multiple $\lambda_i$, and in fact in practice we observe this results in improved stability. 

Here the estimators take on the following forms:
\begin{align*}
    \hat{\sigma}^2 &= \sigma^2 + o(1) \frac{\sum_{i = 1} ^L (a_i^{-1} - a_1^{-1})(b_ia_i^{-1} - b_1a_1^{-1})}{\sum_{i = 1} ^L (b_ia_i^{-1} - b_1a_1^{-1})^2} \\
    \hat{r}^2 &= r^2 + o(1) \frac{\sum_{i = 1} ^L (b_i^{-1} - b_1^{-1})(a_ib_i^{-1} - a_1b_1^{-1})}{\sum_{i = 1} ^L (a_ib_i^{-1} - a_1b_1^{-1})^2}.
\end{align*}
As a result, we will require that the coefficient of both error terms is uniformly bounded, which will yield that our estimators are consistent, meaning it suffices to show
\begin{equation}\label{eq:estimator-error-coef}
    \limsup \max \left(\left|\frac{\sum_{i = 1} ^L (a_i^{-1} - a_1^{-1})(b_ia_i^{-1} - b_1a_1^{-1})}{\sum_{i = 1} ^L (b_ia_i^{-1} - b_1a_1^{-1})^2}\right|, \left| \frac{\sum_{i = 1} ^L (b_i^{-1} - b_1^{-1})(a_ib_i^{-1} - a_1b_1^{-1})}{\sum_{i = 1} ^L (a_ib_i^{-1} - a_1b_1^{-1})^2} \right| \right) < C
\end{equation}
almost surely for some $C$. 

Now, note that
\begin{align*}
    a(\lambda) &= \frac{\lambda^2}{\gamma} \left(v_{\bl{D}}(-\lambda) - \lambda v'_{\bl{D}}(-\lambda)\right) = \frac{1}{p} \sum_{i = 1} ^p D_{ii}^2 \left(1 - \frac{D_{ii}^2}{D_{ii}^2 + \lambda} \right)^2 \\
    b(\lambda) &= \lambda^2 v'_{\bl{D}}(-\lambda) = \frac{1}{n} \sum_{i = 1} ^n \frac{\lambda^2}{(D_{ii}^2 + \lambda)^2}
\end{align*}
are both increasing in $\lambda$, and thus $a_1$ is the smallest of the $a_i$; likewise for $b_1$. Furthermore, note that the $a_i$ are bounded above by $\|\bl{D}\|_{\op}$ which is then bounded almost surely by assumption; the $b_i$ are bounded above by $1$. 

We then calculate as follows:
\begin{align*}
    \frac{\left| \sum_{i = 1} ^L (a_i^{-1} - a_1^{-1})(b_ia_i^{-1} - b_1a_1^{-1}) \right| }{\sum_{i = 1} ^L (b_ia_i^{-1} - b_1a_1^{-1})^2} &\leq \frac{\left| \sum_{i = 1} ^L (a_i^{-1} - a_1^{-1})(b_ia_i^{-1} - b_1a_1^{-1}) \right| }{(b_{i_*}a_{i_*}^{-1} - b_1a_1^{-1})^2} \cdot \frac{(a_1a_{i_*})^2}{(a_1a_{i_*})^2} \\
    &= \frac{\left| \sum_{i = 1} ^L (a_{i_*} (a_1a_{i}^{-1}) - a_{i_*})(b_ia_{i_*} (a_1a_i^{-1}) - b_1a_{i_*}) \right| }{(b_ia_{1} - b_1a_{i_*})^2} \\
    &\leq \frac{1}{(b_ia_{1} - b_1a_{i_*})^2} \sum_{i = 1} ^L \left| (a_{i_*} (a_1a_{i}^{-1}) - a_{i_*}) \right| \left| (b_ia_{i_*} (a_1a_i^{-1}) - b_1a_{i_*}) \right|
\end{align*}
Note now that since $a_1$ is the smallest of the $a_i$, $a_1a_i^{-1} \leq 1$ for all $i$; hence, combined with the fact that the $a_i$ and $b_i$ are all bounded above in the limit, the final term is also controlled in the limit.
\begin{align*}
    \limsup \frac{\left| \sum_{i = 1} ^L (a_i^{-1} - a_1^{-1})(b_ia_i^{-1} - b_1a_1^{-1}) \right| }{\sum_{i = 1} ^L (b_ia_i^{-1} - b_1a_1^{-1})^2} &\stackrel{a.s.}{\leq} \frac{1}{C'} L \cdot C'' \cdot 2 \cdot C'' 
\end{align*}
where $C'$ is the constant from the non-degeneracy assumption and $C''$ is the constant from Assumption~\ref{item:bound-op}.

The same computation can be performed for the other term in \eqref{eq:estimator-error-coef} which proceeds exactly as above, but with the $a_i$'s and $b_i$'s exchanged. The result is 
\begin{align*}
    \limsup \frac{\left| \sum_{i = 1} ^L (b_i^{-1} - b_1^{-1})(a_ib_i^{-1} - a_1b_1^{-1}) \right| }{\sum_{i = 1} ^L (a_ib_i^{-1} - a_1b_1^{-1})^2} &\stackrel{a.s.}{\leq} \frac{1}{C'} L \cdot 2 \cdot C'',
\end{align*}
as desired. 

\subsection{Proof of Theorem~\ref{thm:align-gcv}}

\begin{proof}
Most computations are analogous to the proof of Theorem~\ref{thm:ridge}. We first compute $\E[\tbl{X}^\top \tbl{X} \mid \bl{X}, \bl{y}]$ using \cite[Lemma~4]{rangan2019vector}. We find that it is equal to $\sum_{i \in \mathcal{J}_c} (\mathfrak{d}_{i}^2 - \mathfrak{d}_\bulk^2) \bl{o}_i \bl{o}_i^\top + \mathfrak{d}_\bulk^2 \I$. Call this quantity $\bl{C}$ for convenience. We then move to computing the risk. 
\begin{equation}
    \bs\beta - \hat{\bs\beta} = (\I - (\XtX + \lambda\I)^{-1} \XtX) \bs\beta - (\XtX + \lambda\I)^{-1} \bl{X}^\top \bs\epsilon.
\end{equation}
Meanwhile
\begin{equation}
    \E \left[ \|\tbl{X}(\bs\beta - \hat{\bs\beta})\|_2^2  \right]= (\bs\beta - \hat{\bs\beta})^\top \left( \sum_{i \in \mathcal{J}_c} (\mathfrak{d}_{i}^2 - \mathfrak{d}_\bulk^2) \bl{o}_i \bl{o}_i^\top + \mathfrak{d}_\bulk^2 \I \right) (\bs\beta - \hat{\bs\beta}).
\end{equation}
Thus as before, there are three main terms to handle. 

Variance term:
\begin{align*}
    &\bs\epsilon^\top \bl{X} (\XtX + \lambda\I)^{-1} \bl{C} (\XtX + \lambda\I)^{-1} \bl{X}^\top \bs\epsilon \\
    &= \mathfrak{d}_\bulk^2 \bs\epsilon^\top \bl{Q}^\top \bl{D} (\DtD + \lambda\I)^{-2} \bl{D}^\top\bl{Q}\bs\epsilon + \\
    &\qquad \bs\epsilon^\top \bl{Q}^\top \bl{D} (\DtD + \lambda\I)^{-1} \left( \sum_{i \in \mathcal{J}_c} (\mathfrak{d}_{ii}^2 - \mathfrak{d}^2_\bulk) \bl{e}_i\bl{e}_i^\top\right) (\DtD + \lambda\I)^{-1} \bl{D}^\top \bl{Q}\bs\epsilon.
\end{align*}
Note now that
\begin{equation}
    \frac{n}{n'}\mathfrak{d}_\bulk^2 = \frac{n}{n'} \frac{1}{p - |\mathcal{J}_c|} \sum_{i \not \in \mathcal{J}_c} \mathfrak{d}_i^2 = \frac{n \E\Tr(\tbl{D}^\top \tbl{D})}{n' p} + O(1/p) = 1 + O(1/p),
\end{equation}
where the penultimate equality is due to assumption~\ref{item:j-bounded-spec} and the last is due to the scaling assumption \ref{item:scaling}. Note that the $O(1/p)$ term converges almost surely to zero. 

Then
\begin{align*}
    &\frac{n}{n'}\mathfrak{d}_\bulk^2 \left|\frac{1}{n}\bs\epsilon^\top \bl{Q}^\top \bl{D} (\DtD + \lambda\I)^{-2} \bl{D}^\top\bl{Q}\bs\epsilon - \frac{1}{n} \E[\bs\epsilon^\top \bl{Q}^\top \bl{D} (\DtD + \lambda\I)^{-2} \bl{D}^\top\bl{Q}\bs\epsilon]\right|  \\
    &\leq \frac{}{} |\bs\epsilon^\top \bl{Q}^\top \bl{D} (\DtD + \lambda\I)^{-2} \bl{D}^\top\bl{Q}\bs\epsilon - \E[\bs\epsilon^\top \bl{Q}^\top \bl{D} (\DtD + \lambda\I)^{-2} \bl{D}^\top\bl{Q}\bs\epsilon]| + O(1/p) \xrightarrow{a.s.} 0.
\end{align*}
The same arguments as in Theorem~\ref{thm:ridge} then imply uniform convergence to zero.

On the other hand, the second term is order $1$ and hence vanishes when divided by $n$. The convergence is then uniform because the function is monotonic in $\lambda$. It is almost sure through, for instance, using \cite[Lemma~C.3]{dobriban2015highdimensional}. Note that the form of $\mathcal{V}_{\bl{X}}$ contains an extra
\begin{align*}
    \frac{\sigma^2}{n} \sum_{i \in \mathcal{J}_c} \frac{D_{ii}^2}{D^2_{ii} + \lambda} \mathfrak{d}_{i}^2
\end{align*}
term which is uniformly negligible by monotonicity and the assumption that $\mathfrak{d}_{i}^2$ is a.s. bounded in the limit. 

Bias term:
\begin{align*}
    &\bs\beta^\top (\I - (\XtX + \lambda\I)^{-1} \XtX)^\top \bl{C} (\I - (\XtX + \lambda\I)^{-1} \XtX))\bs\beta \\
    &= \mathfrak{d}_\bulk^2 (\bl{O}\bs\beta)^\top (\I - (\DtD + \lambda\I)^{-1} \DtD)^2 (\bl{O} \bs\beta) \\
    &\qquad + (\bl{O} \bs\beta)^\top \left[(\I - (\DtD + \lambda\I)^{-1}) \left[ \sum_{i \in \mathcal{J}_c} (\mathfrak{d}_i^2 - \mathfrak{d}_\bulk^2) \bl{e}_i \bl{e}_i^\top \right] (\I - (\DtD + \lambda\I)^{-1} \right] (\bl{O}\bs\beta). 
\end{align*}
We now need to handle this a bit more carefully than before, because $\bs\beta$ is no longer independent of $\bl{O}$. Note that $\bl{O}\bs\beta = \bl{O} \bs\beta' + \sum_{i \in \mathcal{J}_a} \alpha_i \bl{e}_i$. There are a few different types of terms we need to handle.
\begin{enumerate}
    \item Unaligned terms:
    \begin{align*}
        &\mathfrak{d}_\bulk^2 (\bl{O} \bs\beta') ^\top (\I - (\DtD + \lambda\I)^{-1} \DtD)^2 (\bl{O} \bs\beta') \\
        & \qquad + (\bl{O} \bs\beta)^\top \left[(\I - (\DtD + \lambda\I)^{-1}\DtD) \left[ \sum_{i \in \mathcal{J}_c} (\mathfrak{d}_i^2 - \mathfrak{d}_\bulk^2) \bl{e}_i \bl{e}_i^\top \right] (\I - (\DtD + \lambda\I)^{-1}\DtD) \right] (\bl{O}\bs\beta)
    \end{align*}
    The first term is treated equivalently to the bias in the ridge case in Theorem~\ref{thm:ridge}, where, as in the variance computation above, the $\mathfrak{d}_\bulk^2$ can be handled. This produces uniform convergence to its expectation. The second term is $O(|\mathcal{J}_c|)$ and hence vanishes when divided by $n$. It does so uniformly because it is monotone increasing in $\lambda$. 
    \item Aligned but uncoupled terms:
    \begin{align*}
        &\sum_{i \in \mathcal{J}_a \setminus \mathcal{J}_c} n\alpha_i^2 \mathfrak{d}_\bulk^2 \left(1 - \frac{D_{ii}^2}{D_{ii}^2 + \lambda}\right)
    \end{align*}
    \item Aligned and coupled terms:
    \begin{align*}
        &\sum_{i \in \mathcal{J}_a \cap \mathcal{J}_c} n\alpha_i^2 \mathfrak{d}_i^2 \left(1 - \frac{D_{ii}^2}{D_{ii}^2 + \lambda}\right)
    \end{align*}
    \item Cross terms:
    \begin{align*}
        \sum_{i \in \mathcal{J}_a} \bl{o}_i^\top \bs\beta' \sqrt{n}\alpha_i \left[(\mathfrak{d}_{\bulk}^2 + \1(i \in \mathcal{J}_c) (\mathfrak{d}_{ii}^2 - \mathfrak{d}_\bulk^2) )\left(1 - \frac{D_{ii}^2}{D_{ii}^2 + \lambda}\right)\right]
    \end{align*}
    These terms are only of order $O(\sqrt{n})$ and hence vanish - they do so uniformly because, after taking absolute value of the summand, the resulting term is monotone increasing in $\lambda$, and thus it suffices to control the value at $\lambda_2$. 

    Note that the cross terms between $\alpha_i \bl{e}_i$ and $\alpha_j \bl{e}_j$ are all identically zero and thus do not need to be considered. 
\end{enumerate}
Summing all of these together, one obtains uniform convergence to $\mathcal{B}$. Note that the statement of $\mathcal{B}$ contains an extra
\begin{equation}
    \frac{\lambda^2}{n} \sum_{i \in \mathcal{J}_c} \frac{r^2}{\gamma} \frac{\mathfrak{d}_i^2 - \mathfrak{d}_\bulk^2}{(D_{ii}^2 + \lambda)^2}
\end{equation}
term, which is uniformly negligible. 

Cross term: 
\begin{align*}
    &\bs\epsilon^\top \bl{X} (\XtX + \lambda\I)^{-1} \bl{C} (\I - (\XtX + \lambda\I)^{-1} \XtX)\bs\beta \\
    &= \bs\epsilon^\top \bl{X} (\XtX + \lambda\I)^{-1} \bl{C} (\I - (\XtX + \lambda\I)^{-1} \XtX)(\bs\beta' + \sum_{i \in \mathcal{J}_a} \alpha_i \bl{o}_i) \\
    &= \bs\epsilon^\top \bl{Q} \bl{D} (\DtD + \lambda\I)^{-1} \bl{O} \bl{C} \bl{O}^\top (\I - \DtD + \lambda\I)^{-1} \DtD) \bl{O} \bs\beta' + \\
    &\qquad \sum_{i \in \mathcal{J}_a \cap \mathcal{J}_c} \bl{q}_i^\top \bs\epsilon \frac{D_{ii}}{\lambda + D_{ii}^2} \cdot \mathfrak{d}_i^2 \cdot \frac{\lambda}{D_{ii}^2 + \lambda} \cdot \sqrt{n} \alpha_i
\end{align*}
For the first term, note that $\bl{O} \bl{C} \bl{O}^\top$ is in fact independent of $\bl{O}$. Moreover, the matrix in the middle has bounded operator norm, and thus this term vanishes after dividing by $1/n$ due to Lemma~\ref{lem:vector-prod}, where we apply the strong law of large numbers to bound $\|\bs\epsilon\|$. For the second, we note that this is almost surely, for sufficiently large $n$, uniformly bounded above by 
\begin{align*}
    C\sqrt{n} \cdot (\max_{i \in \mathcal{J}_a} \alpha_i) \cdot \max_{i \in \mathcal{J}_a \cap \mathcal{J}_c} |\bl{q}_i^\top \bs\epsilon|
\end{align*}
Recall we are dividing everything by $n$. Hence if we show that $|\bl{q}_i^\top \bs\epsilon|/\sqrt{n} \xrightarrow{a.s.} 0$, then we will be done with a union bound. We rewrite $|\bl{q}_i^\top \bs\epsilon| = \bs\epsilon^\top (\bl{q}_i \bl{q}_i^\top) \bs\epsilon$ and can now apply \cite[Lemma~C.3]{dobriban2015highdimensional} to find
\begin{align*}
    \frac{1}{n} \bs\epsilon^\top (\bl{q}_i \bl{q}_i^\top) \bs\epsilon - \frac{\sigma^2}{n} \xrightarrow{a.s.} 0.
\end{align*}
Thus $|\bl{q}_i^\top \bs\epsilon|^2/n$ converges a.s. to $0$, and thus by continuous mapping so does $|\bl{q}_i^\top \bs\epsilon|/\sqrt{n}$.

\end{proof}

\subsection{Proof of Lemma~\ref{lem:align-consistent}}
The following is taken from \cite{li2023spectrumaware}. We note that in this work, the errors are assumed Gaussian. We will mark which ones require this and how we believe they can be relaxed. First, the consistency of $\alpha_i$ is immediate from the first result of \cite[(40)]{li2023spectrumaware}, where we note that the proof does not require Gaussianity of the errors. The one step in the proof where care must be made is in controlling
\begin{equation}
    \frac{1}{p}\left\|\mathbf{O}_{\mathcal{J}}^{\top}\left(\mathbf{D}_{\mathcal{J}}^{\top} \mathbf{D}_{\mathcal{J}}\right)^{-1} \mathbf{D}_{\mathcal{J}}^{\top} \mathbf{Q} \varepsilon\right\|_2^2,
\end{equation}
but we note that this can be done through \cite[Lemma~C.3]{dobriban2015highdimensional}. 

Then the reduced model is right-rotationally invariant by \cite[p. 58, (124)]{li2023spectrumaware}. Showing that the reduced model is right-rotationally invariant requires Gaussianity and is what leads to the added assumption of Lemma~\ref{lem:align-consistent}. Without the Gaussian assumption, the errors in the reduced model will not be i.i.d. However, in general we only require that certain inner products concentrate. We believe this will still hold in the reduced model when one explicitly tracks the form of the error. 

\subsection{Proof of Corollary~\ref{cor:align-unif}}
We will prove uniform convergence term by term. We begin with the variance.
Define the following:
\begin{align*}
    A &= \frac{1}{n}\left[\sum_{i = 1} ^n \frac{D_{ii}^2}{(D_{ii}^2 + \lambda)^2} \left(\mathfrak{d}_\bulk^2 + \1(i \in \mathcal{J}_c)(\mathfrak{d}_i^2 - \mathfrak{d}^2_\bulk)\right)\right] \\
    \hat{A} &= \frac{1}{n}\left[\sum_{i = 1} ^n \frac{D_{ii}^2}{(D_{ii}^2 + \lambda)^2} \left(\hat{\mathfrak{d}}_\bulk^2 + \1(i \in \mathcal{J}_c)(\hat{\mathfrak{d}}_i^2 - \hat{\mathfrak{d}}^2_\bulk)\right)\right] \\
    T_1 &= \sigma^2 A \qquad T_2 = \hat{\sigma}^2 \hat{A} \qquad T_3 = \hat{\sigma}^2 {A} 
\end{align*}
Note that $T_1 = \mathcal{V}_\bl{X}$. First, note that $\hat{A}$ and $\hat{\sigma}^2/n$ are both almost surely bounded for sufficiently large $n$. This follows from the fact that the estimators are strongly consistent for bounded quantities. We can thus uniformly control $T_1 - T_3$ and then $T_3 - T_2$, producing uniform convergence. 

For the bias, we wish to uniformly control
\begin{align*}
    \mathcal{B}_\bl{X}(\hat{\bs\beta}_\lambda, \bs\beta) &= \frac{\lambda^2}{n} \sum_{i = 1} ^p \frac{\left(r^2/\gamma  + \1(i \in \mathcal{J}_a)n\alpha_i^2\right) \left(\mathfrak{d}_\bulk^2 + \1(i \in \mathcal{J}_c)(\mathfrak{d}_i^2 - \mathfrak{d}^2_\bulk)\right)}{(D_{ii}^2 + \lambda)^2}. 
\end{align*}
One can then expand the numerator into 4 constituent terms. Arguments similar to that of the variance then show uniform convergence for each of them. 

\section{Miscellaneous Details}
\subsection{Experimental details}\label{app:exp}

All experiments were run on a personal laptop. Each experiment took only on the order of 1 minute. All experiments are carried out with $n = p = 1000$, because this scenario is most sensitive to the amount of regularization needed (as with $\lambda = 0$, often times the risk is infinite). In Section~\ref{sec:gcv-vanilla}, we first sample a training set. We then repeatedly resample the training noise $\bs\epsilon$ because this is more economical. Each time, we then run each cross-validation technique and plot the loss curve over $\lambda$. The estimated risk is then the risk reported by the loss curve at its minimum. The tuned risk is then the actual mean-squared error (which has an analytical expression) produced when using the estimator with that value of $\lambda$. For ROTI-GCV, Assumption~\ref{item:scaling}, while useful theoretically, proves to provide some practical challenges, mainly that finding the correct rescaling of the data that makes this assumption hold is not always simple, since the trace does not necessarily concentrate. In the experiments in Section~\ref{sec:gcv-vanilla}, the data is drawn according to distributions satisfying this assumption, and the data is not normalized after being sampled. That said, normalizing the data does not affect any results. It is important to note that this scaling never affects the tuned value of $\lambda$ and instead only affects the risk estimate.

In Section~\ref{sec:gcv-aligned}, it becomes necessary to estimate the eigenvalues of the test set data. To do this, we now sample a test set ($n = p = 1000$ again), and then compute the eigenvalues of the test set. A similar result should hold when using eigenvalues of the training set, since here both sets are equal in distribution. 

\subsection{Semi-real experimental details}\label{app:semi-real}
\subsubsection{Speech data}
Speech data was taken directly from \cite{OpenMLSpeech} and then was standardized (demeaned and rescaled to have variance 1), as is usual when using ridge-regression. 

\subsubsection{Residualized returns}

\begin{enumerate}[itemsep=0pt, parsep=0pt]
    \item There is one row in $X$ for each minute. Each row is the residualized return over the last 30 minutes, so consecutive rows are heavily correlated, since they share 29 minutes of returns. $X$ has 493 rows and 493 columns ($n = p = 493$).
    \item We generate $\beta$ uniformly on the sphere of radius $r^2\sqrt{n}$, and the training labels $y = X\beta + \epsilon$, where $\epsilon_i \sim N(0, \sigma^2)$.
    \item We plot the predicted loss curve given by each cross-validation metric
    \item We compute the actual loss curve over the test data.
\end{enumerate}

\subsection{Proof of right-rotational invariance}\label{app:riri-proof}
Both the equicorrelated and autocorrelated cases can be written as $\bs\Sigma^{1/2} \bl{G}$, where $\bl{G}$ has i.i.d. standard Gaussian entries; the rotational invariance of the Gaussian then implies the right-rotational variance of $\bs\Sigma^{1/2}\bl{G}$ in these cases.

$t$-distributed data: A multivariate $t$ distribution with $\nu$ degrees of freedom can be generated by first sampling $\bl{y} \sim \N(\bl{0}, \bl{I}_n)$ and $u \sim \chi_\nu$; then $\bl{x} = \bl{y} / \sqrt{u / \nu}$; the rotational invariance of the Gaussian implies that this distribution is rotationally invariant. This proves rotational invariance for a single sample; stacking multiple independent samples into a matrix retains the rotational invariance. 

Products of Gaussian matrices: follows from rotational invariance of the Gaussian.

Matrix normals: Recall that $\bs\Sigma^{\mathrm{col}}$ is drawn from an inverse Wishart distribution with identity scale and $(1 + \delta)p$ degrees of freedom; this can be generated by $(\bl{G}_2^\top \bl{G}_2)^{-1}$, where $\bl{G}_2 \in \R^{(1 + \delta)p \times p}$ has i.i.d. standard Gaussian entries (since the inverse Wishart has identity scale) \cite[Appendix~A]{gelmanbda04}. A matrix normal with row covariance $\bs\Sigma^{\text{row}}$ and column covariance $\bs\Sigma^{\text{col}} = (\bl{G}_2^\top \bl{G}_2)^{-1}$ can then be generated by $(\bs\Sigma^{\text{row}})^{1/2}\bl{G} (\bl{G}_2^\top \bl{G}_2)^{-1/2}$ (with $\bl{G}$ again having i.i.d. Gaussian entries and being independent of $\bl{G}_2$). The claim then follows from the following. For any fixed $\bl{O} \in \mathbb{O}_p$,
\begin{align*}
    (\bs\Sigma^{\text{row}})^{1/2}\bl{G} (\bl{G}_2^\top \bl{G}_2)^{-1/2} \bl{O} &\stackrel{d}{=} (\bs\Sigma^{\text{row}})^{1/2}\bl{G} \bl{O} (\bl{O} \bl{G}_2^\top \bl{G}_2 \bl{O}^\top)^{-1/2} \bl{O}\\
    &= (\bs\Sigma^{\text{row}})^{1/2}\bl{G} (\bl{G}_2^\top \bl{G}_2 )^{-1/2}
\end{align*}
as required. 

Spiked matrices: It suffices to show that for any fixed $\bl{O}$ that $(\lambda \bl{V} \bl{W}^\top, \bl{G}) \stackrel{d}{=} (\lambda \bl{V} \bl{W}^\top \bl{O}, \bl{G} \bl{O})$. Since the first consists of an independent pair, it suffices to show that $\bl{V}\bl{W}^\top \stackrel{d}{=} \bl{V} \bl{W}^\top \bl{O}$, $\bl{G} \stackrel{d}{=} \bl{G}\bl{O}$ and that $\lambda \bl{V}\bl{W}^\top \bl{O}$ is independent of $\bl{G} \bl{O}$. The last two statements are automatic from the rotational invariance of the Gaussian and the independence of the $\bl{V}\bl{W}^\top$ and $\bl{G}$. The first can be seen from the fact that $\bl{W} = \bl{W'}\bl{P}$, where $\bl{P} \in \R^{p \times r}$ has $\bl{P}_{ij} = \mathds{1}(i = j)$ and $\bl{W'} \in \R^{p \times p}$ is Haar. Then $\bl{V}\bl{W}^\top \bl{O} = \bl{V} \bl{P}^\top (\bl{W'})^\top \bl{O} \stackrel{d}{=} \bl{V} \bl{P}^\top (\bl{W'})^\top = \bl{V} \bl{W}^\top $ where the distributional equality follows from the rotational invariance of Haar matrices. 

\subsection{Uniform convergence}\label{app:unif}
Suppose one has a set of (possibly random) functions $f_n$ satisfy, for some deterministic $f$,
\begin{equation}
    \sup_{x \in [x_1, x_2]} |f_n(x) - f(x)| \xrightarrow{a.s.} 0.
\end{equation}
Define
\begin{align*}
    x_{\mathsf{cv}, n} = \argmin_{x \in [x_1, x_2]} f_n(x) \qquad x_* = \argmin_{x \in [x_1, x_2]} f(x).
\end{align*}
Then
\begin{equation}
     \left|f\left(x_{\mathsf{cv}, n}\right) - f(x_*)\right| \to 0.
\end{equation}
Here the random functions $f$ represent cross validation metrics which uniformly converge to the true asymptotic out-of-sample risk $f$. This result then shows that the minimizer chosen by the cross-validation metric tends towards the asymptotically optimal value. Note that this result cannot be obtained with only pointwise convergence. 
\begin{proof}
Note $f(x_{\mathsf{cv}, n}) - f(x_*) \geq 0$ always. Hence it suffices to show an upper bound which tends to zero. 
\begin{align*}
    f(x_{\mathsf{cv}, n}) - f(x_*) &= f(x_{\mathsf{cv}, n}) - f_n(x_{\mathsf{cv}, n}) + \underbrace{f_n(x_{\mathsf{cv}, n}) - f_n(x_*)}_{\leq 0} + f_n(x_*) - f(x_*) \\
    &\leq |f(x_{\mathsf{cv}, n}) - f_n(x_{\mathsf{cv}, n})| + |f_n(x_*) - f(x_*)| \\
    &\leq 2\sup_{x \in [x_1, x_2]} |f_n(x) - f(x)|  \to 0.
\end{align*}
as required.
\end{proof}

\subsection{Distribution notation}\label{app:notation}
\begin{itemize}
    \item $\mathsf{InvWishart}(\bl{\Psi}, \nu)$ denotes an inverse Wishart distribution \cite[Appendix~A]{gelmanbda04} with scale matrix $\bs\Psi$ and degrees of freedom $\nu$. 
    \item $\mathsf{MN}(\bs\mu, \bs\Sigma^{\text{row}}, \bs\Sigma^{\text{col}})$ denotes a matrix normal with among-row covariance $\bs\Sigma^{\text{row}}$ and among-column covariance $\bs\Sigma^{\text{col}}$. 
\end{itemize}

\subsection{LOOCV Intractability}\label{app:loocv-bad}

The LOOCV in our setting is difficult to analyze 
because the objective depends on $\bl{Q}$, on which we do not place any assumptions. In particular, one can derive
\begin{align*}
    \LOOCV_n(\lambda) &= \bl{y}^\top (\bl{I} - \bl{S}_{\lambda}) \bl{D}_{\lambda}^{-2} (\bl{I} - \bl{S}_{\lambda}) \bl{y}/n \\
    &= \bl{y}^\top (\I - \bl{Q}^\top (\bl{D}(\DtD + \lambda\I)^{-1} \bl{D}^\top) \bl{Q}) \bl{D}_{\lambda}^{-2} (\I - \bl{Q}^\top (\bl{D}(\DtD + \lambda\I)^{-1} \bl{D}^\top) \bl{Q}) \bl{y} / n
\end{align*}
where $\bl{D}_{\lambda} = \mathrm{diag}((1 - (\bl{S}_{\lambda})_{ii})_{i = 1} ^n)$. We try to simplify the diagonal term first.
\begin{align*}
    (D_{\lambda})_{ii} &= 1 - \bl{x}_i^\top (\bl{X}^\top \bl{X} + \lambda \I)^{-1} \bl{x}_i \\
    &= 1 - \bl{q}_i^\top \bl{D} \bl{O} (\bl{O}^\top \bl{D}^\top \bl{D} \bl{O} + \lambda\I)^{-1} \bl{O}^\top \bl{D}^\top \bl{q}_i \\
    &= 1 - \bl{q}_i^\top \bl{D} (\DtD + \lambda\I)^{-1} \bl{D}^\top \bl{q}_i
\end{align*}
where $\bl{q}_i$ is the $i$-th row of $\bl{Q}$. 
We analyze
\begin{align*}
    \bl{Q} \bl{D}^{-2}_{\lambda} \bl{Q}^\top &= \sum_{i = 1} ^n \bl{q}_i\bl{q}_i^\top (\bl{D}_{\lambda} ^{-2})_{ii}.
\end{align*}
Then $\LOOCV$ should contribute two terms, as the cross term should vanish. They are
\begin{align*}
    (\bl{O}\bs\beta)^\top \bl{D}^\top \bl{Q} (\I - \bl{D}(\DtD + \lambda\I)^{-1}\bl{D}^\top) (\bl{Q} \bl{D}_{\lambda} ^{-2} \bl{Q}^{\top})(\I - \bl{D}(\DtD + \lambda\I)^{-1}\bl{D}^\top)\bl{O}\bs\beta
\end{align*}
which should concentrate around its trace. 

The second is 
\begin{align*}
    \bs\epsilon^\top (\I - \bl{D}(\DtD + \lambda\I)^{-1}\bl{D}^\top) (\bl{Q} \bl{D}_{\lambda} ^{-2} \bl{Q}^{\top})(\I - \bl{D}(\DtD + \lambda\I)^{-1} \bs\epsilon
\end{align*}
which also concentrates around some multiple of its trace. 

Finding this trace now involves numerous terms with $\bl{Q}$. Everything except the center term is diagonal, but immediately one arrives at the following difficulty:
\begin{align*}
    (\bl{Q} \bl{D}^{-2}_{\lambda} \bl{Q})_{ii} &= \sum_{j = 1} ^n (\bl{q}_j \bl{q}_j^\top)_{ii} (\bl{D}_{\lambda} ^{-2}) _{jj} \\
    &= \sum_{j = 1} ^n (\bl{q}_{ji})^2 (\bl{D}_{\lambda} ^{-2}) _{jj} 
\end{align*}
Assuming that the rows of $\bl{Q}$ are exchangeable conditional on $\bl{D}$ is insufficient to make progress. If $\bl{Q}$ is assumed to also be Haar, then likely closed forms can be derived, but this is not pursued. 

\subsection{Algorithm Complexity}

Assuming matrix multiplications and inversions cost $O(p^3)$, ROTI-GCV takes $O(p^3)$ time. The entire process takes only a constant factor more time than computing a single ridge solution, as the fitting process implicitly requires fitting multiple ridge solutions. 

\section{Verifying assumptions for real data}\label{app:verify}

Here, we work through applications of our method. We take a look at 5 application scenarios. Our focus will be on detecting $\mathcal{J}_c$, since that is the new concept introduced in this work. For each of the first four examples, we will generate the signal vector uniformly on the sphere, so there is no signal alignment. Afterwards, we will discuss a 5th example with signal-PC alignment. We refer to \cite[Section~4]{li2023spectrumaware} for more detailed discussion on alignment detection, and the details on hypothesis testing for signal-PC alignment, presenting a method for selecting $\mathcal{J}_a$ that controls the false discovery rate. \textbf{For each, we discuss how to check whether the most important assumptions hold.} This includes how the index sets $\mathcal{J}_a$ and $\mathcal{J}_c$ (aligned/coupled eigenvectors, definitions both restated below) can be selected, and how our estimation scheme performs. 

The five examples we discuss are as follows:
\begin{enumerate}
    \item residualized returns, where each row contains the residualized returns a 30 minute interval; this was shown in Figure~\ref{fig:sp500}, and are analyzed in more detail in Figure~\ref{fig:resid-ret-big}.
    \item the gaussian mixture, where $\bl{x}_i \sim \frac{1}{2}\N(\vec{3}, I_p) + \frac{1}{2}N(-\vec{3}, I_p)$; this was shown in Figure~\ref{fig:mixture}, and is analyzed in more detail in Figure~\ref{fig:gaussian-mixture-big}.
    \item speech data, a real dataset retrieved from OpenML, shown in \ref{fig:speech-data-result}, analyzed in more detail in Figure~\ref{fig:speech-data-big};
    \item unresidualized returns, where each row contains the unresidualized return over a 30 minute interval, shown in Figure~\ref{fig:bad};
    \item speech data with Signal-PC alignment, shown in Figure~\ref{fig:speech-data-align} shown again in Figure~\ref{fig:final};
\end{enumerate}

\textbf{As a refresher for notation}:
\begin{itemize}
    \item We study ridge regression given data $(\bl{X}, \bl{y})$, where $\bl{y} = \bl{X}\beta + \bs\epsilon$. The training set $\bl X = \bl Q^\top \bl D \bl O \in \mathbb{R}^{n \times p}$ is right-rotationally invariant, meaning $\bl O \sim \mathrm{Haar}(\mathbb{O}_p)$. The error $\bs\epsilon$ is centered with independent entries.
    \item We are interested in the test loss on a test set $\tbl{X} = \tbl{Q}^\top \tbl{D} \tbl{O} \in \mathbb{R}^{n' \times p}$, defined as 
    \[R_{\bl{X}, \bl{y}}(\hat{{\bs\beta}}(\bl{X}, \bl{y}), {\bs\beta})=\frac{1}{n^{\prime}} \mathbb{E}\left[\|\tbl{{X}} \hat{{\bs\beta}}-\tbl{{X}} {\bs\beta}\|^2 \mid \bs{X}, \bs{y}\right].\]
    \item The two key parameters are the signal strength $r^2 = \|\bs\beta\|_2^2/n$ and the noise level $\sigma^2 = \mathbb{E}[\bs\epsilon_i^2]$
    \item Let $\bl o_i^\top$ denote the $i$-th row of $\bl O$ and $\tbl{o}_i$ denote the $i$-th row of $\tbl{O}$. 
    \item \textbf{Aligned eigenvectors} refer vectors $o_i$ which align with the signal $\beta$. They are indexed by $\mathcal{J}_a$, and satisfy $\bs\beta = \sum_{i \in \mathcal J_a} \sqrt{n}\alpha_i \bl o_i + \bs\beta'$. 
    \item \textbf{Coupled eigenvectors} refer to vectors $\bl o_i$ which align with $\tbl{o}_i$, which are eigenvectors of the training covariates. They are indexed by $\mathcal{J}_c$, and we assume for each $i \in \mathcal{J}_c$, that $\bl o_i = \tbl{o}_i$. 
    \item Aligned and coupled eigenvectors allow us to model better model structures in real data that may not initially fall under the coverage of our method. We will refer to scenarios covered without the need for coupled/aligned eigenvectors as the \textbf{simpler setting}; in the main text, these are shown in Figure~1. For scenarios that require coupled/aligned eigenvectors, we refer to them as the \textbf{coupled/aligned} setting; these are covered in Figure~2 of the main text. 
\end{itemize}

\textbf{The most important assumptions}, reproduced informally from the main text, are as follows:
    \begin{enumerate}
        \item On the regularity of singular value distributions:
        \begin{itemize}
            \item In the simpler setting: we require \textbf{Assumption 2}, that the maximum singular value $\lambda_{\text{max}} (D)$ is almost surely bounded. 
            \item In the aligned/coupled setting, we further require \textbf{Assumption 11} that the expected maximum singular value of the test covariates, $\|\mathbb{E}[\tbl{{D}}^{\top} \tbl{{D}}]\|_{\textsf{op}}$, is also almost surely bounded. 
        \end{itemize}
        
        \textbf{How to check this}: One can generally verify that this assumption is satisfied by plotting a histogram of the singular values $(D_{ii})_{i = 1} ^p$ and $(\widetilde{D}_{ii})_{i = 1} ^p$. In the  examples we discuss, we only do this for the training set since we do not expect any distribution shift. As examples, see Figures~\ref{fig:resid-ret-spectrum}, \ref{fig:gaussian-mixture-spectrum} etc. 
        \item On the relation between the eigenvectors of the training set $\{\bl o_i\}_{i = 1} ^p$ and test set $\{\tbl{o}_i\}_{i = 1} ^p$:
        \begin{itemize}
            \item In the simpler setting, we have \textbf{Assumption 6}, which states the test set is independent of the training set.
            \item In the coupled setting, we have \textbf{Assumption 7}, which states the top eigenvectors of the test and training set are exactly equal, i.e. for $i \in \mathcal J_c$, we have $o_i = \tilde{o}_i$. 
        \end{itemize}
        One should think of Assumption 7 as a weakening of Assumption 6, where we allow for additional forms of structure to handle trends we see in data.
        
        \textbf{How to check this}: We compute the overlaps $\sqrt{p}\langle \tilde{o_i}, o_j \rangle$ for the top few eigenvectors (we choose 10). Note that if these two vectors were independent, then one has $\sqrt{p} \langle \tilde{o_i}, \tilde{o_j} \rangle \xrightarrow{p \to \infty} N(0, 1)$. Hence one can empirically observe if two eigenvectors possess overlap that is significantly larger than expected; if they do, then they should be coupled. For examples, see Figures~\ref{fig:resid-ret-numericals}, \ref{fig:gaussian-mixture-numericals}. 

        One limitation of this approach is that each eigenvector of the training set can only be coupled to one in the test set; what this means is that if we find eigenvectors in the training set that overlap heavily with multiple eigenvectors of the test set, then our approach is not expected to perform well. We will see this occur in setting 4. 
        
        \item On the position of $\beta$ relative to $\{o_i\}_{i = 1} ^p$, which are the rows of $O$:
        \begin{itemize}
            \item In the simpler setting, we have \textbf{Assumption 5}, which states that $\{o_i\}_{i = 1} ^p$ are independent of $\beta$.
            \item In the aligned setting, we have \textbf{Assumption 8}, which states a finite collection of eigenvectors indexed by $\mathcal J_a$ are aligned with $\beta$, so that $\beta = \sum_{i \in \mathcal J_a} \sqrt{n}\alpha_i o_i + \beta'$. 
        \end{itemize}
        \textbf{How to check this}: Identification of alignment is discussed in \cite{li2023spectrumaware}, which proposes a hypothesis testing framework for identifying $J_a$. They provide a method to compute a $p$-value per eigenvector that tests whether or not it is aligned; one can then apply the Benjamini-Hochberg procedure to control the false discovery rate. In the 5th example, we use the same approach, where the $p$-values are shown in Table~\ref{table:final-v2}.
    \end{enumerate}
\newpage
\section{Residualized Returns}
\begin{figure}
    \centering
    \begin{subfigure}[t]{0.48\textwidth}
        \centering
    \includegraphics[width=\linewidth]{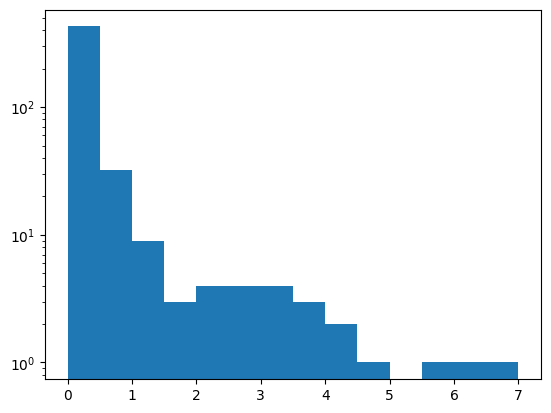}
        \caption{Histogram of singular values $(D_{ii})_{i = 1} ^n$}
        \label{fig:resid-ret-spectrum}
    \end{subfigure}%
    \hfill
    \begin{subfigure}[t]{0.48\textwidth}
        \centering
    \includegraphics[width=\linewidth]{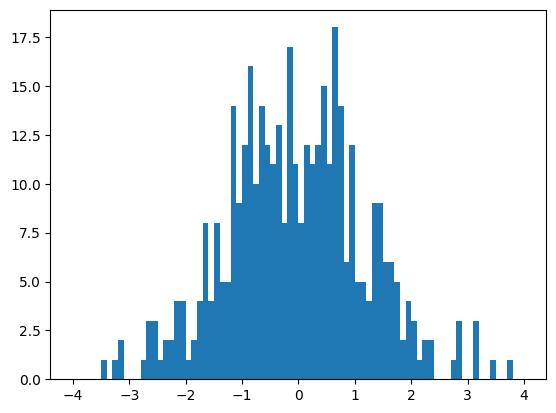}
        \caption{Histogram of overlaps $\{\sqrt{p}\langle \tilde{o}_i, o_j \rangle : 1 \leq i \leq j \leq 20 \}$}
        \label{fig:resid-ret-overlaps}
    \end{subfigure}%
    \hfill
    \begin{subfigure}[t]{0.48\textwidth}
    \centering
{\tiny
\setlength\tabcolsep{1.5pt}\renewcommand{\arraystretch}{1.5}
\sffamily
\begin{tabular}{p{1.5em}C{2.2em}C{2.2em}C{2.2em}C{2.2em}C{2.2em}C{2.2em}C{2.2em}C{2.2em}C{2.2em}C{2.2em}}
 & $\tilde{o}_{1}$ & $\tilde{o}_{2}$ & $\tilde{o}_{3}$ & $\tilde{o}_{4}$ & $\tilde{o}_{5}$ & $\tilde{o}_{6}$ & $\tilde{o}_{7}$ & $\tilde{o}_{8}$ & $\tilde{o}_{9}$ & $\tilde{o}_{10}$ \\
$o_{1}$ & \cellcolor[HTML]{fcaf93} \textcolor[HTML]{000000}{2.87} & \cellcolor[HTML]{fdcab5} \textcolor[HTML]{000000}{2.02} & \cellcolor[HTML]{fee4d8} \textcolor[HTML]{000000}{1.04} & \cellcolor[HTML]{fdc5ae} \textcolor[HTML]{000000}{2.18} & \cellcolor[HTML]{fedaca} \textcolor[HTML]{000000}{1.48} & \cellcolor[HTML]{fedccd} \textcolor[HTML]{000000}{1.37} & \cellcolor[HTML]{fcc1a8} \textcolor[HTML]{000000}{2.34} & \cellcolor[HTML]{ffece3} \textcolor[HTML]{000000}{0.58} & \cellcolor[HTML]{fdd5c4} \textcolor[HTML]{000000}{1.64} & \cellcolor[HTML]{fcc2aa} \textcolor[HTML]{000000}{2.30} \\
$o_{2}$ & \cellcolor[HTML]{fff0e9} \textcolor[HTML]{000000}{0.31} & \cellcolor[HTML]{fee4d8} \textcolor[HTML]{000000}{1.03} & \cellcolor[HTML]{fff3ed} \textcolor[HTML]{000000}{0.15} & \cellcolor[HTML]{fca588} \textcolor[HTML]{000000}{3.20} & \cellcolor[HTML]{fee8de} \textcolor[HTML]{000000}{0.78} & \cellcolor[HTML]{ffece4} \textcolor[HTML]{000000}{0.52} & \cellcolor[HTML]{fee8dd} \textcolor[HTML]{000000}{0.82} & \cellcolor[HTML]{fff0e8} \textcolor[HTML]{000000}{0.35} & \cellcolor[HTML]{ffeee6} \textcolor[HTML]{000000}{0.43} & \cellcolor[HTML]{fee8de} \textcolor[HTML]{000000}{0.76} \\
$o_{3}$ & \cellcolor[HTML]{fcb69b} \textcolor[HTML]{000000}{2.66} & \cellcolor[HTML]{fca588} \textcolor[HTML]{000000}{3.17} & \cellcolor[HTML]{fff4ef} \textcolor[HTML]{000000}{0.07} & \cellcolor[HTML]{fedbcc} \textcolor[HTML]{000000}{1.42} & \cellcolor[HTML]{fc9c7d} \textcolor[HTML]{000000}{3.47} & \cellcolor[HTML]{ffeee6} \textcolor[HTML]{000000}{0.44} & \cellcolor[HTML]{ffede5} \textcolor[HTML]{000000}{0.48} & \cellcolor[HTML]{fee5d8} \textcolor[HTML]{000000}{0.98} & \cellcolor[HTML]{fee9df} \textcolor[HTML]{000000}{0.73} & \cellcolor[HTML]{fee5d8} \textcolor[HTML]{000000}{1.01} \\
$o_{4}$ & \cellcolor[HTML]{fff2eb} \textcolor[HTML]{000000}{0.20} & \cellcolor[HTML]{fee9df} \textcolor[HTML]{000000}{0.74} & \cellcolor[HTML]{fcb499} \textcolor[HTML]{000000}{2.73} & \cellcolor[HTML]{fee3d7} \textcolor[HTML]{000000}{1.07} & \cellcolor[HTML]{fee7dc} \textcolor[HTML]{000000}{0.85} & \cellcolor[HTML]{fdd0bc} \textcolor[HTML]{000000}{1.82} & \cellcolor[HTML]{ffeee6} \textcolor[HTML]{000000}{0.46} & \cellcolor[HTML]{fff1ea} \textcolor[HTML]{000000}{0.25} & \cellcolor[HTML]{fee8de} \textcolor[HTML]{000000}{0.76} & \cellcolor[HTML]{fdd5c4} \textcolor[HTML]{000000}{1.64} \\
$o_{5}$ & \cellcolor[HTML]{fedfd0} \textcolor[HTML]{000000}{1.32} & \cellcolor[HTML]{fff3ed} \textcolor[HTML]{000000}{0.12} & \cellcolor[HTML]{fdcbb6} \textcolor[HTML]{000000}{1.99} & \cellcolor[HTML]{fdd4c2} \textcolor[HTML]{000000}{1.66} & \cellcolor[HTML]{fee3d6} \textcolor[HTML]{000000}{1.12} & \cellcolor[HTML]{fdd2bf} \textcolor[HTML]{000000}{1.75} & \cellcolor[HTML]{ffece4} \textcolor[HTML]{000000}{0.54} & \cellcolor[HTML]{ffeee7} \textcolor[HTML]{000000}{0.40} & \cellcolor[HTML]{fff0e8} \textcolor[HTML]{000000}{0.32} & \cellcolor[HTML]{fdd0bc} \textcolor[HTML]{000000}{1.81} \\
$o_{6}$ & \cellcolor[HTML]{fcbfa7} \textcolor[HTML]{000000}{2.37} & \cellcolor[HTML]{ffece3} \textcolor[HTML]{000000}{0.57} & \cellcolor[HTML]{fdd3c1} \textcolor[HTML]{000000}{1.70} & \cellcolor[HTML]{ffefe8} \textcolor[HTML]{000000}{0.38} & \cellcolor[HTML]{fee0d2} \textcolor[HTML]{000000}{1.26} & \cellcolor[HTML]{fee3d7} \textcolor[HTML]{000000}{1.07} & \cellcolor[HTML]{feeae1} \textcolor[HTML]{000000}{0.63} & \cellcolor[HTML]{ffeee6} \textcolor[HTML]{000000}{0.45} & \cellcolor[HTML]{ffefe8} \textcolor[HTML]{000000}{0.39} & \cellcolor[HTML]{fdd2bf} \textcolor[HTML]{000000}{1.72} \\
$o_{7}$ & \cellcolor[HTML]{fee6da} \textcolor[HTML]{000000}{0.93} & \cellcolor[HTML]{fdcebb} \textcolor[HTML]{000000}{1.86} & \cellcolor[HTML]{fff2eb} \textcolor[HTML]{000000}{0.23} & \cellcolor[HTML]{fee7dc} \textcolor[HTML]{000000}{0.84} & \cellcolor[HTML]{ffede5} \textcolor[HTML]{000000}{0.48} & \cellcolor[HTML]{fff5f0} \textcolor[HTML]{000000}{0.02} & \cellcolor[HTML]{feeae0} \textcolor[HTML]{000000}{0.69} & \cellcolor[HTML]{fff5f0} \textcolor[HTML]{000000}{0.02} & \cellcolor[HTML]{feeae1} \textcolor[HTML]{000000}{0.63} & \cellcolor[HTML]{fedbcc} \textcolor[HTML]{000000}{1.41} \\
$o_{8}$ & \cellcolor[HTML]{fc8666} \textcolor[HTML]{f1f1f1}{4.12} & \cellcolor[HTML]{fee8dd} \textcolor[HTML]{000000}{0.81} & \cellcolor[HTML]{fcb99f} \textcolor[HTML]{000000}{2.57} & \cellcolor[HTML]{fedfd0} \textcolor[HTML]{000000}{1.32} & \cellcolor[HTML]{fed9c9} \textcolor[HTML]{000000}{1.52} & \cellcolor[HTML]{fee4d8} \textcolor[HTML]{000000}{1.03} & \cellcolor[HTML]{fee7db} \textcolor[HTML]{000000}{0.88} & \cellcolor[HTML]{fedaca} \textcolor[HTML]{000000}{1.48} & \cellcolor[HTML]{fee9df} \textcolor[HTML]{000000}{0.72} & \cellcolor[HTML]{fee7db} \textcolor[HTML]{000000}{0.88} \\
$o_{9}$ & \cellcolor[HTML]{fff3ed} \textcolor[HTML]{000000}{0.14} & \cellcolor[HTML]{fee5d8} \textcolor[HTML]{000000}{1.01} & \cellcolor[HTML]{fff5f0} \textcolor[HTML]{000000}{0.01} & \cellcolor[HTML]{fee2d5} \textcolor[HTML]{000000}{1.17} & \cellcolor[HTML]{fee0d2} \textcolor[HTML]{000000}{1.27} & \cellcolor[HTML]{fcbda4} \textcolor[HTML]{000000}{2.43} & \cellcolor[HTML]{fee2d5} \textcolor[HTML]{000000}{1.14} & \cellcolor[HTML]{ffeee6} \textcolor[HTML]{000000}{0.45} & \cellcolor[HTML]{fff0e8} \textcolor[HTML]{000000}{0.32} & \cellcolor[HTML]{fee3d6} \textcolor[HTML]{000000}{1.12} \\
$o_{10}$ & \cellcolor[HTML]{fff5f0} \textcolor[HTML]{000000}{0.02} & \cellcolor[HTML]{fee0d2} \textcolor[HTML]{000000}{1.28} & \cellcolor[HTML]{fdd5c4} \textcolor[HTML]{000000}{1.64} & \cellcolor[HTML]{fff4ee} \textcolor[HTML]{000000}{0.09} & \cellcolor[HTML]{fee5d9} \textcolor[HTML]{000000}{0.94} & \cellcolor[HTML]{ffede5} \textcolor[HTML]{000000}{0.50} & \cellcolor[HTML]{ffeee7} \textcolor[HTML]{000000}{0.42} & \cellcolor[HTML]{fedfd0} \textcolor[HTML]{000000}{1.32} & \cellcolor[HTML]{fff0e8} \textcolor[HTML]{000000}{0.34} & \cellcolor[HTML]{feeae1} \textcolor[HTML]{000000}{0.66} \\
\end{tabular}
} 
    \caption{Numerical values of overlaps; Cell $(i, j)$ contains the value of $\sqrt{p}|\langle o_i, \tilde{o}_j \rangle|$. Note that $\sqrt{p}\langle o_i, \tilde{o}_j \rangle \xrightarrow{p \to \infty} N(0, 1)$}
        \label{fig:resid-ret-numericals}
    \end{subfigure}%
    \hfill
    \begin{subfigure}[t]{0.48\textwidth}
        \centering
    \includegraphics[width=\linewidth]{figures/sp500_single.png}
        \caption{Tuning curves for Residualized Returns; same as in Figure~2 of the main text. }
        \label{fig:resid-ret-result}
    \end{subfigure}
    \caption{Residualized returns setting, $n = p = 493$, $r^2 = \sigma^2 = 1$; Figure~\ref{fig:resid-ret-overlaps} is essentially a histogram of values in~\ref{fig:resid-ret-numericals}.}
    \label{fig:resid-ret-big}
\end{figure}

\textbf{Overview of Setting}: to recap from the main text (Figure~2d), we considered data where each row contained the 30 minute residualized returns of a collection of 493 stocks. Here, we set $n = p = 493$ and $r^2 = \sigma^2 = 1$. 

\textbf{Checking Assumptions}:
\begin{itemize}
    \item Regularity of Singular Value Distribution: In Figure~\ref{fig:resid-ret-spectrum}, we plot a histogram of the singular values and see that it is are well-behaved with no strong outliers. 
\item On coupling: Furthermore, while the histogram of singular values (shown in Figure~\ref{fig:resid-ret-overlaps}, numerical values in \ref{fig:resid-ret-numericals}) has larger tails than expected for a normal distribution, they are not overly so, and thus we find we do not require coupling either (i.e. Assumption 6 essentially holds). As a result, applying our method without coupling performs well (result shown in Figure~\ref{fig:resid-ret-result}. 
\end{itemize}

\newpage
\section{Gaussian mixture}
\begin{figure}
    \centering
    \begin{subfigure}[t]{0.45\textwidth}
        \centering
    \includegraphics[width=\textwidth]{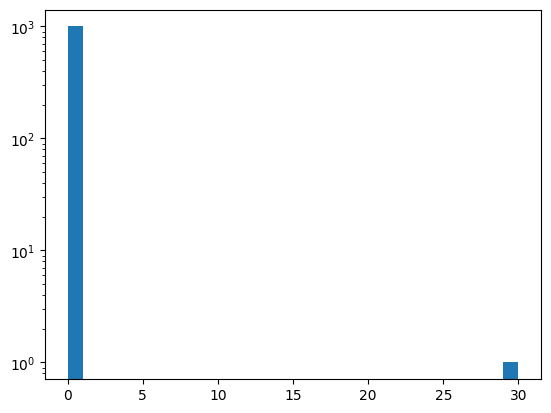}
        \caption{Histogram of singular values $(D_{ii})_{i = 1} ^n$}
        \label{fig:gaussian-mixture-spectrum}
    \end{subfigure}%
    \hfill
    \begin{subfigure}[t]{0.45\textwidth}
        \centering
    \includegraphics[width=\textwidth]{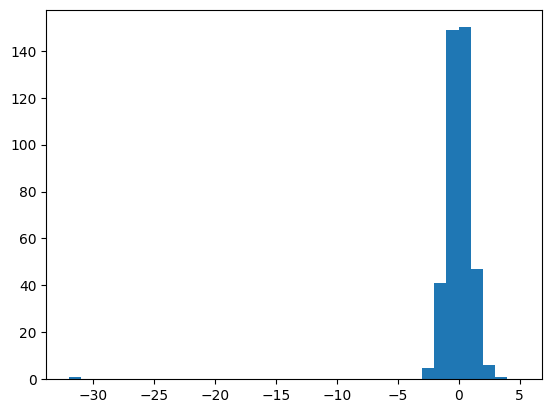}
        \caption{Histogram of overlaps $\{\sqrt{p}\langle \tilde{o}_i, o_j \rangle : 1 \leq i \leq j \leq 20 \}$.\textbf{ Note the outlier value at around $-31$.} }
        \label{fig:gaussian-mixture-overlaps}
    \end{subfigure}%
    \hfill
    \begin{subfigure}[t]{0.45\textwidth}
        \centering
{\tiny
\setlength\tabcolsep{1.5pt}\renewcommand{\arraystretch}{1.5}
\sffamily
\begin{tabular}{p{1.5em}C{2.5em}C{2.5em}C{2.5em}C{2.5em}C{2.5em}C{2.5em}C{2.5em}C{2.5em}C{2.5em}C{2.5em}}
 & $\tilde{o}_{1}$ & $\tilde{o}_{2}$ & $\tilde{o}_{3}$ & $\tilde{o}_{4}$ & $\tilde{o}_{5}$ & $\tilde{o}_{6}$ & $\tilde{o}_{7}$ & $\tilde{o}_{8}$ & $\tilde{o}_{9}$ & $\tilde{o}_{10}$ \\
$o_{1}$ & \cellcolor[HTML]{67000d} \textcolor[HTML]{f1f1f1}{31.62} & \cellcolor[HTML]{fff5f0} \textcolor[HTML]{000000}{0.01} & \cellcolor[HTML]{fff5f0} \textcolor[HTML]{000000}{0.00} & \cellcolor[HTML]{fff5f0} \textcolor[HTML]{000000}{0.01} & \cellcolor[HTML]{fff5f0} \textcolor[HTML]{000000}{0.03} & \cellcolor[HTML]{fff5f0} \textcolor[HTML]{000000}{0.00} & \cellcolor[HTML]{fff5f0} \textcolor[HTML]{000000}{0.03} & \cellcolor[HTML]{fff5f0} \textcolor[HTML]{000000}{0.02} & \cellcolor[HTML]{fff5f0} \textcolor[HTML]{000000}{0.01} & \cellcolor[HTML]{fff5f0} \textcolor[HTML]{000000}{0.01} \\
$o_{2}$ & \cellcolor[HTML]{fff5f0} \textcolor[HTML]{000000}{0.00} & \cellcolor[HTML]{ffede5} \textcolor[HTML]{000000}{0.71} & \cellcolor[HTML]{fff2eb} \textcolor[HTML]{000000}{0.35} & \cellcolor[HTML]{fff2eb} \textcolor[HTML]{000000}{0.32} & \cellcolor[HTML]{fee3d6} \textcolor[HTML]{000000}{1.67} & \cellcolor[HTML]{fff3ed} \textcolor[HTML]{000000}{0.22} & \cellcolor[HTML]{ffebe2} \textcolor[HTML]{000000}{0.92} & \cellcolor[HTML]{fff1ea} \textcolor[HTML]{000000}{0.37} & \cellcolor[HTML]{fee9df} \textcolor[HTML]{000000}{1.09} & \cellcolor[HTML]{fff1ea} \textcolor[HTML]{000000}{0.36} \\
$o_{3}$ & \cellcolor[HTML]{fff5f0} \textcolor[HTML]{000000}{0.02} & \cellcolor[HTML]{fff1ea} \textcolor[HTML]{000000}{0.38} & \cellcolor[HTML]{fee8dd} \textcolor[HTML]{000000}{1.23} & \cellcolor[HTML]{ffece4} \textcolor[HTML]{000000}{0.78} & \cellcolor[HTML]{ffeee7} \textcolor[HTML]{000000}{0.60} & \cellcolor[HTML]{fee5d9} \textcolor[HTML]{000000}{1.44} & \cellcolor[HTML]{fff5f0} \textcolor[HTML]{000000}{0.04} & \cellcolor[HTML]{ffece3} \textcolor[HTML]{000000}{0.86} & \cellcolor[HTML]{ffefe8} \textcolor[HTML]{000000}{0.56} & \cellcolor[HTML]{fff2ec} \textcolor[HTML]{000000}{0.26} \\
$o_{4}$ & \cellcolor[HTML]{fff5f0} \textcolor[HTML]{000000}{0.01} & \cellcolor[HTML]{ffede5} \textcolor[HTML]{000000}{0.74} & \cellcolor[HTML]{fff4ef} \textcolor[HTML]{000000}{0.07} & \cellcolor[HTML]{ffeee7} \textcolor[HTML]{000000}{0.61} & \cellcolor[HTML]{fff2eb} \textcolor[HTML]{000000}{0.30} & \cellcolor[HTML]{fee5d8} \textcolor[HTML]{000000}{1.50} & \cellcolor[HTML]{fff2ec} \textcolor[HTML]{000000}{0.29} & \cellcolor[HTML]{fff1ea} \textcolor[HTML]{000000}{0.41} & \cellcolor[HTML]{ffeee6} \textcolor[HTML]{000000}{0.65} & \cellcolor[HTML]{fff4ef} \textcolor[HTML]{000000}{0.11} \\
$o_{5}$ & \cellcolor[HTML]{fff5f0} \textcolor[HTML]{000000}{0.01} & \cellcolor[HTML]{ffefe8} \textcolor[HTML]{000000}{0.57} & \cellcolor[HTML]{feeae0} \textcolor[HTML]{000000}{1.05} & \cellcolor[HTML]{fff2ec} \textcolor[HTML]{000000}{0.27} & \cellcolor[HTML]{fff3ed} \textcolor[HTML]{000000}{0.21} & \cellcolor[HTML]{fff1ea} \textcolor[HTML]{000000}{0.36} & \cellcolor[HTML]{feeae1} \textcolor[HTML]{000000}{0.97} & \cellcolor[HTML]{ffefe8} \textcolor[HTML]{000000}{0.58} & \cellcolor[HTML]{feeae1} \textcolor[HTML]{000000}{0.99} & \cellcolor[HTML]{fff0e8} \textcolor[HTML]{000000}{0.47} \\
$o_{6}$ & \cellcolor[HTML]{fff5f0} \textcolor[HTML]{000000}{0.02} & \cellcolor[HTML]{fff1ea} \textcolor[HTML]{000000}{0.37} & \cellcolor[HTML]{fff0e8} \textcolor[HTML]{000000}{0.49} & \cellcolor[HTML]{fff1ea} \textcolor[HTML]{000000}{0.37} & \cellcolor[HTML]{fee7db} \textcolor[HTML]{000000}{1.31} & \cellcolor[HTML]{ffeee6} \textcolor[HTML]{000000}{0.67} & \cellcolor[HTML]{feeae1} \textcolor[HTML]{000000}{0.94} & \cellcolor[HTML]{fedccd} \textcolor[HTML]{000000}{2.08} & \cellcolor[HTML]{fff4ee} \textcolor[HTML]{000000}{0.16} & \cellcolor[HTML]{fee7dc} \textcolor[HTML]{000000}{1.24} \\
$o_{7}$ & \cellcolor[HTML]{fff5f0} \textcolor[HTML]{000000}{0.01} & \cellcolor[HTML]{fff3ed} \textcolor[HTML]{000000}{0.18} & \cellcolor[HTML]{fee8dd} \textcolor[HTML]{000000}{1.20} & \cellcolor[HTML]{fff3ed} \textcolor[HTML]{000000}{0.22} & \cellcolor[HTML]{fff5f0} \textcolor[HTML]{000000}{0.02} & \cellcolor[HTML]{fee8de} \textcolor[HTML]{000000}{1.12} & \cellcolor[HTML]{ffece4} \textcolor[HTML]{000000}{0.78} & \cellcolor[HTML]{ffebe2} \textcolor[HTML]{000000}{0.91} & \cellcolor[HTML]{ffece4} \textcolor[HTML]{000000}{0.81} & \cellcolor[HTML]{fed8c7} \textcolor[HTML]{000000}{2.33} \\
$o_{8}$ & \cellcolor[HTML]{fff5f0} \textcolor[HTML]{000000}{0.01} & \cellcolor[HTML]{fff2eb} \textcolor[HTML]{000000}{0.33} & \cellcolor[HTML]{fff5f0} \textcolor[HTML]{000000}{0.02} & \cellcolor[HTML]{ffede5} \textcolor[HTML]{000000}{0.72} & \cellcolor[HTML]{ffece3} \textcolor[HTML]{000000}{0.87} & \cellcolor[HTML]{fee5d8} \textcolor[HTML]{000000}{1.49} & \cellcolor[HTML]{fff2ec} \textcolor[HTML]{000000}{0.25} & \cellcolor[HTML]{fdd5c4} \textcolor[HTML]{000000}{2.44} & \cellcolor[HTML]{ffefe8} \textcolor[HTML]{000000}{0.58} & \cellcolor[HTML]{fee7dc} \textcolor[HTML]{000000}{1.27} \\
$o_{9}$ & \cellcolor[HTML]{fff5f0} \textcolor[HTML]{000000}{0.03} & \cellcolor[HTML]{fee4d8} \textcolor[HTML]{000000}{1.55} & \cellcolor[HTML]{ffefe8} \textcolor[HTML]{000000}{0.54} & \cellcolor[HTML]{ffebe2} \textcolor[HTML]{000000}{0.92} & \cellcolor[HTML]{fff0e9} \textcolor[HTML]{000000}{0.43} & \cellcolor[HTML]{fff4ee} \textcolor[HTML]{000000}{0.13} & \cellcolor[HTML]{fff0e9} \textcolor[HTML]{000000}{0.46} & \cellcolor[HTML]{fff0e9} \textcolor[HTML]{000000}{0.45} & \cellcolor[HTML]{fff2eb} \textcolor[HTML]{000000}{0.33} & \cellcolor[HTML]{fff5f0} \textcolor[HTML]{000000}{0.04} \\
$o_{10}$ & \cellcolor[HTML]{fff5f0} \textcolor[HTML]{000000}{0.00} & \cellcolor[HTML]{fff1ea} \textcolor[HTML]{000000}{0.36} & \cellcolor[HTML]{feeae0} \textcolor[HTML]{000000}{1.01} & \cellcolor[HTML]{fee3d6} \textcolor[HTML]{000000}{1.65} & \cellcolor[HTML]{feeae1} \textcolor[HTML]{000000}{0.95} & \cellcolor[HTML]{fee8de} \textcolor[HTML]{000000}{1.15} & \cellcolor[HTML]{ffece3} \textcolor[HTML]{000000}{0.83} & \cellcolor[HTML]{ffede5} \textcolor[HTML]{000000}{0.75} & \cellcolor[HTML]{ffece3} \textcolor[HTML]{000000}{0.83} & \cellcolor[HTML]{ffece4} \textcolor[HTML]{000000}{0.80} \\
\end{tabular}
}
    \caption{Numerical values of overlaps; Cell $(i, j)$ contains the value of $\sqrt{p}|\langle o_i, \tilde{o}_j \rangle|$. Note that $\sqrt{p}\langle o_i, \tilde{o}_j \rangle \xrightarrow{p \to \infty} N(0, 1)$}
        \label{fig:gaussian-mixture-numericals}
    \end{subfigure}%
    \hfill
    \begin{subfigure}[t]{0.45\textwidth}
        \centering
    \includegraphics[width=\textwidth]{figures/mixture.png}
        \caption{Tuning curves for Gaussian mixtures}
        \label{fig:gaussian-mixture-result}
    \end{subfigure}
    \caption{Gaussian mixture setting, $n = p = 1000$; Figure~\ref{fig:gaussian-mixture-overlaps} is essentially a histogram of values in~\ref{fig:gaussian-mixture-numericals}.}
    \label{fig:gaussian-mixture-big}
\end{figure}

To build intuition for the coupled eigenvectors condition (Assumption 7), we take a first look at how these diagnostic plots change when the data is instead drawn from the Gaussian mixture discussed in the main text (restated below):

\textbf{Overview of setting}: Each row $x_i \sim \frac{1}{2}N(\vec{3}, I_p) + \frac{1}{2}N(-\vec{3}, I_p)$. We take $n = p = 1000$, and $r^2 = \sigma^2 = 1$. 

\textbf{Checking assumptions}:
\begin{itemize}
    \item Regularity of Singular Value Distribution: As seen in Figure~\ref{fig:gaussian-mixture-spectrum}, we have one extremely large singular value, showing Assumption 2 is violated. This is usually a bad sign for our method, but in this case it manages to succeed, showing some robustness.
    \item On coupling: In Figures~\ref{fig:gaussian-mixture-overlaps} and \ref{fig:gaussian-mixture-numericals}, the huge outlier overlap is indication that the top two eigenvectors overlap with one another, meaning Assumption 6 does not hold. On the other hand, all the other eigenvectors do not align with each other. This is exactly the situation suited for our coupled eigenvectors condition (Assumption 7). As a result, coupling $o_1$ with $\tilde{o}_1$ produces solid results, shown in Figure~\ref{fig:gaussian-mixture-result}. 
\end{itemize}

\newpage
\section{Speech Data}\label{sec:speech}
\begin{figure}
    \centering
    \begin{subfigure}[t]{0.45\textwidth}
        \centering
    \includegraphics[width=\linewidth]{figures/speech-data-spectrum.png}
        \caption{Histogram of singular values $(D_{ii})_{i = 1} ^n$}
        \label{fig:speech-data-spectrum-v2}
    \end{subfigure}%
    \hfill
    \begin{subfigure}[t]{0.45\textwidth}
        \centering
    \includegraphics[width=\linewidth]{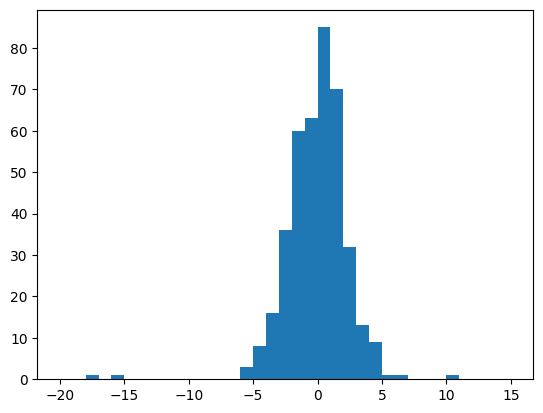}
        \caption{Histogram of overlaps $\{\sqrt{p}\langle \tilde{o}_i, o_j \rangle : 1 \leq i \leq j \leq 20 \}$}
        \label{fig:speech-data-overlaps}
    \end{subfigure}%
    \hfill
    \begin{subfigure}[t]{0.45\textwidth}
        \centering
{\tiny
\setlength\tabcolsep{1.5pt}\renewcommand{\arraystretch}{1.5}
\sffamily
\begin{tabular}{p{1.5em}C{2.5em}C{2.5em}C{2.5em}C{2.5em}C{2.5em}C{2.5em}C{2.5em}C{2.5em}C{2.5em}C{2.5em}}
 & $\tilde{o}_{1}$ & $\tilde{o}_{2}$ & $\tilde{o}_{3}$ & $\tilde{o}_{4}$ & $\tilde{o}_{5}$ & $\tilde{o}_{6}$ & $\tilde{o}_{7}$ & $\tilde{o}_{8}$ & $\tilde{o}_{9}$ & $\tilde{o}_{10}$ \\
$o_{1}$ & \cellcolor[HTML]{67000d} \textcolor[HTML]{f1f1f1}{17.30} & \cellcolor[HTML]{fdd0bc} \textcolor[HTML]{000000}{2.72} & \cellcolor[HTML]{ffeee7} \textcolor[HTML]{000000}{0.63} & \cellcolor[HTML]{fedecf} \textcolor[HTML]{000000}{2.01} & \cellcolor[HTML]{fee4d8} \textcolor[HTML]{000000}{1.55} & \cellcolor[HTML]{feeae0} \textcolor[HTML]{000000}{1.02} & \cellcolor[HTML]{fff2eb} \textcolor[HTML]{000000}{0.30} & \cellcolor[HTML]{feeae1} \textcolor[HTML]{000000}{0.96} & \cellcolor[HTML]{fee2d5} \textcolor[HTML]{000000}{1.70} & \cellcolor[HTML]{fff2ec} \textcolor[HTML]{000000}{0.27} \\
$o_{2}$ & \cellcolor[HTML]{ffece3} \textcolor[HTML]{000000}{0.83} & \cellcolor[HTML]{67000d} \textcolor[HTML]{f1f1f1}{15.24} & \cellcolor[HTML]{ffeee6} \textcolor[HTML]{000000}{0.68} & \cellcolor[HTML]{fee3d7} \textcolor[HTML]{000000}{1.63} & \cellcolor[HTML]{fee7db} \textcolor[HTML]{000000}{1.34} & \cellcolor[HTML]{fee8dd} \textcolor[HTML]{000000}{1.22} & \cellcolor[HTML]{ffebe2} \textcolor[HTML]{000000}{0.89} & \cellcolor[HTML]{fee5d9} \textcolor[HTML]{000000}{1.42} & \cellcolor[HTML]{fff2ec} \textcolor[HTML]{000000}{0.27} & \cellcolor[HTML]{fedaca} \textcolor[HTML]{000000}{2.19} \\
$o_{3}$ & \cellcolor[HTML]{fee9df} \textcolor[HTML]{000000}{1.08} & \cellcolor[HTML]{fee0d2} \textcolor[HTML]{000000}{1.93} & \cellcolor[HTML]{d11e1f} \textcolor[HTML]{f1f1f1}{10.93} & \cellcolor[HTML]{fee3d7} \textcolor[HTML]{000000}{1.60} & \cellcolor[HTML]{ffeee7} \textcolor[HTML]{000000}{0.64} & \cellcolor[HTML]{fedecf} \textcolor[HTML]{000000}{2.02} & \cellcolor[HTML]{fca78b} \textcolor[HTML]{000000}{4.63} & \cellcolor[HTML]{fff2eb} \textcolor[HTML]{000000}{0.35} & \cellcolor[HTML]{fcb398} \textcolor[HTML]{000000}{4.14} & \cellcolor[HTML]{fdd0bc} \textcolor[HTML]{000000}{2.75} \\
$o_{4}$ & \cellcolor[HTML]{fff2ec} \textcolor[HTML]{000000}{0.29} & \cellcolor[HTML]{fdcebb} \textcolor[HTML]{000000}{2.78} & \cellcolor[HTML]{fdccb8} \textcolor[HTML]{000000}{2.92} & \cellcolor[HTML]{fff3ed} \textcolor[HTML]{000000}{0.22} & \cellcolor[HTML]{fcb79c} \textcolor[HTML]{000000}{3.98} & \cellcolor[HTML]{fb7c5c} \textcolor[HTML]{f1f1f1}{6.63} & \cellcolor[HTML]{fdcbb6} \textcolor[HTML]{000000}{2.98} & \cellcolor[HTML]{fee3d6} \textcolor[HTML]{000000}{1.65} & \cellcolor[HTML]{ffefe8} \textcolor[HTML]{000000}{0.54} & \cellcolor[HTML]{fee4d8} \textcolor[HTML]{000000}{1.56} \\
$o_{5}$ & \cellcolor[HTML]{fee7dc} \textcolor[HTML]{000000}{1.27} & \cellcolor[HTML]{fdd0bc} \textcolor[HTML]{000000}{2.70} & \cellcolor[HTML]{fcb69b} \textcolor[HTML]{000000}{4.02} & \cellcolor[HTML]{fedbcc} \textcolor[HTML]{000000}{2.14} & \cellcolor[HTML]{fc9d7f} \textcolor[HTML]{000000}{5.11} & \cellcolor[HTML]{fee7db} \textcolor[HTML]{000000}{1.29} & \cellcolor[HTML]{fcae92} \textcolor[HTML]{000000}{4.34} & \cellcolor[HTML]{fca689} \textcolor[HTML]{000000}{4.72} & \cellcolor[HTML]{fdcbb6} \textcolor[HTML]{000000}{2.95} & \cellcolor[HTML]{fed9c9} \textcolor[HTML]{000000}{2.24} \\
$o_{6}$ & \cellcolor[HTML]{fee8dd} \textcolor[HTML]{000000}{1.19} & \cellcolor[HTML]{ffefe8} \textcolor[HTML]{000000}{0.56} & \cellcolor[HTML]{fca486} \textcolor[HTML]{000000}{4.82} & \cellcolor[HTML]{fc8b6b} \textcolor[HTML]{f1f1f1}{5.95} & \cellcolor[HTML]{fff1ea} \textcolor[HTML]{000000}{0.37} & \cellcolor[HTML]{fcb99f} \textcolor[HTML]{000000}{3.83} & \cellcolor[HTML]{fee6da} \textcolor[HTML]{000000}{1.37} & \cellcolor[HTML]{ffece3} \textcolor[HTML]{000000}{0.87} & \cellcolor[HTML]{fca183} \textcolor[HTML]{000000}{4.98} & \cellcolor[HTML]{feeae0} \textcolor[HTML]{000000}{1.04} \\
$o_{7}$ & \cellcolor[HTML]{fedaca} \textcolor[HTML]{000000}{2.21} & \cellcolor[HTML]{ffeee6} \textcolor[HTML]{000000}{0.70} & \cellcolor[HTML]{fedbcc} \textcolor[HTML]{000000}{2.14} & \cellcolor[HTML]{fdd1be} \textcolor[HTML]{000000}{2.65} & \cellcolor[HTML]{fca486} \textcolor[HTML]{000000}{4.84} & \cellcolor[HTML]{fee1d4} \textcolor[HTML]{000000}{1.77} & \cellcolor[HTML]{ffefe8} \textcolor[HTML]{000000}{0.54} & \cellcolor[HTML]{feeae0} \textcolor[HTML]{000000}{1.00} & \cellcolor[HTML]{fee3d6} \textcolor[HTML]{000000}{1.65} & \cellcolor[HTML]{fdd3c1} \textcolor[HTML]{000000}{2.53} \\
$o_{8}$ & \cellcolor[HTML]{fff0e9} \textcolor[HTML]{000000}{0.46} & \cellcolor[HTML]{fff4ee} \textcolor[HTML]{000000}{0.15} & \cellcolor[HTML]{fdcbb6} \textcolor[HTML]{000000}{2.96} & \cellcolor[HTML]{fff4ee} \textcolor[HTML]{000000}{0.16} & \cellcolor[HTML]{fee3d6} \textcolor[HTML]{000000}{1.66} & \cellcolor[HTML]{fcab8f} \textcolor[HTML]{000000}{4.49} & \cellcolor[HTML]{fff3ed} \textcolor[HTML]{000000}{0.22} & \cellcolor[HTML]{fff1ea} \textcolor[HTML]{000000}{0.40} & \cellcolor[HTML]{fee1d3} \textcolor[HTML]{000000}{1.82} & \cellcolor[HTML]{fff4ef} \textcolor[HTML]{000000}{0.10} \\
$o_{9}$ & \cellcolor[HTML]{fff2eb} \textcolor[HTML]{000000}{0.30} & \cellcolor[HTML]{fff1ea} \textcolor[HTML]{000000}{0.40} & \cellcolor[HTML]{ffede5} \textcolor[HTML]{000000}{0.72} & \cellcolor[HTML]{fff5f0} \textcolor[HTML]{000000}{0.01} & \cellcolor[HTML]{fee3d7} \textcolor[HTML]{000000}{1.63} & \cellcolor[HTML]{fee6da} \textcolor[HTML]{000000}{1.39} & \cellcolor[HTML]{fcb99f} \textcolor[HTML]{000000}{3.86} & \cellcolor[HTML]{fcaa8d} \textcolor[HTML]{000000}{4.53} & \cellcolor[HTML]{fee8de} \textcolor[HTML]{000000}{1.12} & \cellcolor[HTML]{fdd3c1} \textcolor[HTML]{000000}{2.57} \\
$o_{10}$ & \cellcolor[HTML]{fee8de} \textcolor[HTML]{000000}{1.14} & \cellcolor[HTML]{fdd4c2} \textcolor[HTML]{000000}{2.51} & \cellcolor[HTML]{ffebe2} \textcolor[HTML]{000000}{0.90} & \cellcolor[HTML]{ffece4} \textcolor[HTML]{000000}{0.77} & \cellcolor[HTML]{fff1ea} \textcolor[HTML]{000000}{0.37} & \cellcolor[HTML]{ffece4} \textcolor[HTML]{000000}{0.79} & \cellcolor[HTML]{fdd5c4} \textcolor[HTML]{000000}{2.44} & \cellcolor[HTML]{fee1d3} \textcolor[HTML]{000000}{1.83} & \cellcolor[HTML]{fee1d4} \textcolor[HTML]{000000}{1.81} & \cellcolor[HTML]{fed8c7} \textcolor[HTML]{000000}{2.34} \\
\end{tabular}
}

    \caption{Numerical values of overlaps; Cell $(i, j)$ contains the value of $\sqrt{p}|\langle o_i, \tilde{o}_j \rangle|$. Note that $\sqrt{p}\langle o_i, \tilde{o}_j \rangle \xrightarrow{p \to \infty} N(0, 1)$}
        \label{fig:speech-data-numericals}
    \end{subfigure}%
    \hfill
    \begin{subfigure}[t]{0.45\textwidth}
        \centering
    \includegraphics[width=\linewidth]{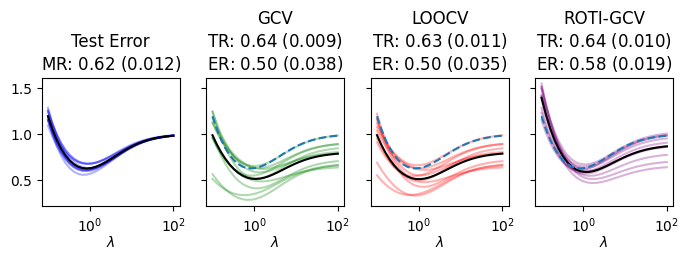}
        \caption{Tuning curves for Speech Data}
        \label{fig:speech-data-result-v2}
    \end{subfigure}
    \caption{Speech data setting, $n = p = 400$, $r^2 = \sigma^2 = 1$; Figure~\ref{fig:speech-data-overlaps} is essentially a histogram of values in~\ref{fig:speech-data-numericals}.}
    \label{fig:speech-data-big}
\end{figure}
We find a good use case for our coupled eigenvector condition on speech data taken from OpenML. This is a real dataset which is not included in the main manuscript.

\textbf{Overview of setting}: Dataset sourced is speech data from \cite{OpenMLSpeech}. $n = p = 400$, and $r^2 = \sigma^2 = 1$. 

\textbf{Checking assumptions}:
\begin{itemize}
    \item Regularity of Singular Value Distribution: Here, in Figure~\ref{fig:speech-data-spectrum-v2}, the spectrum has no outlier values.
    \item On coupling: Next, we observe that for each of the top 3 eigenvectors training set, they align strongly with the corresponding eigenvector of the test set, and none of the others. This is exactly the setting in which our coupled eigenvectors can be used. Furthermore, each of the top train eigenvectors is strongly aligned with only one of the test eigenvectors. In light of Table~\ref{fig:speech-data-numericals}, we choose to couple the first 3 eigenvectors with the respective test-set eigenvectors. This produces the result shown in Figure~\ref{fig:speech-data-result-v2}, showing that it successfully captures the required structure. This result is quite robust to the choice of $J_c$; using $6$ eigenvectors or even $10$ does not change the outcome much. 
\end{itemize}
\newpage 
\section{Poorly behaved case}
\begin{figure}
    \centering
    \begin{subfigure}[t]{0.45\textwidth}
        \centering
        \includegraphics[width=\textwidth]{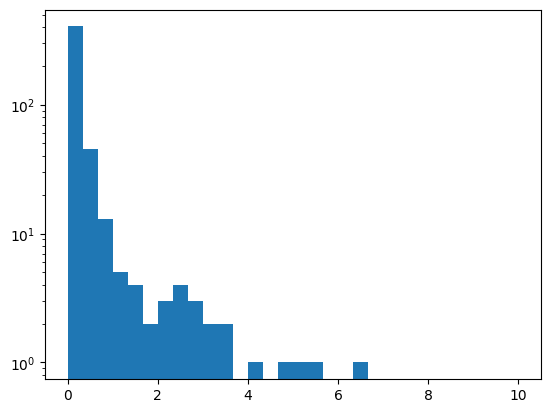}
        \caption{Histogram of singular values $(D_{ii})_{i = 1} ^n$}
        \label{fig:bad-spectrum}
    \end{subfigure}%
    \hfill
    \begin{subfigure}[t]{0.45\textwidth}
        \centering
        \includegraphics[width=\textwidth]{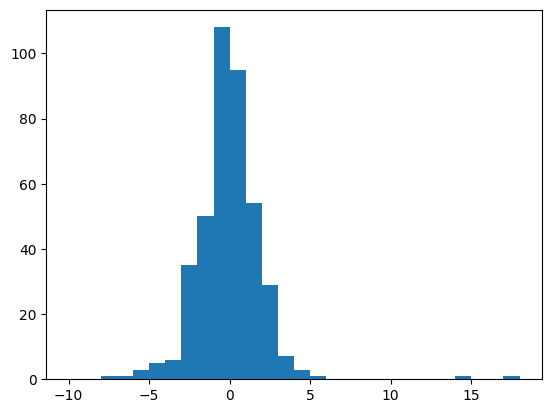}
        \caption{Histogram of overlaps $\{\sqrt{p}\langle \tilde{o}_i, o_j \rangle : 1 \leq i \leq j \leq 20 \}$}
        \label{fig:bad-overlaps}
    \end{subfigure}%
    \hfill
    \begin{subtable}[t]{0.45\textwidth}
        \centering
{\tiny
\setlength\tabcolsep{1.5pt}\renewcommand{\arraystretch}{1.5}
\sffamily
\begin{tabular}{p{1.5em}C{2.5em}C{2.5em}C{2.5em}C{2.5em}C{2.5em}C{2.5em}C{2.5em}C{2.5em}C{2.5em}C{2.5em}}
 & $\tilde{o}_{1}$ & $\tilde{o}_{2}$ & $\tilde{o}_{3}$ & $\tilde{o}_{4}$ & $\tilde{o}_{5}$ & $\tilde{o}_{6}$ & $\tilde{o}_{7}$ & $\tilde{o}_{8}$ & $\tilde{o}_{9}$ & $\tilde{o}_{10}$ \\
$o_{1}$ & \cellcolor[HTML]{67000d} \textcolor[HTML]{f1f1f1}{17.23} & \cellcolor[HTML]{fcbda4} \textcolor[HTML]{000000}{2.43} & \cellcolor[HTML]{fcb095} \textcolor[HTML]{000000}{2.84} & \cellcolor[HTML]{fee5d8} \textcolor[HTML]{000000}{0.98} & \cellcolor[HTML]{fcad90} \textcolor[HTML]{000000}{2.93} & \cellcolor[HTML]{ffece3} \textcolor[HTML]{000000}{0.55} & \cellcolor[HTML]{fff1ea} \textcolor[HTML]{000000}{0.27} & \cellcolor[HTML]{fee7dc} \textcolor[HTML]{000000}{0.84} & \cellcolor[HTML]{fc8161} \textcolor[HTML]{f1f1f1}{4.29} & \cellcolor[HTML]{fcb095} \textcolor[HTML]{000000}{2.83} \\
$o_{2}$ & \cellcolor[HTML]{fee9df} \textcolor[HTML]{000000}{0.72} & \cellcolor[HTML]{67000d} \textcolor[HTML]{f1f1f1}{14.25} & \cellcolor[HTML]{fee1d3} \textcolor[HTML]{000000}{1.22} & \cellcolor[HTML]{fff2eb} \textcolor[HTML]{000000}{0.23} & \cellcolor[HTML]{f5533b} \textcolor[HTML]{f1f1f1}{5.62} & \cellcolor[HTML]{fedecf} \textcolor[HTML]{000000}{1.36} & \cellcolor[HTML]{fcc3ab} \textcolor[HTML]{000000}{2.26} & \cellcolor[HTML]{fcbfa7} \textcolor[HTML]{000000}{2.36} & \cellcolor[HTML]{ffede5} \textcolor[HTML]{000000}{0.49} & \cellcolor[HTML]{ffede5} \textcolor[HTML]{000000}{0.50} \\
$o_{3}$ & \cellcolor[HTML]{ee3a2c} \textcolor[HTML]{f1f1f1}{6.28} & \cellcolor[HTML]{fcb89e} \textcolor[HTML]{000000}{2.60} & \cellcolor[HTML]{fc8565} \textcolor[HTML]{f1f1f1}{4.16} & \cellcolor[HTML]{fcbda4} \textcolor[HTML]{000000}{2.43} & \cellcolor[HTML]{fee5d9} \textcolor[HTML]{000000}{0.95} & \cellcolor[HTML]{fca588} \textcolor[HTML]{000000}{3.18} & \cellcolor[HTML]{fee8de} \textcolor[HTML]{000000}{0.77} & \cellcolor[HTML]{fb7d5d} \textcolor[HTML]{f1f1f1}{4.40} & \cellcolor[HTML]{fc9879} \textcolor[HTML]{000000}{3.59} & \cellcolor[HTML]{fee1d4} \textcolor[HTML]{000000}{1.18} \\
$o_{4}$ & \cellcolor[HTML]{fee7dc} \textcolor[HTML]{000000}{0.83} & \cellcolor[HTML]{fcc3ab} \textcolor[HTML]{000000}{2.23} & \cellcolor[HTML]{feeae1} \textcolor[HTML]{000000}{0.66} & \cellcolor[HTML]{fee4d8} \textcolor[HTML]{000000}{1.02} & \cellcolor[HTML]{fff2eb} \textcolor[HTML]{000000}{0.20} & \cellcolor[HTML]{fee3d6} \textcolor[HTML]{000000}{1.12} & \cellcolor[HTML]{fedccd} \textcolor[HTML]{000000}{1.39} & \cellcolor[HTML]{fcae92} \textcolor[HTML]{000000}{2.90} & \cellcolor[HTML]{fcbea5} \textcolor[HTML]{000000}{2.41} & \cellcolor[HTML]{ffefe8} \textcolor[HTML]{000000}{0.37} \\
$o_{5}$ & \cellcolor[HTML]{f34935} \textcolor[HTML]{f1f1f1}{5.87} & \cellcolor[HTML]{c8171c} \textcolor[HTML]{f1f1f1}{7.61} & \cellcolor[HTML]{fc8262} \textcolor[HTML]{f1f1f1}{4.22} & \cellcolor[HTML]{ffebe2} \textcolor[HTML]{000000}{0.59} & \cellcolor[HTML]{fc8a6a} \textcolor[HTML]{f1f1f1}{4.00} & \cellcolor[HTML]{ffeee6} \textcolor[HTML]{000000}{0.45} & \cellcolor[HTML]{fff4ee} \textcolor[HTML]{000000}{0.10} & \cellcolor[HTML]{fee1d3} \textcolor[HTML]{000000}{1.23} & \cellcolor[HTML]{fff2ec} \textcolor[HTML]{000000}{0.19} & \cellcolor[HTML]{ffece4} \textcolor[HTML]{000000}{0.54} \\
$o_{6}$ & \cellcolor[HTML]{fee8dd} \textcolor[HTML]{000000}{0.81} & \cellcolor[HTML]{fedfd0} \textcolor[HTML]{000000}{1.30} & \cellcolor[HTML]{ffece4} \textcolor[HTML]{000000}{0.53} & \cellcolor[HTML]{fcbda4} \textcolor[HTML]{000000}{2.43} & \cellcolor[HTML]{fcc2aa} \textcolor[HTML]{000000}{2.27} & \cellcolor[HTML]{ffede5} \textcolor[HTML]{000000}{0.47} & \cellcolor[HTML]{f6553c} \textcolor[HTML]{f1f1f1}{5.56} & \cellcolor[HTML]{fdd1be} \textcolor[HTML]{000000}{1.78} & \cellcolor[HTML]{fca183} \textcolor[HTML]{000000}{3.29} & \cellcolor[HTML]{fee9df} \textcolor[HTML]{000000}{0.72} \\
$o_{7}$ & \cellcolor[HTML]{fff4ee} \textcolor[HTML]{000000}{0.09} & \cellcolor[HTML]{fff2eb} \textcolor[HTML]{000000}{0.22} & \cellcolor[HTML]{fdc9b3} \textcolor[HTML]{000000}{2.07} & \cellcolor[HTML]{fcb398} \textcolor[HTML]{000000}{2.76} & \cellcolor[HTML]{ffefe8} \textcolor[HTML]{000000}{0.38} & \cellcolor[HTML]{fdd3c1} \textcolor[HTML]{000000}{1.69} & \cellcolor[HTML]{fc8262} \textcolor[HTML]{f1f1f1}{4.25} & \cellcolor[HTML]{fcc2aa} \textcolor[HTML]{000000}{2.30} & \cellcolor[HTML]{fdcebb} \textcolor[HTML]{000000}{1.87} & \cellcolor[HTML]{fee2d5} \textcolor[HTML]{000000}{1.14} \\
$o_{8}$ & \cellcolor[HTML]{fed9c9} \textcolor[HTML]{000000}{1.50} & \cellcolor[HTML]{ffeee6} \textcolor[HTML]{000000}{0.44} & \cellcolor[HTML]{fee6da} \textcolor[HTML]{000000}{0.91} & \cellcolor[HTML]{fc9777} \textcolor[HTML]{000000}{3.61} & \cellcolor[HTML]{fedecf} \textcolor[HTML]{000000}{1.33} & \cellcolor[HTML]{fb7656} \textcolor[HTML]{f1f1f1}{4.63} & \cellcolor[HTML]{fff5f0} \textcolor[HTML]{000000}{0.01} & \cellcolor[HTML]{fff0e8} \textcolor[HTML]{000000}{0.32} & \cellcolor[HTML]{fff0e9} \textcolor[HTML]{000000}{0.29} & \cellcolor[HTML]{ffeee6} \textcolor[HTML]{000000}{0.43} \\
$o_{9}$ & \cellcolor[HTML]{fcb79c} \textcolor[HTML]{000000}{2.62} & \cellcolor[HTML]{fedfd0} \textcolor[HTML]{000000}{1.31} & \cellcolor[HTML]{fff3ed} \textcolor[HTML]{000000}{0.14} & \cellcolor[HTML]{fcbea5} \textcolor[HTML]{000000}{2.42} & \cellcolor[HTML]{fdd4c2} \textcolor[HTML]{000000}{1.66} & \cellcolor[HTML]{fcbea5} \textcolor[HTML]{000000}{2.42} & \cellcolor[HTML]{fcb79c} \textcolor[HTML]{000000}{2.62} & \cellcolor[HTML]{fff4ee} \textcolor[HTML]{000000}{0.08} & \cellcolor[HTML]{fff2eb} \textcolor[HTML]{000000}{0.21} & \cellcolor[HTML]{ffefe8} \textcolor[HTML]{000000}{0.38} \\
$o_{10}$ & \cellcolor[HTML]{fee6da} \textcolor[HTML]{000000}{0.92} & \cellcolor[HTML]{fcb095} \textcolor[HTML]{000000}{2.82} & \cellcolor[HTML]{fdd2bf} \textcolor[HTML]{000000}{1.72} & \cellcolor[HTML]{fcbba1} \textcolor[HTML]{000000}{2.53} & \cellcolor[HTML]{fee5d8} \textcolor[HTML]{000000}{0.99} & \cellcolor[HTML]{fff2ec} \textcolor[HTML]{000000}{0.16} & \cellcolor[HTML]{feeae0} \textcolor[HTML]{000000}{0.70} & \cellcolor[HTML]{fee2d5} \textcolor[HTML]{000000}{1.14} & \cellcolor[HTML]{ffeee6} \textcolor[HTML]{000000}{0.46} & \cellcolor[HTML]{feeae0} \textcolor[HTML]{000000}{0.67} \\
\end{tabular}
}
    \caption{Numerical values of overlaps; Cell $(i, j)$ contains the value of $\sqrt{p}|\langle o_i, \tilde{o}_j \rangle|$. Note that $\sqrt{p}\langle o_i, \tilde{o}_j \rangle \xrightarrow{p \to \infty} N(0, 1)$}
        \label{fig:bad-numericals}
    \end{subtable}%
    \hfill
    \begin{subfigure}[t]{0.45\textwidth}
        \centering
        \includegraphics[width=\textwidth]{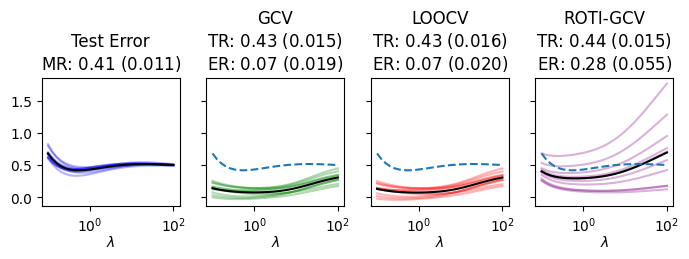}
        \caption{Tuning curves for unresidualized returns}
        \label{fig:bad-result}
    \end{subfigure}
    \caption{Poorly behaved setting, unresidualized returns, $n = p = 493$; Figure~\ref{fig:bad-overlaps} is essentially a histogram of values in~\ref{fig:bad-numericals}}
    \label{fig:bad}
\end{figure}

We will now look at a case that behaves poorly for our estimator. Here, we look at unresidualized returns. 

\textbf{Overview of Setting}: we consider data where each row contains 30 minute returns, but without residualization.

\textbf{Checking Assumptions}: 
\begin{itemize}
    \item Regularity of singular values: Here, in Figure~\ref{fig:bad-spectrum}, there are some larger singular values, but they are not extreme outliers.
    \item On coupling: We see that when checking the alignment of test and training eigenvectors in Figures~\ref{fig:bad-overlaps} and \ref{fig:bad-numericals}, the top eigenvector of the test set aligns heavily with mulitple eigenvectors of the training set. This is a scenario in which we cannot use the coupled eigenvector condition, since we can only couple one train eigenvector with one test eigenvector. As a result, we see in Figure~\ref{fig:bad-result} that our method does not do well in this scenario, which is expected. Note that this example illustrates that these diagnostics can be used ahead of time to understand whether or not our method should succeed. 
\end{itemize}
\newpage
\section{Speech data with both alignment and coupling}
\begin{figure}% [t]
    \centering
    \includegraphics[width=0.5\textwidth]{figures/good-output.png}
    \caption{Speech data with both alignment and coupling}
    \label{fig:final}
\end{figure}
\begin{table}
        \centering
        \caption{BHq adjusted $p$-value for alignment.}
        \begin{tabular}{lrr}
         &  $p$ \\
        $o_1$ &  0.000 \\
        $o_{2}$ &  0.000 \\
        $o_{3}$ &  0.000 \\
        $o_{4}$ &  0.000 \\
        $o_{5}$ &  0.000 \\
        $o_{6}$ &  0.934 \\
        $o_{7}$ &  0.588 \\
        $o_{8}$ &  0.651 \\
        $o_{9}$ &  0.913 \\
        $o_{10}$ & 0.395 \\
        \end{tabular}
        \label{table:final-v2}
\end{table}

As a final example that includes both alignment and coupling, we perform another experiment with speech data that additionally contains Signal-PC alignment.

\textbf{Overview of setting}: The dataset is again the speech data from \cite{OpenMLSpeech}. Recall that an aligned signal takes the form $\beta = \sum_{i \in J_c} ^n \sqrt{n} \alpha_i + \beta'$, where the $\alpha_i$ are the weights of the aligned components, and $\beta'$ is the unaligned component of the signal. We take $\alpha_i = 1/2$ for $i \in J_a = \{1, 2, 3, 4, 5\}$.

\textbf{Checking assumptions}:
\begin{itemize}
    \item Singular value histogram and coupling detection is equivalent to that of Section~\ref{sec:speech}.
    \item Alignment: Here we next identify alignment through the hypothesis testing framework of \cite{li2023spectrumaware}. We include in Table~\ref{table:final} the Benjamini-Hochberg adjusted $p$-values of alignment of each component, finding that $p$-values for each of $i = 1, 2, 3, 4, 5$ are extremely small ($<10^{-2}$), and those of the unaligned components are large $(>0.35)$, meaning we perfectly identify the aligned portion. Our method then performs well in this setting, as shown in Figure~\ref{fig:final}.
\end{itemize}
\end{document}